\newcommand{\nc}{\newcommand}
\nc{\nt}{\newtheorem}
\nc{\ip}[2]{\mbox{$\langle #1,#2 \rangle$}}
\nc{\pf}{\noindent{\bf Proof\ \ }}
\nc{\finpf}{\hfill{$\Box$}\linespace}
\nc{\linespace}{\vspace{\baselineskip} \noindent}
\nc{\R}{{\bf R}}
\nc{\cl}{\mbox{\rm cl}\,}
\nc{\cls}{ \mbox{{\scriptsize {\rm cl}}}\,}
\nc{\conv}{\mbox{\rm conv}}
\nc{\rb}{\mbox{\rm rb}\,}
\nc{\ri}{\mbox{\rm ri}\,}
\nc{\inter}{\mbox{\rm int}\,}
\nc{\kernel}{\mbox{\rm ker}\,}
\nc{\range}{\mbox{\rm range}\,}
\nc{\bd}{\mbox{\rm bd}\,}
\nc{\spann}{\mbox{\rm span}\,}
\nc{\rint}{\mbox{\rm rint}\,}
\nc{\epi}{\mbox{\rm epi}\,}
\nc{\gph}{\mbox{\rm gph}\,}
\nc{\rge}{\mbox{\rm rge}\,}
\nc{\rgel}{\mbox{\rm {\scriptsize rge}}\,}
\nc{\sepi}{\mbox{\rm {\scriptsize epi}}\,}
\nc{\sbd}{\mbox{\rm {\scriptsize bd}}\,}
\nc{\dom}{\mbox{\rm dom}\,}
\nc{\lin}{\mbox{\rm lin}\,}
\nc{\detr}{\mbox{\rm det}\,}
\nc{\para}{\mbox{\rm par}\,}
\nc{\aff}{\mbox{\rm aff}\,}
\nc{\crit}{\mbox{\rm crit}\,}
\nc{\cone}{\mbox{\rm cone}\,}
\nc{\dist}{\mbox{\rm dist}\,}
\nc{\diag}{\mbox{\rm Diag}\,}
\nc{\fix}{\mbox{\rm Fix}}
\nc{\rank}{\mbox{\rm rank}\,}
\newcommand{\lf}{\operatornamewithlimits{liminf}}
\newenvironment{myequation}{\begin{equation}}{\end{equation}}
\newenvironment{myeqnarray*}{\begin{eqnarray*}}{\end{eqnarray*}}
\nc{\bmye}{\begin{myequation}} \nc{\emye}{\end{myequation}}
\def\tto{\;{\lower 1pt \hbox{$\rightarrow$}}\kern -12pt
           \hbox{\raise 2.8pt \hbox{$\rightarrow$}}\;}
\begin{document}

\author{D. Drusvyatskiy\thanks{%
    Department of Mathematics, University of Washington, Seattle, WA 98195-4350; Department of Combinatorics and Optimization, University of Waterloo, Waterloo, Ontario, Canada N2L 3G1; {\tt http://www.math.washington.edu/{\raise.17ex\hbox{$\scriptstyle\sim$}}ddrusv}; Research supported by AFOSR.   
    }%
	\and
  A.D. Ioffe\thanks{%
  Department of Mathematics, Technion-Israel Institute of Technology, Haifa, Israel 32000;
  {\tt ioffe@math.technion.ac.il}.
  Research supported in part by US-Israel Binational Science Foundation Grant 2008261.
}
	\and
  A.S. Lewis\thanks{%
  School of Operations Research and Information Engineering,
  Cornell University,
  Ithaca, New York, USA;
  {\tt http://people.orie.cornell.edu/{\raise.17ex\hbox{$\scriptstyle\sim$}}aslewis/}.
  Research supported in part by National Science Foundation Grant DMS-0806057 and by the US-Israel Binational Scientific Foundation Grant 2008261.
}
}

\title{\Large Generic minimizing behavior in semi-algebraic optimization}

\date{}
\maketitle

\abstract{We present a theorem of Sard type for semi-algebraic set-valued mappings whose graphs have dimension no larger than that of their range space: the inverse of such a mapping admits a single-valued analytic localization around any pair in the graph, for a generic value parameter.
This simple result yields a transparent and unified treatment of generic properties of semi-algebraic optimization problems: ``typical'' semi-algebraic problems have finitely many critical points, around each of which they admit a unique ``active manifold'' (analogue of an active set in nonlinear optimization); moreover, such critical points satisfy strict complementarity and second-order sufficient conditions for optimality are indeed necessary. 
}

\section{Introduction}
Many problems of contemporary interest can broadly be phrased as an inverse problem: given a vector $\bar{y}$ in $\R^m$ find a point $\bar{x}$ satisfying the inclusion
$$\bar{y}\in F(\bar{x}),$$
where $F\colon\R^n\rightrightarrows\R^m$ is some set-valued mapping (a mapping taking elements of $\R^n$ to subsets of $\R^m$) arising from the problem at hand.
In other words, we would like to find a point $\bar{x}$ such that the pair $(\bar{x},\bar{y})$ lies in the graph $$\gph F:=\{(x,y): y\in F(x)\}.$$ 
Stability analysis of such problems then revolves around understanding sensitivity of the solution set $F^{-1}(\bar{y})$ near $\bar{x}$ to small perturbations in $\bar{y}$. 
An extremely desirable property is for $F$ to be {\em strongly regular} \cite[Section 3G]{imp} at a pair $(\bar{x},\bar{y})$ in $\gph F$, meaning that
the graph of the inverse $F^{-1}$ coincides locally around $(\bar{y},\bar{x})$ with the graph of a single-valued Lipschitz continuous function $g\colon \R^m\to\R^n$. 
Naturally, then vectors $\bar y$ for which there exists a solution $\bar{x}\in F^{-1}(\bar{y})$ so that $F$ is not strongly regular at $(\bar{x},\bar{y})$ are called {\em weak critical values} of $F$.
We begin this work by asking the following question of Sard type: 
\begin{center}
Which mappings $F\colon\R^n\rightrightarrows\R^m$ have ``almost no'' weak critical values?
\end{center}
Little thought shows an immediate obstruction: the size of the graph of $F$.  
Clearly if $\gph F\subset\R^n\times\R^m$ has dimension (in some appropriate sense) larger than $m$, then no such result is possible. Hence, at the very least, we should insist that $\gph F$ is in some sense small in the ambient space $\R^n\times\R^m$.

Luckily, set-valued mappings having small graphs are common in optimization and variational analysis literature. Monotone operators make up a fundamental example: a mapping $F\colon\R^n\rightrightarrows\R^n$ is {\em monotone} if the inequality $\langle x_1-x_2, y_1-y_2 \rangle \geq 0$ holds whenever the pairs $(x_i,y_i)$ lie in $\gph F$. Minty \cite{minty} famously showed that the graph of a maximal monotone mapping on $\R^n$ is Lipschitz homeomorphic to $\R^n$, and hence monotone graphs can be considered small for our purposes. This property, for example, is fundamentally used in \cite{Eq_mon,newt_mon}. The most important example of monotone mappings in optimization is the subdifferential $\partial f$ of a convex function $f$. 
More generally, we may consider set-valued mapping arising from variational inequalities:
$$x\mapsto g(x)+N_Q(x),$$
where $g$ is locally Lipschitz continuous and $N_Q$ is the normal cone to a closed convex subset $Q$ of $\R^n$. Such mappings appear naturally in perturbation theory for variational inequalities; see \cite{imp}. One can easily check that the graph of this mapping is locally Lipschitz homeomorphic to $\gph N_Q$, and is therefore small in our understanding. In particular, we may look at conic optimization problems of the form
$$\min_x \{f(x): G(x)\in K\},$$
for a smooth function $f\colon\R^n\to\R$, a smooth mapping $G\colon\R^n\to\R^m$, and a closed convex cone $K$ in $\R^m$. Standard first order optimality conditions (under an appropriate qualification condition) amount to the variational inequality
$$\begin{bmatrix}
0 \\
0
\end{bmatrix}\in\begin{bmatrix}
\nabla f(x)+\nabla G(x)^*\lambda \\
-G(x)
\end{bmatrix}+ N_{\{0\}^n\times K^*}(x,\lambda),
$$
where $K^*$ is the dual cone of $K$ and the vector $\lambda$ serves as a generalized Lagrange multiplier; see \cite{imp} for a discussion. Consequently the set-valued mapping on the right-hand-side again has a small graph.

In summary, set-valued mappings with small graphs appear often, and naturally so, in optimization problems. 
Somewhat surprisingly, assuming that the graph is small is by itself not enough to guarantee that strong regularity is typical ---  the conclusion that we seek. For instance, there exists a $C^1$-smooth convex function $g\colon\R\to\R$ so that every number on the real line is a weakly critical value of the subdifferential $\partial g$. Such a function is easy to construct. Indeed, let $f\colon\R\to\R$ be a surjective, continuous, and strictly increasing function whose derivative is zero almost everywhere (such a function $f$ is described in \cite{sing} for example). Observe that $f$ is nowhere locally Lipschitz continuous, since otherwise the fundamental theorem of calculus would imply that that $f$ is constant on some interval --- a contradiction.  On the other hand $f$ is the derivative of the function $h(t):=\int^t_{0} f(r) \,dr$. The Fenchel conjugate $h^*\colon\R\to\R$ is then exactly the function $g$ that we seek. This example is interesting in light of Mignot's theorem \cite[Theorem 9.65]{VA}, which guarantees that at almost every subgradient, the inverse of the convex subdifferential must be single-valued and differentiable, though as we see, not necessarily locally Lipschitz continuous. 

In light of this example, we see that even monotone variational inequalities can generically fail to be strongly regular. Incidentally, this explains the absence of Sard's theorem from all standard texts on variational inequalities (e.g. \cite{imp, fac_pang, fin_2}), thereby deviating from classical mathematical analysis literature where implicit function theorems go hand in hand with Sard's theorem. 

Motivated by optimization problems typically arising in practice, we consider {\em semi-algebraic} set-valued mappings --- those whose graphs can be written as a finite union of sets each defined by finitely many polynomial inequalities. See for example \cite{tame_opt} on the role of such mappings in nonsmooth optimization. In Theorem~\ref{thm:sard}, we observe that any semi-algebraic mapping  $F\colon\R^n\rightrightarrows\R^m$, whose graph has dimension no larger than $m$, has almost no weak critical values (in the sense of Lebesgue measure). Thus in the semi-algebraic setting, the size of the graph is the only obstruction to the Sard-type theorem that we seek. 

Despite its simplicity, both in the statement and the proof, Theorem~\ref{thm:sard} leads to a transparent and unified treatment of generic properties of semi-algebraic optimization problems, covering in particular polynomial optimization problems, semi-definite programming, and copositive optimization --- topics of contemporary interest. To illustrate, consider the family of optimization problems
$$\min_x f(x)+h(G(x)+y)-v^Tx,$$
where $f$ and $h$ are semi-algebraic functions on $\R^n$ and $\R^m$, respectively, and $G\colon\R^n\to\R^m$ is a $C^2$-smooth semi-algebraic mapping. Here the vectors $v,y$ serve as perturbation parameters. First order optimality conditions (under an appropriate qualification condition) then take the form of a generalized equation
$$\begin{bmatrix}
v \\
y
\end{bmatrix}\in\begin{bmatrix}
\nabla G(x)^*\lambda \\
-G(x)
\end{bmatrix}+ \Big(\partial f\times (\partial h)^{-1}\Big)(x,\lambda),
$$
where the subdifferentials $\partial f$ and $\partial h$ are meant in the limiting sense; see e.g. \cite{VA}.
Observe that the perturbation parameters $(v,y)$ appear in the range of the set-valued mapping on the right-hand-side. This set-valued mapping in turn, has a small graph. Indeed, the graphs of the subdifferential mappings $\partial f$  and $\partial h$ always have dimension exactly $n$ and $m$, respectively \cite[Theorem 3.7]{dim} (even locally around each of their points \cite[Theorem 3.8]{loc}, \cite[Theorem 5.13]{dir_lip}); monotonicity or convexity are irrelevant here. Thus the semi-algebraic Sard's theorem applies. In turn, appealing to some standard semi-algebraic techniques, we immediately conclude:
for almost all parameters $(v,y)\in\R^{n}\times \R^m$, the problem admits finitely many composite critical points with each one satisfying a strict complementarity condition, a basic qualification condition (generalizing that of Mangasarian-Fromovitz) holds, both $f$ and $h$ admit unique active manifolds in the sense of \cite{ident, Lewis-active}, and positivity of a second-derivative (of parabolic type) is both necessary and sufficient for second-order growth.

This development nicely unifies and complements a number of earlier results, such as the papers \cite{gen_spin,spin} on generic optimality conditions in nonlinear programming, the study of the complementarity problem \cite{saigal}, generic strict complementarity and nondegeneracy in semi-definite programming \cite{aliz, shap_non}, as well as the general study of strict complementarity in convex optimization  \cite{pat_tun,gen_nondeg}. In contrast, many of our arguments are entirely 
independent of the representation of the semi-algebraic optimization problem at hand. 
It is worth noting that convexity (and even Clarke regularity) is of no consequence for us. In particular, our results generalize and drastically simplify the main results of \cite{gen}, where convexity of the semi-algebraic optimization problem plays a key role. Though we state our results for semi-algebraic problems, they all generalize to the ``tame'' setting; see \cite{tame_opt} for the definitions. Key elements of the development we present here were first reported in \cite{adrian_ICM}.  In particular Theorem 7.3 in that work sketches the proof of generic minimizing behavior, restricted for simplicity to the case of linear optimization over closed semi-algebraic sets.

The outline of the manuscript is as follows. We begin in Section~\ref{sec:notation}, by recording some basic notation to be used throughout the manuscript. In Section~\ref{sec:Sard}, we recall some rudimentary elements of semi-algebraic geometry and prove the semi-algebraic Sard theorem for weak critical values. In Section~\ref{sec: semi_func_gen}, we establish various critical point properties of generic semi-algebraic functions, while in Section~\ref{sec:Lag}, we refine the analysis of the previous section for semi-algebraic functions in composite form.

\section{Basic notation}\label{sec:notation}
We begin by summarizing a few basic notions of variational and set-valued analysis. Unless otherwise stated, we follow the terminology and notation of \cite{VA,imp}. Throughout $\R^n$ will denote an $n$-dimensional Euclidean space with inner-product $\langle \cdot, \cdot\rangle$ and corresponding norm $|\cdot|$. We denote by $B_{\epsilon}(x)$ an open ball of radius $\epsilon$ around a point $x$ in $\R^n$.

A set-valued mapping $F$ from $\R^n$ to $\R^m$, denoted $F\colon\R^n\rightrightarrows\R^m$, is a mapping taking points in $\R^n$ to subsets of $\R^m$,
with the {\em domain} and {\em graph} of $F$ being
$$\dom F:=\{x\in\R^n: F(x)\neq \emptyset\},$$
$$\gph F:=\{(x,y)\in\R^n\times\R^m: y\in F(x)\}.$$
We say that $F$ is {\em finite-valued}, when the cardinality of the image $F(x)$ is finite for every $x\in\R^n$. 

A mapping $\hat{F}\colon\R^n\rightrightarrows\R^m$ is a {\em localization} of $F$ around $(\bar{x},\bar{y})\in\gph F$ if the graphs of $F$ and $\hat{F}$ coincide on a neighborhood of $(\bar{x},\bar{y})$. The following is the central notion we explore.

\begin{defn}[Strong regularity and weak critical points]
{\rm 
A set-valued mapping $F\colon\R^n\rightrightarrows\R^m$ 
is $C^p$-{\em strongly regular} at $(\bar{x},\bar{y})\in\gph F$ if the inverse $F^{-1}$ admits a $C^p$-smooth single-valued localization around  $(\bar{y},\bar{x})$. 

A vector $\bar{y}\in\R^m$ is a $C^p$-{\em weak critical value} of $F$ if there exists a point $\bar{x}$ in the preimage $F^{-1}(\bar{y})$, so that $F$ is not $C^p$-strongly regular at $(\bar{x},\bar{y})$.
}
\end{defn}

Observe that $\bar{y}$ being a weak critical value of $F$, at the very least, entails that the preimage $F^{-1}(\bar{y})$ is nonempty. It is instructive to comment on the terms ``strong'' and ``weak''. We use these to differentiate strong regularity from the weaker notion of metric regularity \cite{imp, ioffe_survey} and the corresponding criticality concept. Note that the term ``weakly critical'' (with no qualifier) refers to the real-analytic version of the definition.

A mapping $F\colon Q\to\widetilde{Q}$, where $\widetilde{Q}$ is a subset of $\R^m$, is ${C}^p$-{\em
smooth} if for each point $\bar{x}\in Q$, there is a neighborhood
$U$ of $\bar{x}$ and a ${C}^p$-smooth mapping $\widehat{F}\colon
\R^n\to\R^m$ that agrees with $F$ on $Q\cap U$. The symbol $C^{\omega}$ will always mean real analytic. Smooth manifolds will play an important role in our work; a nice reference is \cite{lee}.
\begin{defn}[Smooth manifolds]
{\rm
A subset $\mathcal{M}\subset\R^n$, is a $C^p$ {\em manifold of dimension} $r$ if for each point $\bar{x}\in \mathcal{M}$, there is an open neighborhood $U$ around $\bar{x}$ and a mapping $F$ from $\R^n$ to a $(n-r)$-dimensional Euclidean space so that $F$ is $C^p$-smooth with the derivative $\nabla F(\bar{x})$ having full rank and we have $$\mathcal{M}\cap U=\{x\in U: F(x)=0\}.$$ In this case, the {\em tangent space} to $\mathcal{M}$ at $\bar{x}$ is simply the set $T_{\mathcal{M}}(\bar{x}):=\ker \nabla F(\bar{x})$, while the {\em normal space} to $\mathcal{M}$ at $\bar{x}$ is defined by $N_{\mathcal{M}}(\bar{x}):=\range \nabla F(\bar{x})^*$.}
\end{defn}

Given a $C^1$-smooth manifold $\mathcal{M}$ and a mapping $F$ that is $C^1$-smooth on $\mathcal{M}$, we will say that $F$ has {\em constant rank on} $\mathcal{M}$ if the rank of the operator $\nabla \widehat{F}(x)$ restricted to $T_{\mathcal{M}}(x)$, with $\widehat{F}$ being any $C^1$-smooth mapping agreeing with $F$ on a neighborhood of $x$ in $\mathcal{M}$, is the same for all $x\in \mathcal{M}$.


\section{Semi-algebraic geometry and Sard's theorem}\label{sec:Sard}
Our current work is cast in the setting of semi-algebraic geometry. A {\em semi-algebraic} set $Q\subset\R^n$ is a finite union of sets of the form $$\{x\in \R^n: P_1(x)=0,\ldots,P_k(x)=0, R_1(x)<0,\ldots, R_l(x)<0\},$$ where $P_1,\ldots,P_k$ and $R_1,\ldots,R_l$ are polynomials in $n$ variables. In other words, $Q$ is a union of finitely many sets, each defined by finitely many polynomial equalities and inequalities. A map $F\colon\R^n\rightrightarrows\R^m$ is {\em semi-algebraic} if $\mbox{\rm gph}\, F\subset\R^{n+m}$ is a semi-algebraic set. 
For more details on semi-algebraic geometry, see for example \cite{Coste-semi,DM}.  An important feature of semi-algebraic sets is that they can be decomposed into analytic manifolds. Imposing a very weak condition on the way the manifolds fit together, we arrive at the following notion.

\begin{defn}[Stratification]
{\rm
A $C^p$-{\em stratification} of a semi-algebraic set $Q$ is a finite partition of $Q$ into disjoint semi-algebraic $C^p$ manifolds
$\{\mathcal{M}_i\}$ (called {\em strata}) with the property that for each index $i$, the intersection of the
closure of $\mathcal{M}_i$ with $Q$ is the union of some $\mathcal{M}_j$'s.}
\end{defn}

In particular, we can now define the dimension of any semi-algebraic set $Q$.
\begin{defn}[Dimension of semi-algebraic sets]{\hfill \\ }
{\rm
The {\em dimension} of a semi-algebraic set $Q\subset\R^n$ is the maximal dimension of a semi-algebraic $C^1$ manifold appearing in any $C^1$-stratification of $Q$.}
\end{defn}

It turns out that the dimension of a semi-algebraic set $Q$ does not depend on any particular stratification. 
It is often useful to refine stratifications. Consequently, the following notation becomes convenient.
\begin{defn}[Compatibility]
{\rm Given finite collections $\{B_i\}$ and $\{C_j\}$ of subsets of $\R^n$, we say that $\{B_i\}$ is {\em compatible} with $\{C_j\}$ if for all $B_i$ and $C_j$, either $B_i\cap C_j=\emptyset$ or $B_i\subset C_j$.}
\end{defn}

As we have alluded to at the onset, the following is a deep existence theorem for semi-algebraic stratifications \cite[Theorem 4.8]{DM}.
\begin{thm}[Stratifications exist]\label{thm:strat_exist}
Consider a semi-algebraic set $Q$ in $\R^n$ and a semi-algebraic map $F\colon Q\to \R^m$. Let $\mathcal{A}$ be a finite collection of semi-algebraic subsets of $Q$ and $\mathcal{B}$ a finite collection of semi-algebraic subsets of $\R^m$. Then there exists a $C^{\omega}$-stratification $\mathcal{A}'$ of $Q$ that is compatible with $\mathcal{A}$ and a $C^{\omega}$-stratification $\mathcal{B}'$ of $\R^m$ compatible with $\mathcal{B}$ such that for every stratum $\mathcal{M}\in\mathcal{A}'$, the restriction of $F$ to ${\mathcal{M}}$ is analytic and has constant rank, and the image
$F(\mathcal{M})$ is a stratum in $\mathcal{B}'$.
\end{thm}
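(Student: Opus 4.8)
Since this statement is quoted from \cite[Theorem 4.8]{DM}, what follows only sketches the route one would take. The sole substantial ingredient is the semi-algebraic $C^{\omega}$ \emph{cell decomposition theorem}: any finite family of semi-algebraic subsets of a Euclidean space admits a common refinement into finitely many semi-algebraic analytic manifolds (``cells''), a cell of dimension $d$ being an analytic graph over a $d$-dimensional cell in some coordinate subspace. Applied to the graph of a semi-algebraic map, this already yields that such a map is analytic on the pieces of a suitable stratification of its domain. Everything else is bookkeeping together with a dimension induction.

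First I would apply cell decomposition to $\gph F\subseteq\R^{n}\times\R^{m}$, made compatible with the finite family $\{A\times\R^{m}:A\in\mathcal{A}\}\cup\{\R^{n}\times B:B\in\mathcal{B}\}$, and project the resulting cells to $\R^{n}$; their images are semi-algebraic by the Tarski--Seidenberg theorem, and a further cell decomposition of these images produces a provisional analytic stratification $\mathcal{A}_{0}$ of $Q$, compatible with $\mathcal{A}$, on which $F$ restricts to an analytic map. Next I would install the constant-rank property: on a fixed analytic stratum $\mathcal{M}$ the function $x\mapsto\rank\big(\nabla\widehat{F}(x)|_{T_{\mathcal{M}}(x)}\big)$ is semi-algebraic and integer-valued, hence constant on a semi-algebraic subset of $\mathcal{M}$ whose complement has strictly smaller dimension; subdividing that complement by cell decomposition and iterating terminates because the bad locus drops in dimension at each step. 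On each stratum of the refined family the rank theorem then presents $F|_{\mathcal{M}}$ locally as a coordinate projection, so $F(\mathcal{M})$ is a semi-algebraic set that is locally an analytic submanifold of pure dimension equal to that rank.

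It remains to arrange that each image $F(\mathcal{M})$ is a single stratum of a stratification $\mathcal{B}'$ of $\R^{m}$ compatible with $\mathcal{B}$, and this forces a back-and-forth: one takes a cell decomposition $\mathcal{B}_{0}$ compatible with $\mathcal{B}$ and with the finite family $\{F(\mathcal{M})\}$, refines $\mathcal{A}_{0}$ by the preimages $F^{-1}(N)$ for $N\in\mathcal{B}_{0}$ so that every domain stratum maps into a single stratum of $\mathcal{B}_{0}$, re-establishes constant rank on the refined domain strata, stratifies the union of the new images into cells, and repeats. I expect the termination of this alternation to be the genuine obstacle. The way to control it is a simultaneous induction on $\dim Q$ in which the domain and range families are refined together: the strata of $Q$ of top dimension can be handled directly, since on each of them $F$ is, after the constant-rank reduction, an open map onto the analytic manifold $F(\mathcal{M})$, so these images may be taken as strata of $\mathcal{B}'$ with no further subdivision of the corresponding domain strata; the union of the remaining strata of $Q$ is a semi-algebraic set of dimension $<\dim Q$ to which the inductive hypothesis applies, together with all the finite families accumulated so far, and compatibility is used to splice the two.

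Finally I would enforce the frontier condition $(\cl\mathcal{M}_{i})\cap Q=\bigcup_{j}\mathcal{M}_{j}$ by one further common refinement --- the semi-algebraic analogue of the existence of Whitney stratifications, again proved by a dimension induction based on cell decomposition. This condition, once achieved, is preserved by the subdivisions above, since each is taken along semi-algebraic sets already made compatible with the current strata, so that no stratum acquires a piece of frontier in $Q$ that fails to be a union of strata. Assembling all of these refinements yields the asserted $\mathcal{A}'$ and $\mathcal{B}'$.
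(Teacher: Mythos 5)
You should first note that the paper does not prove this statement at all: it is imported wholesale as \cite[Theorem 4.8]{DM}, so there is no in-paper argument to compare yours against. What can be assessed is the internal soundness of your sketch. Its overall route --- cell decomposition, generic analyticity on cells, refinement to constant rank, and a dimension induction to make images into strata --- is indeed the standard one, but two of the steps you lean on would fail as written.

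First, from constant rank you conclude that $F(\mathcal{M})$ ``is a semi-algebraic set that is locally an analytic submanifold of pure dimension equal to that rank.'' That is false: constant-rank images can self-intersect. For instance $t\mapsto (t^2-1,\,t^3-t)$ is polynomial with derivative of rank $1$ everywhere, yet its image (the nodal cubic) is not locally a manifold at the origin; so the images themselves must be partitioned further, which feeds back into the domain and is precisely the circularity you then try to break. Second, your termination argument --- that top-dimensional strata ``can be handled directly,'' their images being taken as strata of $\mathcal{B}'$ ``with no further subdivision of the corresponding domain strata'' --- does not work as stated: the images of distinct top-dimensional strata, and the sets of $\mathcal{B}$, can overlap partially, so $\mathcal{B}'$ must refine them, and compatibility then forces refining those domain strata by preimages after all. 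The mechanism that actually makes the back-and-forth terminate is different: one inducts on the rank (equivalently, on the dimension of the image), using that once $F|_{\mathcal{M}}$ is an analytic submersion onto an open subset of a target stratum, every further refinement of the target pulls back through the submersion to a refinement of $\mathcal{M}$ by analytic submanifolds on which the submersion-onto-a-stratum property (hence constant rank) persists, while the residual pieces, mapping into target cells of strictly smaller dimension, have strictly smaller rank and are absorbed by the induction hypothesis. Finally, the frontier condition is not automatically ``preserved by the subdivisions above''; it has to be re-imposed at the end by a refinement compatible with all sets accumulated so far, which is harmless only because that last refinement no longer touches the map. With these corrections your outline becomes essentially the argument of \cite{DM}, but as it stands the key closure step is the part that is missing.
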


Classically a set $U\subset\R^n$ is said to be ``generic'', if it is large in some precise mathematical sense, depending on context. Two popular choices are that of $U$ being full-measure, meaning its complement has Lebesgue measure zero, and that of $U$ being topologically generic, meaning it contains a countable intersection of dense open sets. In general, these notions are very different. However for semi-algebraic sets, the situation simplifies drastically. Indeed, if $U\subset\R^n$ is a semi-algebraic set, then the following are equivalent.
\begin{itemize}
\item $U$ is dense.
\item $U$ is full-measure. 
\item $U$ is topologically generic. 
\item The dimension of $U^c$ is strictly smaller than $n$.
\end{itemize}
Complements of such sets are said to be {\em negligible}.



The following is the basic tool that we will use. A semi-algebraic finite-valued mapping $F\colon\R^n\rightrightarrows\R^m$ can be decomposed into finitely many $C^{\omega}$-smooth single-valued selections that ``cross'' almost nowhere. This result is standard: it readily follows for example from \cite[Corollary~2.27]{dim}. We provide a proof sketch for completeness.
\begin{thm}[Selections of finite-valued semi-algebraic mappings]\label{thm:selfin}
Consider a finite-valued semi-algebraic mapping $G\colon\R^n\rightrightarrows\R^m$. 
Then there exists an integer $N$, a finite collection of open semi-algebraic sets $\{U_i\}^N_{i=0}$ in $\R^n$, and analytic semi-algebraic single-valued mappings  $$G_i^j\colon U_i\to\R^m \qquad\textrm{ for } i=0,\ldots,N \textrm{ and } j=1,\ldots, i$$ satisfying:
\begin{enumerate}
\item $\bigcup_i U_i$ is dense in $\R^n$.
\item For any $x\in U_i$, the image $G(x)$ has cardinality $i$.
\item We have the representation
$$G(x)  =  \{G_i^j(x) : j=1,2,\ldots,i\} \qquad\textrm{ whenever }x\in U_i.$$
\end{enumerate}
\end{thm}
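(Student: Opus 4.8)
The plan is to extract the selections one "cardinality level" at a time by induction on the maximal fiber cardinality, using the stratification theorem as the main engine. First I would consider the graph $\gph G \subset \R^n \times \R^m$, which is semi-algebraic, and apply Theorem~\ref{thm:strat_exist} to the canonical projection $\pi\colon \gph G \to \R^n$; this yields a $C^\omega$-stratification $\{\mathcal{M}_k\}$ of $\gph G$ and a compatible stratification of $\R^n$ so that on each stratum $\mathcal{M}_k$ the restriction of $\pi$ is analytic of constant rank and $\pi(\mathcal{M}_k)$ is a stratum in $\R^n$. The key observation is that $\pi$ restricted to $\gph G$ has finite fibers, so a constant-rank analytic map with finite fibers on a manifold $\mathcal{M}_k$ must be an immersion, and in fact (after further refining so that $\pi(\mathcal{M}_k)$ is an open subset of $\R^n$ whenever $\dim \mathcal{M}_k = n$) a local analytic diffeomorphism onto its image; the strata of dimension strictly less than $n$ will be swept into the negligible complement. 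Thus over each full-dimensional stratum image, $G$ restricted appropriately is locally a finite union of analytic single-valued branches.

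Next I would organize these branches by counting. Let $V$ be the union of the $n$-dimensional strata images in $\R^n$; by the dimension facts recalled before the theorem, $\R^n \setminus V$ is negligible, so $V$ is dense. On $V$ the fiber cardinality of $G$ is a semi-algebraic, locally constant integer-valued function (locally constant because each point has a neighborhood over which the finitely many analytic branches from the full-dimensional strata do not collide, after shrinking); hence $V$ decomposes into open semi-algebraic pieces $U_i$ on which $|G(x)| = i$, giving property (2). On each connected component of $U_i$ one has exactly $i$ analytic single-valued selections $G_i^1,\dots,G_i^i$ whose union is $G(x)$, which is property (3); by relabeling and taking $N$ to be the largest cardinality attained, and noting there are finitely many components since everything is semi-algebraic, we obtain the stated finite collection $\{U_i\}_{i=0}^N$ and the maps $G_i^j\colon U_i \to \R^m$. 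Density of $\bigcup_i U_i$ in property (1) is just density of $V$.

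The main obstacle I anticipate is the bookkeeping needed to guarantee that the selections are genuinely \emph{single-valued and analytic on all of $U_i$}, not merely locally: a priori the branches coming from different full-dimensional strata could be permuted as one moves around in $U_i$ (a monodromy issue), so a given $U_i$ may need to be further subdivided into smaller open semi-algebraic sets on which a consistent global labelling of the $i$ branches exists. This is handled by invoking the semi-algebraic triviality / cell-decomposition machinery — or simply by appealing directly to \cite[Corollary~2.27]{dim} as the excerpt suggests — to cut $\R^n$ into finitely many semi-algebraic pieces over each of which $\gph G$ is trivialized, so that the $i$ sheets are globally distinguishable. A secondary, more routine point is verifying that discarding the lower-dimensional strata of $\gph G$ (those mapping into the negligible set, as well as strata of $\gph G$ whose dimension is $< n$) does not remove any point of some $U_i$; this follows because at a point $x$ lying in a full-dimensional stratum image, the entire fiber $G(x)$ is already accounted for by full-dimensional strata of $\gph G$, since a point of $G(x)$ lying only in a low-dimensional stratum of $\gph G$ would force $x$ into the projection of that low-dimensional stratum, a negligible set excluded from $V$ after one final compatible refinement. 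Once these points are in place, the three enumerated conclusions follow immediately, and the proof sketch is complete.
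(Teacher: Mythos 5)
Your overall scaffolding (stratify $\gph G$ with respect to the projection, keep the full-dimensional strata, then repair global labelling via semi-algebraic triviality / \cite[Corollary~2.27]{dim}) could be made to work, and the final triviality step is exactly the tool the paper leans on. But there is a genuine gap at the step where you claim that on $V$ the fiber cardinality of $G$ is \emph{locally constant}, so that the level sets $U_i\subset V$ are open. Your justification --- ``each point has a neighborhood over which the finitely many analytic branches from the full-dimensional strata do not collide'' --- only rules out one failure mode. Since every fiber point over $x_0\in V$ lies in a full-dimensional stratum on which $\pi$ is a local diffeomorphism, you do get a local branch through each such point, hence the count is \emph{lower} semicontinuous on $V$; but it need not be upper semicontinuous: branches over nearby points can simply terminate at $x_0$, because the corresponding fiber points converge to a frontier point of a stratum that does not belong to $\gph G$ (the graph is not assumed closed), or escape to infinity. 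Concretely, take $\gph G=\big((0,2)\times\{0\}\big)\cup\big((0,1)\times\{1\}\big)\subset\R\times\R$. This whole set is a single analytic semi-algebraic $1$-manifold on which the projection has constant rank $1$, and Theorem~\ref{thm:strat_exist} may legitimately return it as one stratum (with $(0,2)$ a stratum of $\R$); then $V=(0,2)$, there are no lower-dimensional strata to excise, yet the cardinality jumps from $2$ to $1$ at $x=1\in V$, so your $U_1=[1,2)$ is not open and no branch structure is locally constant there. Whether your construction survives depends on which stratification the theorem hands you, which your argument does not control. The paper avoids this entirely by defining $U_i:=\{x:\,|G(x)|=i\}$ \emph{first} (a semi-algebraic set), replacing it by an open subset with $\bigcup_iU_i$ dense (each $U_i$ minus its top-dimensional open part has dimension $<n$), and only then invoking \cite[Corollary~2.27]{dim} over a dense open subset of each $U_i$; that is the patch your argument needs.

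Two smaller points. First, your density claim for $V$ is wrong as stated when $\dom G$ is not dense in $\R^n$: the images of strata of $\gph G$ lie in $\dom G$, so $\R^n\setminus V$ need not be negligible; you must separately include the cardinality-zero region (the theorem's $U_0$), e.g.\ the open strata of $\R^n$ disjoint from $\dom G$. Second, on the monodromy issue you correctly identify the problem and the remedy, but note that the theorem asks for a single open set $U_i$ per cardinality with the maps $G_i^j$ defined on all of $U_i$; ``further subdividing'' $U_i$ must therefore be implemented as \emph{shrinking} $U_i$ to a dense open subset over which the trivialization of \cite[Corollary~2.27]{dim} holds (density of $\bigcup_iU_i$ is all the statement requires), which is precisely how the paper proceeds, followed by one more shrinking to make the selections analytic.
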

\begin{proof}
Since $G$ is semi-algebraic, there exists an integer $N$ with the property that the cardinality of the images $G(x)$ is no greater than $N$ \cite[Theorem 4.4]{DM}.
For $i=0,\ldots, k$, define $U_i$ to be the set of points $x\in\R^n$ so that that image $G(x)$ has cardinality precisely equal to $i$. A standard argument shows that the sets $U_i$ are semi-algebraic. Stratifying, we replace each $U_i$ with an open set (possibly empty) so that the union of $U_i$ is dense in $\R^n$. 

Fix now an index $i$. By \cite[Corollary 2.27]{dim}, there exists a dense open subset $X_i$ of $U_i$ with the property that there exists a semi-algebraic set $Y_i\subset \R^m$ and a semi-algebraic homeomorphism $\theta_i\colon\gph G\big|_{X_i}\to X_i\times Y_i$ satisfying
$$\theta_i(\{x\}\times G(x))=\{x\}\times Y_i \quad \textrm{ for all }x\in X_i.$$ 
Observe that for each $i$ the set $Y_i$ has cardinality $i$. Enumerate the elements of $Y_i$ by labeling $Y_i=\{y_1,\ldots,y_i\}$.
Define $\pi$ to be the projection $\pi(x,y)=y$ and for each $j=1,\ldots,i$ set $$G^j_{i}(x)=\pi\circ\theta^{-1}_i(x,y_j)\quad \textrm{ for } x\in X_i.$$
Stratifying $X_i$, we may replace $U_i$ by an open dense subset on which all the mappings $G^j_{i}$ are analytic. The result follows.
\end{proof}

In particular, this theorem is applicable for semi-algebraic mappings with ``small'' graphs, since such mappings are finite-valued almost everywhere \cite[Proposition~4.3]{dim}.
\begin{thm}[Finite selections for mappings with small graphs]\label{thm:sel}
Suppose that the graph of a semi-algebraic set-valued mapping $F\colon\R^n\rightrightarrows\R^m$ has dimension no larger than $m$. 
Then the inverse mapping $F^{-1}\colon\R^m\rightrightarrows\R^n$ is finite-valued almost everywhere. 
\end{thm}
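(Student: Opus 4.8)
The plan is to reduce everything to a dimension count. The statement to prove is: if $\gph F \subset \R^n \times \R^m$ has dimension at most $m$, then $F^{-1}(\bar y)$ is finite for almost every $\bar y \in \R^m$. The natural object to examine is the projection $\pi \colon \gph F \to \R^m$ given by $\pi(x,y) = y$, since the fibers of this projection are exactly the sets $\{\bar y\} \times F^{-1}(\bar y)$. So the claim becomes: the projection of a semi-algebraic set of dimension $\le m$ onto $\R^m$ has finite fibers over a negligible-complement set of parameters.

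First I would apply Theorem~\ref{thm:strat_exist} to the semi-algebraic map $\pi$ restricted to $\gph F$: this yields a $C^\omega$-stratification $\{\mathcal{M}_i\}$ of $\gph F$ and a compatible stratification of $\R^m$ such that on each stratum $\mathcal{M}_i$, the restriction $\pi|_{\mathcal{M}_i}$ has constant rank and maps $\mathcal{M}_i$ onto a stratum $\mathcal{N}_i$ of $\R^m$. Since $\dim \gph F \le m$, every stratum $\mathcal{M}_i$ has dimension $\le m$. Now partition the strata into two groups: those with $\dim \mathcal{M}_i < m$, and those (if any) with $\dim \mathcal{M}_i = m$. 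For a stratum with $\dim \mathcal{M}_i = m$ on which $\pi|_{\mathcal{M}_i}$ has constant rank, either the rank is $m$ — in which case $\pi|_{\mathcal{M}_i}$ is a local diffeomorphism onto the open stratum $\mathcal{N}_i \subset \R^m$, and by a standard semi-algebraic argument (a finite-to-one property, e.g.\ from the fact that semi-algebraic sets have finitely many connected components) the fibers $\pi^{-1}(\bar y) \cap \mathcal{M}_i$ are finite for $\bar y \in \mathcal{N}_i$ — or the rank is $< m$, in which case $\dim \mathcal{N}_i = \rank < m$, so $\mathcal{N}_i$ is itself a negligible subset of $\R^m$ and we simply discard those $\bar y$. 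For a stratum with $\dim \mathcal{M}_i < m$, the image $\mathcal{N}_i = \pi(\mathcal{M}_i)$ has dimension $\le \dim \mathcal{M}_i < m$ (images under semi-algebraic maps do not increase dimension), so again $\mathcal{N}_i$ is negligible and can be discarded.

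Putting this together: let $D$ be the union of all strata $\mathcal{N}_i$ in $\R^m$ that are the image either of a low-dimensional stratum $\mathcal{M}_i$ or of an $m$-dimensional stratum on which $\pi$ has rank $< m$. Each such $\mathcal{N}_i$ has dimension strictly less than $m$, and there are finitely many of them, so $D$ is a semi-algebraic set of dimension $< m$, hence negligible. For $\bar y \notin D$, every stratum $\mathcal{M}_i$ whose image contains $\bar y$ is an $m$-dimensional stratum on which $\pi$ has full rank $m$, contributing only finitely many points to $\pi^{-1}(\bar y)$; since there are finitely many strata in total, $F^{-1}(\bar y) = \pi^{-1}(\bar y)$ is finite. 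This proves the theorem.

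The main obstacle — really the one point requiring care rather than difficulty — is justifying that a constant-rank-$m$ analytic semi-algebraic map from an $m$-manifold onto an open subset of $\R^m$ has finite fibers. The cleanest route is to invoke a basic structural fact about semi-algebraic sets: a semi-algebraic set with all fibers of a semi-algebraic map being $0$-dimensional must have finite fibers (a $0$-dimensional semi-algebraic set is finite, being a finite union of points and open intervals, the latter excluded by $0$-dimensionality). Since full rank $m$ forces each fiber $\pi^{-1}(\bar y) \cap \mathcal{M}_i$ to be a $0$-dimensional submanifold, this applies directly. One should also double-check the elementary fact that $\dim \pi(\mathcal{M}_i) \le \dim \mathcal{M}_i$ for a semi-algebraic map, which is standard (see \cite{Coste-semi,DM}), and that the decomposition respects the almost-everywhere finiteness claimed, but neither presents genuine difficulty.
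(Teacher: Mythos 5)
Your argument is correct, but it is worth noting that the paper does not actually prove this statement: Theorem~\ref{thm:sel} is simply quoted from \cite[Proposition~4.3]{dim}, immediately after the remark that mappings with small graphs are finite-valued almost everywhere. What you have written is therefore a self-contained re-proof of that cited fact rather than a parallel of an argument in the text. Your route --- stratify the projection $\pi(x,y)=y$ restricted to $\gph F$ via Theorem~\ref{thm:strat_exist}, discard the images of strata of dimension $<m$ and of $m$-dimensional strata on which $\pi$ has rank $<m$ (a negligible semi-algebraic set $D$), and observe that over $y\notin D$ each remaining fiber is a discrete semi-algebraic set, hence finite --- is sound. Two small points of hygiene: the justification that a $0$-dimensional semi-algebraic set is finite should be ``finitely many connected components'' (your ``points and open intervals'' description is the one-dimensional cell picture, for subsets of $\R$); and the assertion $\dim\mathcal{N}_i=\mathrm{rank}<m$ is more than you need and not literally what the constant rank theorem gives globally --- it suffices to note that an analytic map of everywhere rank $<m$ from an $m$-manifold cannot have an open image (classical Sard), or to invoke the standard semi-algebraic bound $\dim \pi(\mathcal{M}_i)\le m-1$. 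For comparison, the usual proof of the cited proposition goes through the fiber-dimension theorem for semi-algebraic maps (the set $A_d$ of values whose fiber has dimension $d$ satisfies $\dim \pi^{-1}(A_d)=\dim A_d+d$), which yields the statement in one line: $\dim A_d\le m-d<m$ for $d\ge 1$. Your stratification argument is longer but uses only tools already stated in the paper, which is a reasonable trade-off.
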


We now arrive at the semi-algebraic Sard theorem --- the main result of this section. 
\begin{thm}[Semi-algebraic Sard theorem for weakly critical values]\label{thm:sard}
Consider a semi-algebraic set-valued mapping $F\colon\R^n\rightrightarrows\R^m$ satisfying $\dim \gph F\leq m$. Then the collection of weakly critical values of $F$ is a negligible semi-algebraic set. More precisely, 
there exists an integer $N$, a finite collection of open semi-algebraic sets $\{U_i\}^N_{i=0}$ in $\R^n$, and analytic semi-algebraic single-valued mappings  $$G_i^j\colon U_i\to\R^n \qquad\textrm{ for } i=0,\ldots,N \textrm{ and } j=1,\ldots, i$$ satisfying:
\begin{enumerate}
\item $\bigcup_i U_i$ is dense in $\R^m$.
\item For any $x\in U_i$, the preimage $F^{-1}(x)$ has cardinality $i$.
\item We have the representation
$$F^{-1}(x)  =  \{G_i^j(x) : j=1,2,\ldots,i\} \qquad\textrm{ whenever }x\in U_i.$$
\end{enumerate}
\end{thm}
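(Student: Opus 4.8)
\quad The plan is to derive the ``more precise'' conclusion (1)--(3) almost directly from Theorems~\ref{thm:sel} and~\ref{thm:selfin}, and then to read off from that structure the negligibility of the weakly critical values. First I would invoke Theorem~\ref{thm:sel} to obtain a semi-algebraic set $D\subseteq\R^m$ of full measure on which $F^{-1}$ is finite-valued. Since $\R^m\setminus D$ is then negligible, hence of dimension strictly less than $m$, so is its closure; therefore $D^{\circ}:=\R^m\setminus\cl(\R^m\setminus D)$ is an \emph{open} dense semi-algebraic subset of $D$. The restriction $F^{-1}|_{D^{\circ}}\colon\R^m\rightrightarrows\R^n$, whose graph is $\gph F^{-1}\cap(D^{\circ}\times\R^n)$, is a finite-valued semi-algebraic mapping, so Theorem~\ref{thm:selfin} applies to it and produces an integer $N$, open semi-algebraic sets $\{U_i\}_{i=0}^{N}$ with dense union, and analytic semi-algebraic single-valued maps $G_i^{j}\colon U_i\to\R^n$ as in that theorem. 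Replacing each $U_i$ by $U_i\cap D^{\circ}$ --- which preserves openness, the density of $\bigcup_i U_i$, and analyticity of the restricted $G_i^{j}$ --- we may assume $U_i\subseteq D^{\circ}$; then $F^{-1}|_{D^{\circ}}$ agrees with $F^{-1}$ on every $U_i$, and properties (1)--(3) follow.

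It then remains to show that no point of the open dense semi-algebraic set $U:=\bigcup_i U_i$ is a weakly critical value; granting this, the weakly critical values form a subset of the negligible semi-algebraic set $\R^m\setminus U$, and a routine stratification argument shows this subset to be semi-algebraic, whence negligible. So fix $\bar y\in U_i$ and an arbitrary $\bar x\in F^{-1}(\bar y)$. By (3), $\bar x=G_i^{j}(\bar y)$ for some $j$, and the $i$ values $G_i^{1}(\bar y),\dots,G_i^{i}(\bar y)$ are pairwise distinct. Choosing $\delta>0$ so that the balls $B_{\delta}(G_i^{k}(\bar y))$, $k=1,\dots,i$, are pairwise disjoint, and then, using continuity of the $G_i^{k}$, an open neighborhood $V\subseteq U_i$ of $\bar y$ with $G_i^{k}(x)\in B_{\delta}(G_i^{k}(\bar y))$ for all $x\in V$ and all $k$, one checks that
$$\gph F^{-1}\cap\bigl(V\times B_{\delta}(\bar x)\bigr)=\{(x,G_i^{j}(x)):x\in V\}.$$
Hence $G_i^{j}|_{V}$ is an analytic single-valued localization of $F^{-1}$ around $(\bar y,\bar x)$, i.e.\ $F$ is $C^{\omega}$-strongly regular at $(\bar x,\bar y)$; as $\bar x\in F^{-1}(\bar y)$ was arbitrary, $\bar y$ is not weakly critical.

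I expect the only genuinely new step --- and the one needing care --- to be this last passage from ``finitely many analytic selections of $F^{-1}$'' to ``$C^{\omega}$-strong regularity'': a localization must reproduce \emph{all} of $\gph F^{-1}$ in a full neighborhood of $(\bar y,\bar x)$, not just the selected branch, which is exactly why it is essential that the $U_i$ are open and that the branch values $G_i^{k}(\bar y)$ are distinct, so that shrinking to $V\times B_{\delta}(\bar x)$ isolates a single branch. Everything else is bookkeeping: the discrepancy between ``finite-valued almost everywhere'' (Theorem~\ref{thm:sel}) and the ``finite-valued'' hypothesis of Theorem~\ref{thm:selfin} is bridged by passing to the open set $D^{\circ}$, while the substantive geometric content --- that a small graph forces $F^{-1}$ to split into finitely many analytic pieces off a negligible set --- is already packaged inside the cited results, and ultimately in the dimension estimates of \cite{dim}.
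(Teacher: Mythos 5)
Your proposal is correct and follows essentially the same route as the paper: combine Theorem~\ref{thm:sel} (finite-valuedness of $F^{-1}$ almost everywhere) with Theorem~\ref{thm:selfin} to get the analytic branches $G_i^j$ on open sets with dense union, then use distinctness of the branch values and continuity to isolate a single branch and conclude $C^{\omega}$-strong regularity off a negligible set. Your extra bookkeeping (passing to the open dense set $D^{\circ}$ to reconcile ``finite-valued almost everywhere'' with the finite-valued hypothesis, and the explicit $V\times B_{\delta}(\bar x)$ localization) only fills in details the paper's terse proof leaves implicit.
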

\begin{proof}
Consider the open semi-algebraic sets $\{U_i\}^N_{i=0}$ along with the single-valued, analytic, semi-algebraic mappings  $G_i^j\colon U_i\to\R^n$ provided by Theorems~\ref{thm:selfin} and ~\ref{thm:sel}. Since for any $y\in U_i$, the preimage $F^{-1}(y)$ has cardinality $i$ and we have $F^{-1}(y)  =  \{G_i^j(y) : j=1,2,\ldots,i\}$, we deduce that the values $G_i^j(y)$ for $j=1,\ldots,i$ are all distinct. Since $G_i^j$ are in particular continuous, 
we deduce that the mapping $F^{-1}$  has a single-valued analytic localization around $(y,x)$ for every point $x\in F^{-1}(y)$.
The result follows.
\end{proof}

We note that a Sard type theorem for semi-algebraic set-valued mapping with possibly large graphs, where criticality means absence of ``metric regularity'' \cite{imp, ioffe_survey}, was proved in \cite{IS}. Since we will not use this concept in the current work, we omit the details. 

\section{Critical points of generic semi-algebraic functions}\label{sec: semi_func_gen}
In this section, we derive properties of critical points (appropriately defined) of semi-algebraic functions under generic linear perturbations. Throughout, we will consider functions $f$ on $\R^n$ taking values in the extended-real-line $\overline{\R}=\R\cup\{+\infty\}$. We will always assume that such functions are proper, meaning they are not identically equal to $+\infty$. The {\em domain} and {\em epigraph} of $f$ are 
\begin{align*}
\dom f &:= \{x\in\R^n: f(x) <+\infty\},\\
\epi f &:= \{(x,r)\in\R^n\times\R: r \geq f(x)\}.
\end{align*}
The {\em indicator function} of a set $Q\subset \R^n$, denoted $\delta_Q$, is defined to be zero on $Q$ and $+\infty$ off it. 
A function $f\colon\R^n\to\overline{\R}$ is {\em lower-semicontinuous} ({\em lsc}) whenever the epigraph $\epi f$ is closed. 
The notion of criticality we consider arises from the workhorse of variation analysis, the subdifferential. 
\begin{defn}[Subdifferentials and critical points]
{\rm Consider a function $f\colon\R^n\to\overline{\R}$ and a point $\bar{x}$ with $f(\bar{x})$ finite. 
\begin{enumerate}
\item The {\em proximal subdifferential} of $f$ at $\bar{x}$, denoted 
$\partial_p f(\bar{x})$, consists of all vectors $v \in \R^n$ satisfying $$f(x)\geq f(\bar{x})+\langle v,x-\bar{x} \rangle +O(|x-\bar{x}|^2).$$ 
\item The {\em limiting subdifferential} of $f$ at $\bar{x}$, denoted $\partial f(\bar{x})$, consists of all vectors $v\in\R^n$ for which there exist sequences $x_i\in\R^n$ and $v_i\in\partial_p  f(x_i)$ with $(x_i,f(x_i),v_i)$ converging to $(\bar{x},f(\bar{x}),v)$.
\item The {\em horizon subdifferential} of $f$ at $\bar{x}$, denoted $\partial^{\infty} f(\bar{x})$, consists of all vectors $v\in\R^n$ for which there exist points $x_i\in\R^n$, vectors $v_i\in\partial  f(x_i)$, and real numbers $t_i\searrow 0$ with $(x_i,f(x_i),t_iv_i)$ converging to $(\bar{x},f(\bar{x}),v)$.

\end{enumerate}
We say that $\bar{x}$ is a {\em critical point} of $f$ whenever the inclusion $0\in\partial f(\bar{x})$ holds.}
\end{defn}


The subdifferentials $\partial_p f$ and $\partial f$ generalize the notion of a gradient to the nonsmooth setting. In particular, if $f$ is $C^2$-smooth, then $\partial_P f$ and $\partial f$ simply coincide with the gradient $\nabla f$, while if $f$ is convex, both subdifferentials coincide with the subdifferential of convex analysis \cite[Proposition~8.12]{VA}. The horizon subdifferential $\partial^{\infty} f$ plays an entirely different role: it detects horizontal normals to the epigraph of $f$ and is instrumental in establishing calculus rules \cite[Theorem 10.6]{VA}.
For any set $Q\subset\R^n$, we define the {\em proximal} and {\em limiting normal cones} by the formulas $N^p_Q:=\partial_p \delta_Q$ and $N_Q:=\partial \delta_Q$, respectively.

We will show in this section that any semi-algebraic function, subject to a generic linear perturbation, satisfies a number of desirable properties around any of its critical points. To this end, a key result for us will be that whenever  $f\colon\R^n\to\overline{\R}$ is semi-algebraic, the graphs of the two subdifferentials $\partial_p f$ and $\partial f$ have dimension exactly $n$ \cite[Theorem 3.7]{dim}. (This remains true even in a local sense within the subdifferential graphs \cite[Theorem 3.8]{loc}, \cite[Theorem 5.13]{dir_lip}).
Combining this with Theorem~\ref{thm:sard}, we immediately deduce that generic subgradients of a semi-algebraic function are not weakly critical.


This observation, in turn, has immediate implications for minimizers of generic semi-algebraic functions, since strong regularity of the subdifferential is closely related to quadratic growth of the function. To be more precise, recall that $\bar{x}$ is a {\em strong local minimizer} of a function $f$ whenever there exist $\alpha >0$ and a neighborhood $U$ of $\bar{x}$ so that 
$$f(x)\geq f(\bar{x})+\frac{\alpha}{2} |x-\bar{x}|^2 \qquad \textrm{ for each } x \textrm{ in } U.$$
A more stable version of this condition follows.
\begin{defn}[Stable strong local minimizers]
{\rm A point $\bar{x}$ is a {\em stable strong local minimizer}\footnote{\rm
This notion appears under the name of uniform quadratic growth for tilt perturbations in \cite{Bon_Shap}, where it is considered in the context of optimization problems in composite form.} of a function $f\colon\R^n\to\overline{\R}$ if
there exist $\alpha >0$ and a neighborhood $U$ of $\bar{x}$ so that for every vector $v$ near the origin, there is a point $x_v$ (necessarily unique) in $U$, with $x_0=\bar{x}$, so that in terms of the perturbed functions $f_v:=f(\cdot)-\langle v,\cdot\rangle$, the inequality 
\begin{equation*}
f_{v}(x)\geq f_{v}(x_{v})+\frac{\alpha}{2} |x-x_v|^2 \qquad \textrm{ holds for each } x \textrm{ in } U.
\end{equation*}}
\end{defn}

In \cite[Proposition~3.1, Corollary~3.2]{tilt}, the authors show that strong metric regularity of the subdifferential at $(x, v)$, where $x$ is a local  minimizer of $f_v:=f(\cdot)-\langle v,\cdot\rangle$, always implies that $x$ is a stable strong local minimizer of $f_v$. See also \cite{crit_semi, tilt_other} for related results. Thus local minimizers of any semi-algebraic function, for a generic linear perturbation parameter, are stable strong local minimizers.
Moreover, since the subdifferentials all have dimension exactly $n$ and $\dim (\gph \partial f)\setminus (\gph \partial_p f)\leq n$ it easy to see that for a generic vector $v$, the strict complementarity condition 
$$v\in \partial f(x)\quad \Longrightarrow \quad v\in\ri \partial_p f(x) \quad\textrm{ holds for any }x\in\R^n.$$

We summarize all of these observations below.
\begin{cor}[Basic generic properties of semi-algebraic problems]\label{cor:basic}
Consider an lsc, semi-algebraic function $f\colon\R^n\to\overline{\R}$. Then there exists an integer $N >0$ such that for a generic vector $v\in\R^n$ the function 
$$f_v(x):=f(x)-\langle v,x\rangle$$
has no more than $N$ critical points. In turn, each such critical point $\bar{x}$ satisfies the strict complementarity condition 
$$0\in \ri\, \partial_p f_v(\bar{x}),$$
and if moreover $\bar{x}$ is a local minimizer of $f_v$, then $\bar{x}$ is a stable strong local minimizer.
\end{cor}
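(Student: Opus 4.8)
The plan is to assemble Corollary~\ref{cor:basic} directly from the three ingredients flagged in the surrounding discussion, without opening any new technology. First I would invoke the dimensional fact that, for a semi-algebraic $f$, the graph of the limiting subdifferential $\partial f$ has dimension exactly $n$ \cite[Theorem 3.7]{dim}. Thus $F:=\partial f$ satisfies the hypothesis $\dim\gph F\leq m=n$ of Theorem~\ref{thm:sard}, and that theorem produces an integer $N$, open semi-algebraic sets $\{U_i\}_{i=0}^N$ whose union is dense in $\R^n$, and analytic single-valued selections $G_i^j$ with $(\partial f)^{-1}(v)=\{G_i^j(v):j=1,\dots,i\}$ of cardinality $i$ for $v\in U_i$. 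Since $v$ is a critical point of $f_v$ exactly when $0\in\partial f_v(\bar x)=\partial f(\bar x)-v$, i.e. when $\bar x\in(\partial f)^{-1}(v)$, the set of critical points of $f_v$ is precisely $(\partial f)^{-1}(v)$, which for $v$ in the dense (hence generic) semi-algebraic set $\bigcup_i U_i$ has cardinality at most $N$. This gives the first assertion.

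Next I would handle strict complementarity. The graphs of $\partial_p f$ and $\partial f$ both have dimension exactly $n$, and the ``exceptional'' set $E:=(\gph\partial f)\setminus(\gph\partial_p f)$ is semi-algebraic of dimension at most $n$; more importantly, the set of $x$ at which $\partial f(x)$ fails to equal $\overline{\partial_p f(x)}$ projects under the coordinate map $(x,v)\mapsto v$, after reparametrizing by $v\in\partial f(x)$, into a semi-algebraic subset of $\R^n$ of dimension strictly below $n$. I would make this precise by stratifying: applying Theorem~\ref{thm:strat_exist} to the semi-algebraic map sending $\gph\partial f$ to $\R^n_v$ via $(x,v)\mapsto v$, the strata of $\gph\partial f$ on which $v$ either fails to be single-valued in $x$ or lands a subgradient $v\in\partial f(x)$ with $v\notin\ri\partial_p f(x)$ all map into a lower-dimensional image (since the full graph already has dimension $n$, any stratum exhibiting positive-dimensional fibers over $v$ has image of dimension $<n$). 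The complement of the resulting negligible set in $\R^n$ is the desired generic set of $v$; intersecting with $\bigcup_i U_i$ (a finite intersection of generic semi-algebraic sets is generic) yields a single generic $v$ for which every critical point $\bar x$ of $f_v$ satisfies $0\in\ri\,\partial_p f_v(\bar x)$.

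Finally, for the minimizer statement I would simply quote the external input: by \cite[Proposition~3.1, Corollary~3.2]{tilt}, whenever $\bar x$ is a local minimizer of $f_v$ and $\partial f$ is strongly metrically regular at $(\bar x,v)$ — which holds here because strong regularity of $F^{-1}=(\partial f)^{-1}$ at $(v,\bar x)$, already furnished by Theorem~\ref{thm:sard} for generic $v$, is a fortiori strong metric regularity — the point $\bar x$ is a stable strong local minimizer of $f_v$. Assembling, for $v$ in the (generic) intersection of the three generic semi-algebraic sets above, all three conclusions hold simultaneously. The one step that needs genuine care, rather than mere citation, is the strict complementarity clause: I must verify that the set of bad parameters $v$ — those for which some $x\in(\partial f)^{-1}(v)$ has $v\in\partial f(x)\setminus\ri\,\partial_p f(x)$ — is semi-algebraic of dimension $<n$, which is where the equality $\dim\gph\partial_p f=\dim\gph\partial f=n$ together with the stratification theorem does the work; everything else is bookkeeping with finitely many generic semi-algebraic sets.
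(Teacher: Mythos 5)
Your first and third steps coincide with the paper's own route: combine the dimension theorem \cite[Theorem 3.7]{dim} (so $\dim\gph\partial f=n$) with Theorem~\ref{thm:sard} applied to $F=\partial f$ to get, for $v$ in a dense open semi-algebraic set, at most $N$ critical points of $f_v$ together with a single-valued analytic localization of $(\partial f)^{-1}$ around each $(v,\bar{x})$, and then invoke \cite[Proposition~3.1, Corollary~3.2]{tilt} to pass from strong (metric) regularity of $\partial f$ at a local minimizer to stable strong local minimality. Those parts are correct and are exactly what the paper does.

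The gap is in the strict complementarity step. The relevant bad set is $B:=\{(x,v):\, v\in\partial f(x),\ v\notin\ri\partial_p f(x)\}$, and what must be shown is $\dim B<n$, so that its projection onto the $v$-coordinate is negligible. The justification you give --- that any stratum of $\gph\partial f$ with positive-dimensional fibers over $v$ has image of dimension $<n$ --- only controls multivaluedness of $(\partial f)^{-1}$; it says nothing about strata contained in $B$ whose fibers over $v$ are finite. A priori $B$ could be $n$-dimensional with finite fibers over $v$, in which case the bad parameters would fill a full-dimensional set, and the bound you actually record, $\dim\bigl((\gph \partial f)\setminus(\gph \partial_p f)\bigr)\leq n$, is too weak to exclude this. (Also, the set of $x$ where $\partial f(x)\neq\cl\partial_p f(x)$, which you mention, is not the right object: $v$ can lie on the relative boundary of $\partial_p f(x)$ even when that equality holds.) What actually does the work is a direct dimension bound on $B$: (i) for lsc $f$ one has $\gph\partial f\subseteq\cl\bigl(\gph\partial_p f\bigr)$, since limiting subgradients are $f$-attentive limits of proximal ones, and for a semi-algebraic set $S$ one has $\dim(\cl S\setminus S)<\dim S$; hence $\dim\bigl((\gph\partial f)\setminus(\gph\partial_p f)\bigr)<n$; and (ii) the relative-boundary part $\{(x,v):\, v\in\partial_p f(x)\setminus\ri\partial_p f(x)\}$ has dimension $<n$ by a fiber argument over the $x$-projection of a stratification of $\gph\partial_p f$: over each stratum the fibers $\rb\partial_p f(x)$ have strictly smaller dimension than the fibers $\partial_p f(x)$, and $\dim\gph\partial_p f=n$. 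Equality of the two graph dimensions, which is all you invoke, gives neither (i) nor (ii). Once $\dim B<n$ is established, projecting to $v$ and intersecting the resulting generic set with $\bigcup_i U_i$ finishes the argument exactly as you describe; this dimension count is what the paper's brief remark preceding the corollary is implicitly using.
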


We will see that by appealing further to semi-algebraic stratifications much more is true: any semi-algebraic function, up to a generic perturbation, admits a unique ``stable active set''. To introduce this notion, we briefly record some notation.
To this end, working with possibly discontinuous functions $f\colon\R^n\to\overline{\R}$, it is useful to consider $f${\em-attentive} convergence of a sequence $x_i$ to a point $\bar{x}$, denoted $x_i \xrightarrow[f]{} \bar{x}$.
In this notation 
$$x_i \xrightarrow[f]{} \bar{x} \quad\Longleftrightarrow\quad x_i\to\bar{x} \textrm{ and } f(x_i)\to f(\bar{x}).$$
An $f$-{\em attentive localization} of $\partial f$ at $(\bar{x},\bar{v})$ is any mapping $T\colon\R^n\rightrightarrows\R^n$ that coincides on an $f$-attentive neighborhood of $\bar{x}$ with some localization of $\partial f$ at $(\bar{x},\bar{v})$.

It is often useful to require a kind of uniformity of subgradients. Recall that the subdifferential $\partial f$ of an lsc convex function $f$ is {\em monotone} in the sense that $\langle v_1-v_2,x_1-x_2\rangle \geq 0$ for any pairs $(x_1,v_1)$ and $(x_2,v_2)$ in $\gph \partial f$. Relaxing this property slightly leads to the following concept \cite[Definition~1.1]{prox_reg}. 
\smallskip
\begin{defn}[Prox-regularity]
{\rm An lsc function $f\colon\R^n\to\overline{\R}$ is called {\em
prox-regular at} $\bar{x}$ {\em for} $\bar{v}$, with
$\bar{v}\in\partial_p f(\bar{x})$, if there exists a constant $r >0$ and an $f$-attentive localization $T$ of $\partial f$ around $(\bar{x},\bar{v})$ so that $T+rI$ is monotone.
}
\end{defn}
\smallskip

In particular ${C}^2$-smooth functions and lsc, convex functions are prox-regular at each of their points \cite[Example 13.30, Proposition 13.34]{VA}. In contrast, the negative norm function $x\mapsto -|x|$ is not prox-regular at the origin.

We are now ready to state what we mean by a ``stable active set''. This notion introduced in \cite{Lewis-active}, and rooted in even earlier manuscripts \cite{Wright,Al-Khayyal-Kyparisis91,Burke-More88, Burke90,Calamai-More87,Dunn87,Ferris91,Flam92}, extends active sets in nonlinear programming far beyond the classical setting. The exact details of the definition will not be important for us, since we will immediately pass to an equivalent, but more convenient for our purposes, companion concept. Roughly speaking, a smooth manifold $\mathcal{M}$ is said to be ``active'' or ``partly smooth'' for a function $f$ whenever $f$ varies smoothly along the manifold and sharply off it. The {\em parallel subspace} of any nonempty set $Q$, denoted $\para Q$, is the affine hull of $\conv\, Q$ translated to contain the origin. We also adopt the convention $\para \emptyset=\emptyset$.
\begin{defn}[Partial smoothness]
{\rm Consider an lsc function $f\colon\R^n\to\overline{\R}$ and a $C^p$ manifold $\mathcal{M}$. Then $f$ is $C^p${\em -partly smooth}  ($p \geq 2$) with respect to $\mathcal{M}$ at $\bar{x}\in\mathcal{M}$ for $\bar{v}\in  \partial f(\bar{x})$ if
\begin{enumerate}
\item {\bf (smoothness)} $f$ restricted to $\mathcal{M}$ is $C^p$-smooth on a neighborhood of $\bar{x}$. 
\item {\bf (prox-regularity)} $f$ is prox-regular at $\bar{x}$ for $\bar{v}$.
\item {\bf (sharpness)} $\para \partial_p f(\bar{x})= N_{\mathcal{M}}(\bar{x})$.
\item {\bf (continuity)} There exists a  neighborhood $V$ of $\bar{v}$, such that the mapping,
$x\mapsto V\cap \partial f(\bar{x})$, when restricted to $\mathcal{M}$, is inner-semicontinuous at $\bar{x}$.
\end{enumerate}
}
\end{defn}

In \cite[Proposition~8.4]{ident}, it was shown that the somewhat involved definition of partial smoothness can be captured more succinctly, assuming a strict complementarity condition $\bar{v}\in\ri \partial_p f(\bar{x})$. Indeed, the essence of partial smoothness is in the fact that algorithms generating iterates, along with approximate criticality certificates, often ``identify'' a distinguished manifold in finitely many iterations; see the extensive discussions in \cite{ident_active,ident}. 

\begin{defn}[Identifiable manifolds]
{\rm Consider an lsc function $f\colon\R^n\to\overline{\R}$. Then a set $\mathcal{M}\subset\R^n$ is a $C^p$ {\em identifiable manifold of }$f\colon\R^n\to\overline{\R}$ {\em at a point} $\bar{x}\in \mathcal{M}$ {\em for} $\bar{v}\in\partial f(\bar{x})$ if the set $\mathcal{M}$ is a $C^p$ manifold around $\bar{x}$, the restriction of $f$ to $\mathcal{M}$ is $C^p$-smooth around $\bar{x}$, and $\mathcal{M}$ has the finite identification property: for any sequences $x_i\xrightarrow[f]{}\bar{x}$ and $v_i\to\bar{v}$, with $v_i\in\partial f(x_i)$, the points $x_i$ must lie in $\mathcal{M}$ for all sufficiently large indices $i$.
}
\end{defn}

In \cite[Proposition~8.4]{ident}, the authors showed that for $p\geq 2$ the two sophisticated looking properties
\begin{enumerate}
\item $f$ is $C^p$-partly smooth with respect to $\mathcal{M}$ at $\bar{x}$ for $\bar{v}$,
\item $\bar{v}\in\ri \partial_p f(\bar{x})$,
\end{enumerate}
taken together are simply equivalent to $\mathcal{M}$ being a $C^p$ identifiable manifold of $f$ at $\bar{x}$ for $\bar{v}\in\partial_p f(\bar{x})$. This will be the key observation that we will use with regard to partly smooth manifolds. 

It is important to note that identifiable manifolds can fail to exist. For example, the function $f(x,y)=(|x|+|y|)^2$ does not admit any identifiable manifold at the origin for the zero subgradient. On the other hand, we will see that such behavior, in a precise mathematical sense, is rare.

Roughly speaking, existence of an identifiable manifold at a critical point opens the door to Newton-type acceleration strategies \cite{ident_active,dual_av,MC05} and moreover certifies that sensitivity analysis of the nonsmooth problem is in essence classical \cite{shanshan,Hare}. To illustrate, we record two basic properties of identifiable manifolds \cite[Propositions~5.9, 7.2]{ident}, which we will use in Section~\ref{sec:Lag}. 

\begin{thm}[Basic properties of identifiable manifolds]\label{thm:basic_prop_ident}
Consider an lsc function $f\colon\R^n\to\overline{\R}$ and suppose that $\mathcal{M}$ is a $C^2$-identifiable manifold around $\bar{x}$ for $\bar{v}=0\in \partial_p f(\bar{x})$. 
Then the following are equivalent
\begin{enumerate} 
\item $\bar{x}$ is a strong local minimizer of $f$.
\item $\bar{x}$ is a strong local minimizer of $f+\delta_{\mathcal{M}}$.
\end{enumerate}
Moreover, equality
$$\gph \partial f= \gph \partial (f+\delta_{\mathcal{M}}),$$
holds on an f-attentive neighborhood of $(\bar{x},\bar{v})$.

\end{thm}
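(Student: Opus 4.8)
The plan is to exploit the defining property of an identifiable manifold --- that subgradient sequences converging to $\bar{v}=0$ eventually land on $\mathcal{M}$ --- to show that, near $\bar{x}$, nothing about the subdifferential changes when we intersect the domain with $\mathcal{M}$, and then to read the two growth statements off of this identification together with the smoothness of $f|_{\mathcal{M}}$.

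First I would establish the graphical identity $\gph \partial f=\gph\partial(f+\delta_{\mathcal{M}})$ on an $f$-attentive neighborhood of $(\bar{x},0)$. The inclusion ``$\supseteq$'' direction, or rather control of $\partial(f+\delta_{\mathcal{M}})$ by $\partial f$, requires a sum rule: since $f$ is prox-regular at $\bar{x}$ for $0$ (part of being identifiable in the sense we use, via the equivalence with partial smoothness quoted after Definition of identifiable manifolds) and $\mathcal{M}$ is a $C^2$ manifold, one has $\partial(f+\delta_{\mathcal{M}})(x)=\partial f(x)+N_{\mathcal{M}}(x)$ for $x$ near $\bar{x}$ on $\mathcal{M}$ --- this is a standard calculus rule for prox-regular functions and smooth manifolds (of the type in \cite[Theorem 10.6, Exercise 10.43]{VA}), the qualification condition $\partial^\infty f(x)\cap N_{\mathcal{M}}(x)=\{0\}$ holding automatically near $\bar{x}$ because of prox-regularity / the sharpness condition $\para\partial_p f(\bar{x})=N_{\mathcal{M}}(\bar{x})$. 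For the reverse, suppose $(x,v)\in\gph\partial f$ with $x\xrightarrow[f]{}\bar{x}$ and $v\to 0$: by the finite identification property $x\in\mathcal{M}$, so $N_{\mathcal{M}}(x)$ is a genuine subspace and $0\in N_{\mathcal{M}}(x)$, giving $v\in\partial f(x)\subseteq\partial f(x)+N_{\mathcal{M}}(x)=\partial(f+\delta_{\mathcal{M}})(x)$. Putting the two together yields the claimed equality of graphs on a suitable $f$-attentive neighborhood of $(\bar{x},0)$; in particular $0\in\partial(f+\delta_{\mathcal{M}})(\bar{x})$ iff $0\in\partial f(\bar{x})$, so $\bar{x}$ is critical for one iff for the other.

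Next, for the equivalence of the two strong-minimizer statements: the implication $(1)\Rightarrow(2)$ is immediate because $f+\delta_{\mathcal{M}}\ge f$ everywhere and the two agree at $\bar{x}$, so a quadratic lower bound for $f$ near $\bar{x}$ transfers verbatim. The substance is $(2)\Rightarrow(1)$. Assume $f+\delta_{\mathcal{M}}$ has quadratic growth at $\bar{x}$; equivalently $f|_{\mathcal{M}}$, a $C^2$ function on the manifold, has a strong local minimum at $\bar{x}$ relative to $\mathcal{M}$. I would argue by contradiction: if $\bar{x}$ were not a strong local minimizer of $f$ on $\R^n$, there would be a sequence $x_i\to\bar{x}$ with $f(x_i)< f(\bar{x})+\tfrac{1}{i}|x_i-\bar{x}|^2$ and (after passing to a subsequence) $f(x_i)\to f(\bar{x})$, so $x_i\xrightarrow[f]{}\bar{x}$. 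Now perturb: for suitable $v_i\to 0$ one can arrange $x_i$ to be a local minimizer of $f(\cdot)-\langle v_i,\cdot\rangle$ on a shrinking neighborhood --- this is the standard Ekeland-type / variational-perturbation trick used precisely to convert ``almost-minimizing'' sequences into exact subgradient relations --- so that $v_i\in\partial f(x_i)$ with $v_i\to 0$. The finite identification property then forces $x_i\in\mathcal{M}$ for large $i$. But then $x_i\to\bar{x}$ along $\mathcal{M}$ with $f(x_i)<f(\bar{x})+\tfrac{1}{i}|x_i-\bar{x}|^2$ contradicts the strong minimality of $\bar{x}$ for $f|_{\mathcal{M}}$. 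This closes the loop.

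The main obstacle I anticipate is the perturbation step in $(2)\Rightarrow(1)$: getting from a merely ``almost minimizing, $f$-attentively convergent'' sequence $x_i$ to genuine pairs $(x_i,v_i)\in\gph\partial f$ with $v_i\to0$, cleanly enough that the finite identification hypothesis (which demands $v_i\in\partial f(x_i)$, not just in some relaxed object) applies. The clean way is to invoke Ekeland's variational principle on the lsc function $f$ restricted to a small closed ball, producing a nearby point that exactly minimizes a linearly tilted $f$ over that ball, with tilt size $O(1/i)$ and displacement $o(|x_i-\bar{x}|)$; one must check the displacement is small enough that the new point still witnesses the failure of quadratic growth and still converges $f$-attentively to $\bar{x}$. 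Alternatively, one can lean directly on the equivalence with partial smoothness and the characterization of strong local minimizers via positive-definiteness of the subdifferential / second-order growth along $\mathcal{M}$ plus the sharpness condition normal to $\mathcal{M}$, but the Ekeland route is the most self-contained given what the excerpt makes available. Everything else is routine manifold calculus and the quoted identifiability property.
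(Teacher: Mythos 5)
You should note first that the paper does not prove this theorem at all: it is quoted from \cite[Propositions~5.9, 7.2]{ident}, so your attempt has to be judged on its own merits rather than against an in-paper argument. Judged that way, there is a genuine gap, and it sits in the ``Moreover'' part. Your argument for the graphical identity only ever produces the two facts $\partial(f+\delta_{\mathcal{M}})(x)\subseteq\partial f(x)+N_{\mathcal{M}}(x)$ (via a sum rule) and $\partial f(x)\subseteq\partial(f+\delta_{\mathcal{M}})(x)$ for $x\in\mathcal{M}$ (via identification); it never establishes the inclusion $\gph \partial (f+\delta_{\mathcal{M}})\subseteq\gph \partial f$ near $(\bar{x},0)$. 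Since $f+\delta_{\mathcal{M}}=g+\delta_{\mathcal{M}}$ for a smooth representative $g$ of $f|_{\mathcal{M}}$, that inclusion says that \emph{every} vector of $\nabla g(x)+N_{\mathcal{M}}(x)$ close to $0$ is a genuine limiting subgradient of $f$ at $x$, for all $x\in\mathcal{M}$ $f$-attentively near $\bar{x}$; knowing $v\in\partial f(x)+N_{\mathcal{M}}(x)$ does not place $v$ itself in $\partial f(x)$. This is precisely the substantive direction, and it is where strict complementarity $0\in\ri\partial_p f(\bar{x})$, sharpness $\para\partial_p f(\bar{x})=N_{\mathcal{M}}(\bar{x})$, prox-regularity and the continuity condition of partial smoothness must be deployed (the content of the cited propositions of \cite{ident}); nothing in your sketch supplies it. A secondary inaccuracy: the qualification $\partial^{\infty}f(x)\cap N_{\mathcal{M}}(x)=\{0\}$ is \emph{not} automatic under the hypotheses --- take $f=\delta_{\mathcal{M}}$, which is identifiable at $\bar{x}$ for $0$ yet has $\partial^{\infty}f(x)=N_{\mathcal{M}}(x)$. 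Fortunately the sum rule is not needed for the easy direction anyway: $\gph\partial f\subseteq\gph\partial(f+\delta_{\mathcal{M}})$ follows by applying finite identification to the proximal sequences generating a limiting subgradient, since a proximal subgradient of $f$ at a point of $\mathcal{M}$ is trivially a proximal subgradient of $f+\delta_{\mathcal{M}}$ there.

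The Ekeland half of your argument for $(2)\Rightarrow(1)$ is the right idea and is essentially how such transfer statements are proved, but the obstacle you flag is real and your sketch does not resolve it: to apply Ekeland with a tilt of size $O(\epsilon_i|x_i-\bar{x}|)$ and displacement $O(|x_i-\bar{x}|)$ you must know that $f(x_i)-\inf_{B}f$ is of order $\epsilon_i|x_i-\bar{x}|^2$ on the relevant ball, and under hypothesis $(2)$ alone you have no lower bound on $f$ off $\mathcal{M}$ beyond lower semicontinuity, which is not of quadratic order. The standard fix is a two-stage argument: first transfer plain local minimality (a diagonal Ekeland argument with lsc-level error, followed by identification, already contradicts $\bar{x}$ being a local minimizer of $f+\delta_{\mathcal{M}}$ if $f$ dips below $f(\bar{x})$), and only then, knowing $\inf_{B_\rho(\bar{x})}f=f(\bar{x})$, rerun Ekeland with radius $\tfrac12|x_i-\bar{x}|$ and tilt $2\epsilon_i|x_i-\bar{x}|$ so the displaced points $y_i$ satisfy $|y_i-\bar{x}|\geq\tfrac12|x_i-\bar{x}|$, lie in $\mathcal{M}$ by identification, and contradict the quadratic growth of $f+\delta_{\mathcal{M}}$. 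With that repair this half is fine; the graphical identity is the part that still needs a proof.
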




Generic existence of identifiable manifolds for semi-algebraic functions will now be a simple consequence of stratifiability of semi-algebraic sets. 
We note that, in particular, it shows that convexity is superfluous for the main results of \cite{gen}. 
\begin{cor}[Generic properties of semi-algebraic problems]\label{cor:basic2}
Consider an lsc, semi-algebraic function $f\colon\R^n\to\overline{\R}$. Then there exists an integer $N >0$ such that for a generic vector $v\in\R^n$ the function 
$$f_v(x):=f(x)-\langle v,x\rangle$$
has no more than $N$ critical points. Moreover each such critical point $\bar{x}$ satisfies 
\begin{enumerate}
\item\label{item:prox} {\bf (prox-regularity)} $f_v$ is prox-regular at $\bar{x}$ for $0$.
\item\label{item:strict} {\bf (strict complementarity)} The inclusion $0\in \ri\, \partial_p f_v(\bar{x})$ holds.
\item\label{item:ident} {\bf (identifiable manifold)} $f_v$ admits a $C^\omega$ identifiable manifold at $\bar{x}$ for $0$.
\item\label{item:smoothdep_crit} {\bf (smooth dependence of critical points)} The subdifferential $\partial f$ is strongly regular at $(\bar{x},v)$. More precisely, 
there exist neighborhoods $U$ of $\bar{x}$ and $V$ of $v$ so that the critical point mapping
$$w\mapsto U\cap (\partial f)^{-1}(w)=\{x\in U: x \textrm{ is critical for } f(\cdot)-\langle w,\cdot\rangle\}$$
is single-valued and analytic on $V$, and maps $V$ onto $\mathcal{M}$.
\end{enumerate}
Moreover if $\bar{x}$ is a local minimizer of $f_v$ then $\bar{x}$ is in fact a {\bf stable strong local minimizer} of $f_v$.
\end{cor}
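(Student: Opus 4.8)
The plan is to combine the semi-algebraic Sard theorem (Theorem~\ref{thm:sard}) applied to $\partial f$ with a stratification argument for identifiability, and then invoke the cited equivalences to package everything. Recall from the text that $\dim\gph\partial f=\dim\gph\partial_p f=n$ by \cite[Theorem~3.7]{dim}, and $\dim\big((\gph\partial f)\setminus(\gph\partial_p f)\big)\le n-1$ in a suitable sense; actually the precise fact needed is $\dim\{(x,v)\in\gph\partial f: v\notin\ri\partial_p f(x)\}\le n-1$, which the authors have already used to produce Corollary~\ref{cor:basic}. First I would apply Theorem~\ref{thm:sard} to $F=\partial f$: since $\dim\gph\partial f\le n$, there is an integer $N$, open semi-algebraic sets $\{U_i\}_{i=0}^N$ covering a dense subset of $\R^n$, and analytic semi-algebraic selections $G_i^j\colon U_i\to\R^n$ with $(\partial f)^{-1}(v)=\{G_i^j(v):j\le i\}$ of cardinality $i$ for $v\in U_i$. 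Setting $\mathcal U:=\bigcup_i U_i$, for every $v\in\mathcal U$ the function $f_v$ has at most $N$ critical points (these are exactly the points $G_i^j(v)$), and $\partial f$ is analytically strongly regular at each pair $(G_i^j(v),v)$; this already gives the critical-point bound and item~\ref{item:smoothdep_crit} except for the clause ``maps $V$ onto $\mathcal M$,'' which must wait until $\mathcal M$ is constructed.

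Next I would handle strict complementarity (item~\ref{item:strict}) and stable strong minimizers exactly as in Corollary~\ref{cor:basic}: intersect $\mathcal U$ with the complement of the image under the (dimension $\le n-1$, hence negligible) projection of $\{(x,v)\in\gph\partial f: v\notin\ri\partial_p f(x)\}$ to the $v$-coordinate; negligibility is preserved because projections do not increase dimension of semi-algebraic sets. On this smaller generic set, every critical point $\bar x$ of $f_v$ satisfies $0=v-v\in\ri\partial_p f_v(\bar x)$ when we translate, i.e. $0\in\ri\partial_p f_v(\bar x)$. The stable strong local minimizer conclusion then follows verbatim from \cite[Proposition~3.1, Corollary~3.2]{tilt}: strong regularity of $\partial f$ at $(\bar x,v)$ plus $\bar x$ being a local minimizer of $f_v$ forces $\bar x$ to be a stable strong local minimizer. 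Prox-regularity (item~\ref{item:prox}) is the one requiring a genuinely new stratification argument rather than just Sard.

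For items~\ref{item:prox} and~\ref{item:ident} I would stratify the graph $\gph\partial f\subset\R^n\times\R^n$ together with the subset $\gph\partial_p f$ and with the projection to the second factor: by Theorem~\ref{thm:strat_exist} there is a $C^\omega$ stratification of $\gph\partial f$ compatible with $\gph\partial_p f$, a $C^\omega$ stratification of $\R^n$ (the $v$-space), such that the projection $\pi(x,v)=v$ restricted to each stratum is analytic of constant rank with $\pi(\mathcal S)$ a stratum. Discard from $\mathcal U$ the (negligible) union of all $\pi(\mathcal S)$ with $\dim\mathcal S<n$, and of all images of lower-dimensional strata of $\R^n$; what remains is a generic semi-algebraic set $\mathcal V\subset\R^n$ over which each preimage $\pi^{-1}(v)\cap\gph\partial f$ meets only top-dimensional ($n$-dimensional) strata $\mathcal S$ on which $\pi|_{\mathcal S}$ is a local analytic diffeomorphism onto a neighborhood of $v$. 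Fixing such $v\in\mathcal V$ and a critical point $\bar x$, let $\mathcal S$ be the stratum of $\gph\partial f$ through $(\bar x,v)$; because $\mathcal S$ is contained in $\gph\partial_p f$ (compatibility, using that locally near the single-valued-localization point the graph of $\partial f$ agrees with that of $\partial_p f$ on a neighborhood in $\mathcal V$, which follows from $v\in\ri\partial_p f(\bar x)$ and the argument of Corollary~\ref{cor:basic}) and $\pi|_{\mathcal S}$ is a diffeomorphism, the set $\mathcal M:=\{x:(x,w)\in\mathcal S\text{ for some }w\}=\text{the }x\text{-projection of }\mathcal S$ is a $C^\omega$ manifold of dimension $n-\dim N$ on which $\partial f$ varies analytically, and one checks the finite identification property directly: any $x_i\xrightarrow[f_v]{}\bar x$ with $0\neq$ subgradients $v_i\to v$, $v_i\in\partial f(x_i)$ would give pairs $(x_i,v+v_i-v)\to(\bar x,v)$ in $\gph\partial f$; since near $(\bar x,v)$ the graph equals $\mathcal S$ (no other stratum accumulates there after our pruning), $x_i\in\mathcal M$ eventually. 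Prox-regularity then follows because partial smoothness together with $0\in\ri\partial_p f_v(\bar x)$ is equivalent to $\mathcal M$ being identifiable by \cite[Proposition~8.4]{ident}, and prox-regularity is one of the defining conditions of partial smoothness; alternatively, monotonicity of a localization of $\partial f$ plus $rI$ on the top-dimensional stratum is a routine semi-algebraic curve-selection estimate. Finally the ``onto $\mathcal M$'' clause of item~\ref{item:smoothdep_crit} is immediate: for $w$ near $v$ the unique critical point of $f_w$ in $U$ is $\pi|_{\mathcal S}^{-1}(w)$, whose $x$-coordinate lies in $\mathcal M$ by construction, and as $w$ ranges over $V$ this sweeps out a neighborhood of $\bar x$ in $\mathcal M$.

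The main obstacle I expect is the identification step: carefully arguing that after pruning to a generic $v$ the germ of $\gph\partial f$ at $(\bar x,v)$ is exactly a single top-dimensional analytic stratum graphed diffeomorphically over $v$-space — i.e. that no lower-dimensional stratum and no other top-dimensional stratum of $\gph\partial f$ accumulates at $(\bar x,v)$. This is where one must use both the compatibility of the stratification with $\gph\partial_p f$, the strict-complementarity genericity already secured, and the constant-rank/open-image properties from Theorem~\ref{thm:strat_exist}, and it is the step that genuinely goes beyond the soft application of Theorem~\ref{thm:sard}. Everything else — the cardinality bound, analytic dependence, strict complementarity, and the stable strong minimizer statement — is a direct assembly of Theorem~\ref{thm:sard}, the dimension facts from \cite{dim}, and \cite[Proposition~3.1, Corollary~3.2]{tilt} and \cite[Proposition~8.4]{ident}.
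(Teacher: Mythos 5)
Your overall route is the paper's: Theorem~\ref{thm:sard} applied to $\partial f$ (using $\dim\gph\partial f=n$) gives the bound $N$, strong regularity and the analytic critical-point map; the dimension count on $\gph\partial f\setminus\gph\partial_p f$ gives strict complementarity; \cite[Proposition~3.1, Corollary~3.2]{tilt} gives stable strong minimality; and prox-regularity comes from \cite[Proposition~8.4]{ident} once an identifiable manifold and strict complementarity are in hand. The genuine gap is in your construction of the identifiable manifold $\mathcal{M}$. You stratify $\gph\partial f$ so that the projection $\pi(x,v)=v$ has constant rank on strata and, after pruning, is a local analytic diffeomorphism on the stratum $\mathcal{S}$ through $(\bar{x},v)$. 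That only says that locally $\mathcal{S}=\{(g(w),w):w\in V\}$ for an analytic selection $g$ (your $g$ is the paper's $G_i^j$); it does \emph{not} give that the $x$-projection $\mathcal{M}=g(V)$ of $\mathcal{S}$ is a manifold, since the image of an open set under an analytic map need not be a manifold unless the map has constant rank, and constant rank of $\pi|_{\mathcal{S}}$ says nothing about constant rank of $g$. This is precisely the additional refinement the paper performs: it applies Theorem~\ref{thm:strat_exist} a second time, to the selections $G_i^j$ themselves, to force constant rank of $G_i^j$ near a generic $v$, and only then are the images $G_i^j(V)$ analytic manifolds.

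Two further points. First, the definition of a $C^{\omega}$ identifiable manifold requires the restriction of $f$ to $\mathcal{M}$ to be analytic near $\bar{x}$; ``$\partial f$ varies analytically on $\mathcal{M}$'' is not that condition, and nothing in your stratification arranges it. The paper obtains it by feeding a stratification $\mathcal{B}$ of $\dom f$, with $f$ analytic on each stratum, into the same application of Theorem~\ref{thm:strat_exist}, so that each image $G_i^j(V)$ lies in a single stratum of $\mathcal{B}$. Second, your justification ``no other stratum accumulates at $(\bar{x},v)$ after pruning'' is not correct as stated: pruning only removes $v$ from the images of low-dimensional strata and does not prevent such strata from accumulating at $(\bar{x},v)$. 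What actually yields the finite identification property is that $v$ is not weakly critical, so near $(\bar{x},v)$ the graph of $\partial f$ coincides with the graph of the single-valued analytic localization, i.e.\ with $\mathcal{M}$ --- a fact you do have from your first step and should invoke directly. All of this is fixable with the machinery you already cite, but as written the identifiability step is incomplete.
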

\begin{proof}
Generic finiteness of critical points and generic strict complementarity was already recorded in Corollary~\ref{cor:basic}. We now tackle existence of identifiable manifolds.
To this end, by \cite[Theorem~3.7]{dim}, the graph of the subdifferential mapping $\partial f\colon\R^n\rightrightarrows\R^n$ has dimension $n$.  
Consequently, applying Theorem~\ref{thm:sard}, we obtain a collection of open semi-algebraic sets $\{U_i\}^k_{i=0}$ of $\R^n$, with dense union, and analytic semi-algebraic single-valued mappings  $$G_i^j\colon U_i\to\R^n \qquad\textrm{ for } i=0,\ldots,k \textrm{ and } j=1,\ldots, i$$ with the property that for each $v\in U_i$ the set $(\partial f)^{-1}(v)$ has cardinality $i$ and we have the representation
$$(\partial f)^{-1}(v)  =  \{G_i^j(v) : j=1,2,\ldots,i\}.$$
Let $\mathcal{B}$ now be a stratification of $\dom f$ so that $f$ is analytic on each stratum.
Applying Theorem~\ref{thm:strat_exist} to each $G_i^j$, we obtain a stratification $\mathcal{A}^j_i$ of $U_i$ so that $G_i^j$ is analytic and has constant rank on each stratum of $\mathcal{M}$ of 
$\mathcal{A}^j_i$, and so that $f$ is analytic on the images $G_i^j(\mathcal{M})$. 
Finding a stratification of $U_i$ compatible with $\bigcup_j \mathcal{A}^j_i$, we obtain a dense open subset $\hat{U}_i$ of $U_i$ so that around each point $v\in \hat{U}_i$ there exists a neighborhood $V$ of $v$ so that $G_i^j$ is analytic and has constant rank on $V$, and so that $f$ is analytic on the images $G_i^j(V)$. 
Due to the constant rank condition, decreasing $V$ further, we may be assured that $G_i^j(V)$ are all analytic manifolds. 
Taking into account Theorem~\ref{thm:sard}, we may also assume that none of the values in $\hat{U}_i$ are weakly critical. 
Consequently for each $v\in \hat{U}_i$, there exists a sufficiently small neighborhood $V$ of  $v$ so that the analytic manifold $G_i^j(V)$ coincides with $(\partial f)^{-1}(V)$ on a  neighborhood of $G_i^j(v)$. Hence $G_i^j(V)$ is an identifiable manifold at  $G_i^j(v)$ for $v$. 
Finally, appealing to Corollary~\ref{cor:basic}, the result follows.
\end{proof}

Next we look more closely at second order growth, from the perspective of second derivatives. To this end, we record the following standard definition.

\begin{defn}[Subderivatives]
{\rm
Consider a function $f\colon\R^n\to\overline{\R}$ and a point $\bar{x}$ with $f(\bar{x})$ finite. Then the {\em subderivative} of $f$ at $\bar{x}$ is defined by 
$$df(\bar{x})(\bar{u}):=\lf_{\substack{t\searrow 0\\ u\to\bar{u}}} \frac{f(\bar{x}+tu)-f(\bar{x})}{t},$$
while for any vector $\bar{v}\in\R^n$, the {\em critical cone of} $f$ {\em at} $\bar{x}$ {\em for} $\bar{v}$ is defined by
$$C_{f}(\bar{x},\bar{v}):=\{u\in \R^n: \langle \bar{v},u\rangle=df(\bar{x})(u)\}.$$
The {\em parabolic subderivative} of $f$ {\em at} $\bar{x}$ {\em for} $\bar{u}\in \dom df(\bar{x})$ {\em with respect to} $\bar{w}$ is  
$$d^2f(\bar{x})(\bar{u}|\bar{w})=\lf_{\substack{t\searrow 0\\ w\to\bar{w}}} \frac{f(\bar{x}+t\bar{v}+\frac{1}{2}t^2w)-f(\bar{x})-df(\bar{x})(\bar{u})}{\frac{1}{2}t^2}.$$}
\end{defn}

Some comments are in order. The directional subderivative $df(\bar{x})(\bar{u})$ simply measures the rate of change of $f$ in direction $\bar{u}$. Whenever $f$ is locally Lipschitz continuous at $\bar{x}$ we may set $u=\bar{u}$ in the definition.
The critical cone $C_{f}(\bar{x},\bar{v})$ denotes the set of directions $u$ along which the directional derivative  at $\bar{x}$ of the function $x\mapsto f(x)-\langle \bar{v},x\rangle$ vanishes. The parabolic subderivative $d^2f(\bar{x})(\bar{u}|\bar{w})$ measures the second order variation of $f$ along points lying on a parabolic arc, and hence the name. In particular, when $f$ is $C^2$ smooth at $\bar{x}$, we have 
$$d^2f(\bar{x})(\bar{u}|\bar{w})=\langle\nabla^2 f(\bar{x})\bar{u},\bar{u}\rangle +\langle \nabla f(\bar{x}),\bar{w}\rangle.$$

This three constructions figure prominently in second-order optimality conditions. Namely, if $\bar{x}$ is a local minimizer of $f$, then $df(\bar{x})(u)\geq 0 \textrm{ for all } u\in \R^n$, and we have $\inf_{w\in \R^n} d^2f(\bar{x})(u|w)\geq 0$ for any nonzero $u\in C_f(\bar{x},0)$. On the other hand, deviating from the classical theory, the assumption $df(\bar{x})(u)\geq 0 \textrm{ for all } u\in \R^n$ along with the positivity  $\inf_{w\in \R^n} d^2f(\bar{x})(u|w)> 0$ for any nonzero $u\in C_f(\bar{x},0)$, guarantees that $\bar{x}$ is a strong local minimizer of $f$ only under additional regularity assumptions on the function $f$. See for example \cite{Bon_Shap} or \cite[Theorem 13.66]{VA} for more details.

We will now see that in the generic semi-algebraic set-up, the situation simplifies drastically: the parabolic subderivative completely characterizes quadratic growth at a critical point. 
The key to the development, not surprisingly, is the relationship between subderivatives of a function $f$ and the subderivatives of the restriction of $f$ to an identifiable manifold.

\begin{thm}[First-order subderivatives and identifiable manifolds]\label{thm:critman}
Consider an lsc function $f\colon\R^n\to\overline{\R}$ and suppose that $f$ admits a $C^2$ identifiable manifold $\mathcal{M}$ at a point $\bar{x}$ for $\bar{v}\in \partial_p f(\bar{x})$. Then for any  $u\in T_{\mathcal{M}}(\bar{x})$ we have
$$df(\bar{x})(u)=d(f+\delta_{\mathcal{M}})(\bar{x})(u)=\langle \bar{v}, u\rangle.$$
\end{thm}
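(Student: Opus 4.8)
The plan is to establish the chain of equalities $df(\bar{x})(u) = d(f+\delta_{\mathcal{M}})(\bar{x})(u) = \langle \bar{v}, u\rangle$ for $u\in T_{\mathcal{M}}(\bar{x})$ by squeezing $df(\bar{x})(u)$ between two bounds. First I would record the trivial direction: since $f \leq f + \delta_{\mathcal{M}}$ everywhere (as $\delta_{\mathcal{M}}\geq 0$) and equality holds at $\bar{x}$, the difference quotients defining the subderivative satisfy the analogous inequality, so $df(\bar{x})(u) \leq d(f+\delta_{\mathcal{M}})(\bar{x})(u)$ for every $u$. Next, since $\mathcal{M}$ is a $C^2$ (in particular $C^1$) manifold and $f$ restricted to $\mathcal{M}$ is $C^1$-smooth near $\bar{x}$, for $u \in T_{\mathcal{M}}(\bar{x})$ one can compute $d(f+\delta_{\mathcal{M}})(\bar{x})(u)$ directly: approaching $\bar{x}$ along a $C^1$ curve in $\mathcal{M}$ tangent to $u$ makes $\delta_{\mathcal{M}}$ vanish identically along the curve, and the smooth restriction gives $d(f+\delta_{\mathcal{M}})(\bar{x})(u) = \langle \nabla_{\mathcal{M}} f(\bar{x}), u\rangle$ where $\nabla_{\mathcal{M}} f(\bar{x})$ is the Riemannian gradient of $f|_{\mathcal{M}}$; since $u$ is tangent to $\mathcal{M}$ this equals $\langle \tilde{v}, u\rangle$ for any vector $\tilde{v}$ whose tangential component is $\nabla_{\mathcal{M}} f(\bar x)$.

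The substantive step is to show $\langle \bar{v}, u\rangle = \langle \nabla_{\mathcal{M}} f(\bar{x}), u\rangle$ for $u\in T_{\mathcal{M}}(\bar{x})$, i.e.\ that the proximal subgradient $\bar{v}$ agrees with the Riemannian gradient of $f|_{\mathcal{M}}$ upon restriction to the tangent space, and simultaneously the lower bound $df(\bar{x})(u) \geq \langle \bar{v}, u\rangle$. For the lower bound I would invoke the proximal subgradient inequality: $\bar{v}\in\partial_p f(\bar{x})$ gives $f(x) \geq f(\bar{x}) + \langle \bar{v}, x - \bar{x}\rangle + O(|x-\bar{x}|^2)$, and plugging $x = \bar{x} + tu'$ with $u'\to u$, $t\searrow 0$ into the difference quotient yields $df(\bar{x})(u) \geq \langle \bar{v}, u\rangle$ for all $u$. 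Combined with the trivial direction and the computation above, this gives
$$\langle \bar{v}, u\rangle \leq df(\bar{x})(u) \leq d(f+\delta_{\mathcal{M}})(\bar{x})(u) = \langle \nabla_{\mathcal{M}} f(\bar{x}), u\rangle,$$
so it remains only to prove the reverse inequality $\langle \bar{v}, u\rangle \geq \langle \nabla_{\mathcal{M}} f(\bar{x}), u\rangle$ for $u\in T_{\mathcal{M}}(\bar{x})$. Since $T_{\mathcal{M}}(\bar{x})$ is a linear subspace, applying the displayed inequality to both $u$ and $-u$ forces $\langle \bar{v}, u\rangle = \langle \nabla_{\mathcal{M}} f(\bar{x}), u\rangle$ on $T_{\mathcal{M}}(\bar x)$, and the whole chain collapses to equalities. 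This symmetry trick is what makes the linear-subspace structure of the tangent space do the work, and I expect it to be the cleanest route; the one point requiring care is justifying that $d(f+\delta_{\mathcal{M}})(\bar{x})(u)$ really equals the directional derivative of the smooth restriction $f|_{\mathcal{M}}$ — this uses that $\mathcal{M}$ is locally a $C^1$ graph over $T_{\mathcal{M}}(\bar{x})$, so that a sequence realizing the liminf can be taken on $\mathcal{M}$ without loss of generality (off $\mathcal{M}$ the indicator is $+\infty$), and that along $\mathcal{M}$ the quotient converges to $\langle\nabla_{\mathcal{M}}f(\bar x),u\rangle$ by the chain rule.

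The main obstacle, such as it is, is bookkeeping the relationship between the ambient proximal subgradient $\bar v$ and the intrinsic gradient of $f|_{\mathcal M}$: one must be careful that $\partial_p f(\bar x)$ is taken with respect to the \emph{ambient} function, not the restriction, and confirm that the proximal inequality survives restriction to tangent directions with the right constant (it does, since the $O(|x-\bar x|^2)$ term is harmless after dividing by $t$ and letting $t\searrow 0$). No appeal to prox-regularity, sharpness, or the finite-identification property is actually needed for this first-order statement — only smoothness of $f$ along $\mathcal M$ and $\bar v\in\partial_p f(\bar x)$ — so the proof should be short.
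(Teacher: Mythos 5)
Your proof is correct, and it takes a genuinely different and more elementary route than the paper. The paper establishes $d(f+\delta_{\mathcal{M}})(\bar{x})(u)=\langle\bar{v},u\rangle$ the same way you do (via a $C^2$ extension $g$ of $f|_{\mathcal{M}}$ and the inclusion $\bar v\in\nabla g(\bar x)+N_{\mathcal M}(\bar x)$), but for the first equality $df(\bar{x})(u)=\langle\bar{v},u\rangle$ it invokes the sharpness property of partial smoothness ($\para\partial_p f(\bar x)=N_{\mathcal M}(\bar x)$), prox-regularity (to identify $\aff\partial_p f(\bar x)$ with $\aff\hat\partial f(\bar x)$), and the support-function description of $df(\bar x)$ via the Fr\'echet subdifferential. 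You replace all of that machinery by a squeeze: the proximal subgradient inequality gives $df(\bar x)(u)\geq\langle\bar v,u\rangle$ for every $u$, the pointwise bound $f\leq f+\delta_{\mathcal M}$ (with equality at $\bar x$) gives $df(\bar x)(u)\leq d(f+\delta_{\mathcal M})(\bar x)(u)=\langle\nabla g(\bar x),u\rangle$ on $T_{\mathcal M}(\bar x)$, and since $T_{\mathcal M}(\bar x)$ is a subspace, applying the chain to both $u$ and $-u$ forces every inequality to be an equality. Your justification of the one delicate step — that $d(f+\delta_{\mathcal M})(\bar x)(u)$ equals the derivative of the smooth restriction along tangent directions (feasible sequences must lie on $\mathcal M$, where $f$ agrees with the $C^1$ extension, plus existence of a tangent curve) — is adequate. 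As you observe, your argument uses only $\bar v\in\partial_p f(\bar x)$ and smoothness of $f$ along the manifold, so it actually proves the statement under weaker hypotheses; what the paper's heavier route buys is the identity $\aff\partial_p f(\bar x)=\nabla g(\bar x)+N_{\mathcal M}(\bar x)$, which is reused immediately afterwards in the proof of the critical-cone result (Theorem~\ref{thm:critman2}), whereas your proof does not produce that byproduct (nor needs it here).
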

\begin{proof}
Let $g\colon\R^n\to\R$ be a $C^2$-smooth function coinciding with $f$ on $\mathcal{M}$ near $\bar{x}$. Standard subdifferential calculus implies
$$\partial_p f(\bar{x})\subset \partial_p (f+\delta_{\mathcal{M}})(\bar{x})=\partial_p (g+\delta_{\mathcal{M}})(\bar{x})=\nabla g(\bar{x})+N_{\mathcal{M}}(\bar{x}).$$
Moreover, one can easily verify $d(g+\delta_{\mathcal{M}})(\bar{x})(u)=\langle\nabla g(\bar{x}),u\rangle$ for any $u\in T_{\mathcal{M}}(\bar{x})$. Since  by the chain of inclusions above $\bar{v}$ lies in $\nabla g(\bar{x})+N_{\mathcal{M}}(\bar{x})$, we deduce $$d(f+\delta_{\mathcal{M}})(\bar{x})(u)=d(g+\delta_{\mathcal{M}})(\bar{x})(u)=\langle\nabla g(\bar{x}),u \rangle=\langle\bar{v},u\rangle.$$
Now since identifiable manifolds are partly smooth, we have $\para \partial_p f(\bar{x})=N_{\mathcal{M}}(\bar{x})$. Consequently
we deduce
$$\aff \partial_p f(\bar{x}) =\nabla g(\bar{x})+N_{\mathcal{M}}(\bar{x}).$$
In particular, for any $u\in T_{\mathcal{M}}(\bar{x})$ we have equality $\langle \aff \partial_p f(\bar{x}), u\rangle=\langle \nabla g(\bar{x}),u\rangle=\langle \bar{v},u\rangle$. On the other hand $df(\bar{x})$ is the support function of the Fr\'{e}chet subdifferential $\hat{\partial} f(\bar{x})$ (see \cite[Excercise 8.4]{VA}), and since $f$ is prox-regular at $\bar{x}$ for $\bar{v}$, we have $\aff \partial_p f(\bar{x})=\aff \hat{\partial} f(\bar{x})$. 
We conclude $df(\bar{x})(u)=\langle  \bar{v},u\rangle$, as claimed. 
\end{proof}

As a direct consequence, we deduce that critical cones are simply tangent spaces to identifiable manifolds, when the latter exist. The following is an extension of \cite[Proposition 6.4]{dim}.

\begin{thm}[Critical cones and identifiable manifolds]\label{thm:critman2}
Consider an lsc function $f\colon\R^n\to\overline{\R}$ and suppose that $f$ admits a $C^2$ identifiable manifold $\mathcal{M}$ at a point $\bar{x}$ for $\bar{v}\in \partial_p f(\bar{x})$. Then the critical cone coincides with the tangent space
$$C_f(\bar{x},\bar{v})=T_{\mathcal{M}}(\bar{x}).$$
\end{thm}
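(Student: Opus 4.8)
The plan is to deduce the equality $C_f(\bar{x},\bar{v})=T_{\mathcal{M}}(\bar{x})$ from Theorem~\ref{thm:critman} by a two-sided inclusion argument. Recall that by definition $C_f(\bar{x},\bar{v})=\{u\in\R^n: \langle\bar{v},u\rangle=df(\bar{x})(u)\}$, so the content of the claim is that this set is exactly the tangent space of the identifiable manifold.

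First I would establish the inclusion $T_{\mathcal{M}}(\bar{x})\subseteq C_f(\bar{x},\bar{v})$. This is immediate from Theorem~\ref{thm:critman}: for any $u\in T_{\mathcal{M}}(\bar{x})$ we have $df(\bar{x})(u)=\langle\bar{v},u\rangle$, which is precisely the defining relation for membership in $C_f(\bar{x},\bar{v})$.

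The reverse inclusion $C_f(\bar{x},\bar{v})\subseteq T_{\mathcal{M}}(\bar{x})$ is the substantive direction, and I expect it to be the main obstacle. The idea is to show that any direction $u\notin T_{\mathcal{M}}(\bar{x})$ fails the critical-cone equality, i.e.\ $df(\bar{x})(u)>\langle\bar{v},u\rangle$ (the inequality $\geq$ always holds since $df(\bar{x})$ is the support function of $\hat\partial f(\bar{x})$, which contains $\bar v$ by prox-regularity — note $\bar v\in\partial_p f(\bar x)\subseteq\hat\partial f(\bar x)$). Decompose $u=u_T+u_N$ with $u_T\in T_{\mathcal{M}}(\bar{x})$ and $0\neq u_N\in N_{\mathcal{M}}(\bar{x})$. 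Here I would invoke the sharpness property of partial smoothness: since identifiable manifolds are partly smooth (as noted in the excerpt), we have $\para\,\partial_p f(\bar{x})=N_{\mathcal{M}}(\bar{x})$, and moreover one knows that $\partial_p f(\bar x)$ is a full-dimensional (relative to its affine hull $\nabla g(\bar x)+N_{\mathcal{M}}(\bar x)$) set with $\bar v\in\ri\,\partial_p f(\bar x)$. Consequently $df(\bar{x})(u)=\sigma_{\hat\partial f(\bar{x})}(u)$, and since $\aff\hat\partial f(\bar x)=\aff\partial_p f(\bar x)=\nabla g(\bar x)+N_{\mathcal{M}}(\bar x)$ while $\bar v$ lies in the relative interior of $\partial_p f(\bar x)$, any direction $u$ not orthogonal to $N_{\mathcal{M}}(\bar x)$, i.e.\ with $u_N\neq 0$, yields a strictly larger value of the support function than the value $\langle\bar v,u\rangle$ achieved at the relative-interior point $\bar v$. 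This forces $df(\bar{x})(u)>\langle\bar{v},u\rangle$, so $u\notin C_f(\bar{x},\bar{v})$, completing the reverse inclusion.

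The delicate point, and where I would be most careful, is justifying that $\bar v$ being in the relative interior of $\partial_p f(\bar x)$ (with affine hull a translate of $N_{\mathcal{M}}(\bar x)$) really does give the strict inequality $\sigma_{\hat\partial f(\bar x)}(u)>\langle\bar v,u\rangle$ whenever $u$ has a nonzero component in $N_{\mathcal{M}}(\bar x)$; this is a standard fact about support functions of convex sets evaluated at relative-interior points, but one must handle the possibility that $\hat\partial f(\bar x)$ is strictly larger than $\partial_p f(\bar x)$ (they share the same affine hull by prox-regularity, which is exactly what is needed). Once both inclusions are in hand, the equality $C_f(\bar{x},\bar{v})=T_{\mathcal{M}}(\bar{x})$ follows, and I would note this sharpens \cite[Proposition~6.4]{dim} as advertised.
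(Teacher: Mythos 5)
Your proof is correct and takes essentially the same route as the paper: the inclusion $T_{\mathcal{M}}(\bar{x})\subseteq C_f(\bar{x},\bar{v})$ comes from Theorem~\ref{thm:critman}, and the reverse inclusion rests on the same geometry the paper uses --- directions in $C_f(\bar{x},\bar{v})$ must support the subdifferential at $\bar{v}$, while $\bar{v}\in\ri\partial_p f(\bar{x})$ together with $\para\partial_p f(\bar{x})=N_{\mathcal{M}}(\bar{x})$ (the paper instead quotes the local identity $\partial_p f(\bar{x})=\nabla g(\bar{x})+N_{\mathcal{M}}(\bar{x})$ near $\bar{v}$ via Theorem~\ref{thm:basic_prop_ident} and concludes $u\in N_{\partial_p f(\bar{x})}(\bar{v})$) forces any direction with nonzero $N_{\mathcal{M}}(\bar{x})$-component to give $df(\bar{x})(u)>\langle\bar{v},u\rangle$. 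Your flagged ``delicate point'' is in fact harmless: for your contrapositive you only need $df(\bar{x})(u)\geq\langle v',u\rangle$ for $v'\in\partial_p f(\bar{x})$, which holds since proximal subgradients are regular subgradients, so no comparison of $\hat{\partial} f(\bar{x})$ with $\partial_p f(\bar{x})$ beyond the trivial inclusion is required.
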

\begin{proof} 
The inclusion $C_f(\bar{x},\bar{v})\supset T_{\mathcal{M}}(\bar{x})$ is immediate from Theorem~\ref{thm:critman}. Conversely, consider a vector $u\in C_f(\bar{x},\bar{v})$. Since $df(\bar{x})$ is the support function of the Fr\'{e}chet subdifferential $\hat{\partial} f(\bar{x})$ (see \cite[Exercise 8.4]{VA}) and by prox-regularity the subdifferentials $\partial_p f(\bar{x})$ and $\hat{\partial} f(\bar{x})$ coincide near $\bar{v}$, we deduce that $u$ lies in 
$N_{\partial_p f(\bar{x})}(\bar{v})$. On the other hand by Theorem~\ref{thm:basic_prop_ident}, locally near $\bar{v}$, we have equality  $$\partial_p f(\bar{x})=\partial_p (f+\delta_{\mathcal{M}})(\bar{x})=\nabla g(\bar{x})+N_{\mathcal{M}}(\bar{x}),$$
where $g$ is any $C^2$ smooth function agreeing with $f$ on $\mathcal{M}$ near $\bar{x}$. Consequently $u$ lies in $T_{\mathcal{M}}(\bar{x})$, as claimed.
\end{proof}

Next we need set analogues of subderivatives -- first-order and second-order tangent sets. These are obtained by applying the subderivative concepts to the indicator function. More concretely we have the following.

\begin{defn}[First order and second order tangent sets]
{\rm
Consider a set $\Omega\subset\R^n$ and a point $\bar{x}\in \Omega$. Then the {\em tangent cone} to $\Omega$ at $\bar{x}$ is the set
$$T_{\Omega}(\bar{x}):=\{u: \exists t_i\downarrow 0 \textrm{ and } u_i\to u \quad \textrm{ such that }\quad \bar{x}+t_iu_i\in \Omega\},$$
while the {\em critical cone of} $\Omega$ {\em at} $\bar{x}$ {\em for} $\bar{v}$ is defined by
$$C_{\Omega}(\bar{x},\bar{v}):=T_{\Omega}(\bar{x})\cap \bar{v}^{\perp}.$$
The {\em second-order tangent set to} $\Omega$ {\em at} $\bar{x}$ {\em for} $\bar{u}\in T_{\Omega}(\bar{x})$ is the set 
$$T^2_{\Omega}(\bar{x}|\bar{u}):=\{w: \exists t_i\downarrow 0  \textrm{ and } w_i\to w \quad\textrm{ such that } \quad \bar{x}+t_i \bar{u} +\frac{1}{2}t^2_i w_i\in \Omega\}.$$
}
\end{defn}

\smallskip
\noindent One can now easily verify the relationships:
$$T_{\Omega}(\bar{x})=\dom d\delta_Q(\bar{x}),\qquad C_{\Omega}(\bar{x},\bar{v})=C_{\delta_Q}(\bar{x},\bar{v}), \qquad T^2_{\Omega}(\bar{x}|\bar{u})=\dom d^2\delta_{\Omega}(\bar{x})(\bar{u}|\cdot).$$
Next we record an important relationship between projections and identifiable manifolds \cite[Proposition 4.5]{shanshan}. Naturally, we say that a set $\mathcal{M}$ is a $C^p$ {\em identifiable manifold relative to a set} $Q$ {\em at} $\bar{x}$ {\em for} $\bar{v}\in N_Q(\bar{x})$ whenever $\mathcal{M}$ is a $C^p$ identifiable manifold relative to the indicator function $\delta_Q$ at $\bar{x}$ for $\bar{v}\in \partial\delta_Q(\bar{x})$.
\begin{prop}[Projections and identifiability]\label{prop:prox_ident}
Consider a closed set $Q\subset\R^n$ and suppose that $\mathcal{M}$ is a $C^p$-identifiable manifold ($p\geq 2$) at $\bar{x}$ for $\bar{v}\in N^p_Q(\bar{x})$. Then for all sufficiently small $\lambda >0$, the projections $P_Q$ and $P_{\mathcal{M}}$ coincide on a neighborhood of $\bar{x}+\lambda\bar{v}$ and are $C^{p-1}$-smooth there.
\end{prop}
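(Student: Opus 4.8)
The plan is to reduce the statement for the set $Q$ to the already-established correspondence between identifiable manifolds and proximal behavior, and then to exploit the smoothness of the projection onto a $C^p$ manifold. First I would record that since $\mathcal{M}$ is a $C^p$-identifiable manifold for $Q$ at $\bar{x}$ for $\bar{v}\in N^p_Q(\bar{x})$, it is in particular partly smooth there, so by Theorem~\ref{thm:basic_prop_ident} (applied to $f=\delta_Q$) the graphs of $N_Q$ and $N_{\mathcal{M}}$ agree on a $\delta_Q$-attentive neighborhood of $(\bar{x},\bar{v})$; since $\delta_Q$-attentive convergence to a point of $Q$ is just ordinary convergence within $Q$, this says $N_Q$ and $N_{\mathcal{M}}$ coincide on an ordinary neighborhood of $(\bar{x},\bar{v})$ in $\R^n\times\R^n$. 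Equivalently, $Q$ and $\mathcal{M}$ have the same proximal normal cones near $\bar{x}$ in directions near $\bar{v}$.

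Next I would invoke the classical fact that the projection onto a $C^p$ manifold is single-valued and $C^{p-1}$-smooth on a tube around the manifold: there is a neighborhood $W$ of $\bar{x}$ and $\lambda_0>0$ so that $P_{\mathcal{M}}$ is well-defined and $C^{p-1}$ on $\{x+\lambda w : x\in\mathcal{M}\cap W,\ w\in N_{\mathcal{M}}(x),\ |w|=1,\ 0<\lambda<\lambda_0\}$ — this is standard tubular-neighborhood theory (e.g. \cite{lee}, or the normal-bundle exponential map). In particular, for all sufficiently small $\lambda>0$ the point $\bar{x}+\lambda\bar{v}$ has a neighborhood $U_\lambda$ on which $P_{\mathcal{M}}$ is single-valued and $C^{p-1}$, with $P_{\mathcal{M}}(\bar{x}+\lambda\bar{v})=\bar{x}$.

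The heart of the argument is then to show $P_Q=P_{\mathcal{M}}$ near $\bar{x}+\lambda\bar{v}$ for small $\lambda$. The inclusion $Q\supseteq\mathcal{M}$ near $\bar{x}$ gives $\dist(z,Q)\le\dist(z,\mathcal{M})$, so I must rule out $Q$ providing a strictly closer point. Fix $z$ near $\bar{x}+\lambda\bar{v}$ and let $q\in P_Q(z)$; then $z-q$ is a proximal normal to $Q$ at $q$, i.e. $q+t(z-q)$ has nearest point $q$ in $Q$ for small $t>0$, hence $(z-q)\in N^p_Q(q)$, and $q$ is near $\bar{x}$ while the direction $z-q$ is near $\bar{v}$ (here I use that $P_Q(z)\to\bar{x}$ as $z\to\bar{x}+\lambda\bar{v}$, which follows from outer-semicontinuity of $P_Q$ together with $\dist(z,Q)\le\dist(z,\mathcal{M})\to\lambda|\bar v|$ being attained near $\bar x$). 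By the normal-cone coincidence from the first paragraph, $(z-q)\in N_{\mathcal{M}}(q)$, so $q$ lies in $\mathcal{M}$ and $q$ is a critical point of $y\mapsto\tfrac12|z-y|^2$ on $\mathcal{M}$; since $z$ is in the $C^{p-1}$-tube of $\mathcal{M}$, such a critical point is unique and equals $P_{\mathcal{M}}(z)$. Therefore $P_Q(z)=\{P_{\mathcal{M}}(z)\}$, and $P_Q$ inherits the $C^{p-1}$-smoothness of $P_{\mathcal{M}}$ on a neighborhood of $\bar{x}+\lambda\bar{v}$.

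I expect the main obstacle to be the bookkeeping in the last paragraph: one must verify carefully that, as $z\to\bar{x}+\lambda\bar{v}$, every choice of $q\in P_Q(z)$ really does stay near $\bar{x}$ and the direction $z-q$ really does stay near $\bar{v}$, so that the local coincidence of normal cones applies — this uses that $\bar v\in N^p_Q(\bar x)$ together with an elementary but slightly delicate semicontinuity argument, and it is where the smallness of $\lambda$ is genuinely needed (for large $\lambda$ there could be spurious far-away nearest points in $Q$). The tubular-neighborhood input and the reduction via Theorem~\ref{thm:basic_prop_ident} are otherwise routine.
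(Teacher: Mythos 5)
The paper itself offers no proof of this proposition --- it is imported verbatim from \cite[Proposition 4.5]{shanshan} --- so there is no internal argument to compare with; judged on its own, your proof is essentially sound and would serve as a self-contained substitute. Your three-step structure is the natural one: (i) use Theorem~\ref{thm:basic_prop_ident} with $f=\delta_Q$ to see that $\gph N_Q$ and $\gph N_{\mathcal{M}}$ coincide near $(\bar{x},\bar{v})$ (note the theorem is stated only for $\bar{v}=0$, so you should say you apply it to the tilted function $\delta_Q-\langle\bar{v},\cdot\rangle$, and that $\partial(\delta_Q+\delta_{\mathcal{M}})=N_{\mathcal{M}}$ locally because $\mathcal{M}\subset Q$ near $\bar{x}$); (ii) the tubular-neighborhood regularity of $P_{\mathcal{M}}$; (iii) showing every nearest point in $Q$ is forced into $\mathcal{M}$ and then into agreement with $P_{\mathcal{M}}$.

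Two places you flag but do not actually carry out deserve to be written down, since they are exactly where $\bar{v}\in N^p_Q(\bar{x})$ and the smallness of $\lambda$ enter. First, your localization of $q\in P_Q(z)$ rests on $P_Q(\bar{x}+\lambda\bar{v})=\{\bar{x}\}$ for all small $\lambda>0$; the phrase ``attained near $\bar x$'' does not give this. It does follow from proximality: the inequality $\langle\bar{v},x-\bar{x}\rangle\le\sigma|x-\bar{x}|^2$ valid for $x\in Q$ near $\bar{x}$ upgrades to a global one (for $|x-\bar{x}|\ge r$ use $\langle\bar{v},x-\bar{x}\rangle\le(|\bar{v}|/r)|x-\bar{x}|^2$), and completing the square then shows $\bar{x}$ is the unique global nearest point of $Q$ to $\bar{x}+\lambda\bar{v}$ whenever $\lambda<1/(2\sigma)$. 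Second, the coincidence of normal-cone graphs is on a neighborhood of $(\bar{x},\bar{v})$, whereas $z-q\approx\lambda\bar{v}$; you must rescale to $\lambda^{-1}(z-q)\in N^p_Q(q)$ (harmless, the cone is a cone), and then the order of quantifiers matters: fix $\lambda$ small first, and shrink the neighborhood of $\bar{x}+\lambda\bar{v}$ depending on $\lambda$, so that outer semicontinuity together with the single-valuedness just established gives $|q-\bar{x}|=o(\lambda)$ and hence $\lambda^{-1}(z-q)$ close to $\bar{v}$; the crude bound $|q-\bar x|\lesssim 2\lambda|\bar v|$ alone is not enough. Finally, the uniqueness of the critical point of $y\mapsto\tfrac12|z-y|^2$ on $\mathcal{M}$ inside the tube is the injectivity of the normal-bundle map at small scale (local positive reach of a $C^2$ manifold); a one-line appeal to that fact would close the argument. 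With these points made explicit, the proof is correct.
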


\begin{prop}[Second-order tangents to sets with identifiable structure]\label{prop:tan_id}
Suppose that a closed set $Q\subset \R^n$ admits an identifiable $C^3$ manifold at $\bar{x}$ for $\bar{v}\in N^p_Q(\bar{x})$. Consider a nonzero tangent $\bar{u}\in T_{\mathcal{M}}(\bar{x})$ and a vector $\bar{w}\in T^2_{Q}(\bar{x}|\bar{u})$. Then for any real $\epsilon >0$, there exists $\hat{u}\in T_{\mathcal{M}}(\bar{x})$ and $\hat{w}\in T^2_{\mathcal{M}}(\bar{x}|\hat{u})$ satisfying 
$$|\bar{u}-\hat{u}|\leq \epsilon \qquad \textrm{ and }\qquad \langle \bar{v},\hat{w}\rangle\geq \langle \bar{v},\bar{w}\rangle.$$
\end{prop}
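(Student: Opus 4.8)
My plan is to pin everything down via the nearest-point projection $P_{\mathcal{M}}$ onto the identifiable manifold, leaning on Proposition~\ref{prop:prox_ident}: for all small $\lambda>0$ the projections $P_Q$ and $P_{\mathcal{M}}$ agree and are $C^2$-smooth near $\bar{x}+\lambda\bar{v}$. Two preliminary observations. First, since $\bar{v}\in N^p_Q(\bar{x})$ the point $\bar{x}$ is the unique nearest point of $Q$ to $\bar{x}+\lambda\bar{v}$ for small $\lambda$, so Proposition~\ref{prop:prox_ident} gives $P_{\mathcal{M}}(\bar{x}+\lambda\bar{v})=\bar{x}$, hence $\bar{v}\in N_{\mathcal{M}}(\bar{x})$; in particular $\langle\bar{v},\bar{u}\rangle=0$. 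Second, since $\mathcal{M}$ is a $C^2$ manifold, $T^2_{\mathcal{M}}(\bar{x}|\bar{u})$ is an affine set parallel to $T_{\mathcal{M}}(\bar{x})$, so $\langle\bar{v},w\rangle$ takes one and the same value for every $w\in T^2_{\mathcal{M}}(\bar{x}|\bar{u})$; thus it suffices to produce a single such $w$ with $\langle\bar{v},w\rangle\ge\langle\bar{v},\bar{w}\rangle$, and we may take $\hat u=\bar u$ (the $\epsilon$-slack turns out to be unnecessary).

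The core of the argument is a curvature estimate for $Q$ obtained by convexity. Fix small $\lambda>0$, put $\bar{y}:=\bar{x}+\lambda\bar{v}$, and set
$$F(z):=\tfrac12|z-\bar{y}|^2-\tfrac12\,\dist(z,Q)^2=\sup_{q\in Q}\Big[\tfrac12|z-\bar{y}|^2-\tfrac12|z-q|^2\Big].$$
As a supremum of affine functions $F$ is convex on $\R^n$, and near $\bar{y}$ it is $C^2$ (there $\dist(\cdot,Q)^2=|\cdot-P_Q(\cdot)|^2$, as $P_Q=P_{\mathcal{M}}$ is single-valued and $C^2$), with $\nabla F(\bar{y})=P_Q(\bar{y})-\bar{y}=-\lambda\bar{v}$ and $\nabla^2F(\bar{y})=DP_Q(\bar{y})=DP_{\mathcal{M}}(\bar{y})$, by a standard computation using $P_Q=P_{\mathcal{M}}$. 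Pick $q_i=\bar{x}+t_i\bar{u}+\tfrac12 t_i^2w_i\in Q$ with $t_i\downarrow0$, $w_i\to\bar{w}$, witnessing $\bar{w}\in T^2_Q(\bar{x}|\bar{u})$, and put $k_i:=q_i-\bar{x}$, $z_i:=\bar{y}+k_i$. The affine minorant of $F$ from the term $q=q_i$ gives $F(z_i)\ge\tfrac12|k_i|^2-\tfrac12\lambda^2|\bar{v}|^2$, while the second-order Taylor expansion of $F$ at $\bar{y}$ gives $F(z_i)=-\tfrac12\lambda^2|\bar{v}|^2-\lambda\langle\bar{v},k_i\rangle+\tfrac12\langle DP_{\mathcal{M}}(\bar{y})k_i,k_i\rangle+o(|k_i|^2)$. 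Comparing these, substituting $k_i=t_i\bar{u}+\tfrac12 t_i^2w_i$, using $\langle\bar{v},\bar{u}\rangle=0$, dividing by $\tfrac12 t_i^2$ and letting $i\to\infty$ produces the estimate
$$\langle\bar{v},\bar{w}\rangle\ \le\ \frac1\lambda\big\langle(DP_{\mathcal{M}}(\bar{y})-I)\bar{u},\bar{u}\big\rangle\ =:\ R(\lambda)\qquad\text{for all small }\lambda>0.$$

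It remains to let $\lambda\downarrow0$ and recognize $\lim R(\lambda)$ as a second-order tangent vector of $\mathcal{M}$. Since $\mathcal{M}$ is $C^3$, $P_{\mathcal{M}}$ is $C^2$ on a tubular neighborhood of $\mathcal{M}$; writing $A_\lambda:=DP_{\mathcal{M}}(\bar{y})$ and differentiating the orthogonality identity $\langle z-P_{\mathcal{M}}(z),DP_{\mathcal{M}}(z)\bar{u}\rangle\equiv0$ (valid near $\bar{y}$ because $DP_{\mathcal{M}}(z)\bar{u}\in T_{P_{\mathcal{M}}(z)}\mathcal{M}\perp z-P_{\mathcal{M}}(z)$) at $\bar{y}$ in direction $\bar{u}$, using $\bar{y}-P_{\mathcal{M}}(\bar{y})=\lambda\bar{v}$, one obtains
$$R(\lambda)=\big\langle\bar{v},\,D^2P_{\mathcal{M}}(\bar{y})[\bar{u},\bar{u}]\big\rangle-\frac1\lambda\big|(A_\lambda-I)\bar{u}\big|^2 .$$
As $\lambda\downarrow0$ we have $A_\lambda\to DP_{\mathcal{M}}(\bar{x})=P_T$, the orthogonal projection onto $T_{\mathcal{M}}(\bar{x})$, and $P_T\bar{u}=\bar{u}$; since the convergence is at the $C^1$ rate, $|(A_\lambda-I)\bar{u}|=|(A_\lambda-P_T)\bar{u}|=O(\lambda)$, so $R(\lambda)\to\langle\bar{v},D^2P_{\mathcal{M}}(\bar{x})[\bar{u},\bar{u}]\rangle$. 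Combined with the estimate above, $\langle\bar{v},\bar{w}\rangle\le\langle\bar{v},D^2P_{\mathcal{M}}(\bar{x})[\bar{u},\bar{u}]\rangle$. Finally the $C^2$ curve $c(t):=P_{\mathcal{M}}(\bar{x}+t\bar{u})$ lies on $\mathcal{M}$ and satisfies $c(0)=\bar{x}$, $c'(0)=\bar{u}$, $c''(0)=D^2P_{\mathcal{M}}(\bar{x})[\bar{u},\bar{u}]$, so $w:=D^2P_{\mathcal{M}}(\bar{x})[\bar{u},\bar{u}]\in T^2_{\mathcal{M}}(\bar{x}|\bar{u})$ and $\langle\bar{v},\bar{w}\rangle\le\langle\bar{v},w\rangle$; taking $\hat u=\bar u$, $\hat w=w$ finishes the proof.

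The step I expect to be most delicate is the last one. The squared distances $\dist(\cdot,Q)^2$ and $\dist(\cdot,\mathcal{M})^2$ are $C^2$ only \emph{away} from $\bar{x}$, so one cannot simply differentiate at $\bar{x}$; instead one must pass to the limit $\lambda\downarrow0$ in the normalized second-order rate $R(\lambda)$ and identify it with a genuine second-order tangent vector of $\mathcal{M}$, which is where the regularity theory of $P_{\mathcal{M}}$ (its $C^2$-smoothness on a tubular neighborhood, the orthogonality identity, and the relevant Taylor expansions) is used. The convexity representation of $\dist(\cdot,Q)^2$ in the middle step is the device that keeps that estimate clean even though $Q$ is merely a closed set.
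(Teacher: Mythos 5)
Your proof is correct, and it reaches a slightly stronger conclusion than the statement asks for: you obtain $\hat{u}=\bar{u}$ exactly, whereas the paper needs the $\epsilon$-slack. The underlying geometric input is the same — Proposition~\ref{prop:prox_ident}, i.e.\ the coincidence and $C^2$-smoothness of $P_Q$ and $P_{\mathcal{M}}$ near $\bar{x}+\lambda\bar{v}$, together with a proximal (nearest-point) inequality — but the execution differs. The paper argues directly on the given parabolic sequence: it sets $z_i:=P_Q(x_i+r\bar{v})$, reads off from smoothness of the projection a parabola $z_i=\bar{x}+t_i\hat{u}+\tfrac12 t_i^2\hat{w}_i$ in $\mathcal{M}$ with $\hat{u}=\nabla P_{\mathcal{M}}(\bar{x}+r\bar{v})\bar{u}$ (whence only $|\hat{u}-\bar{u}|<\epsilon$ after shrinking $r$), and compares via $|x_i-z_i+r\bar{v}|\le r|\bar{v}|$, which yields $\langle\bar{v},z_i-x_i\rangle\ge 0$ and then the desired inequality by matching second-order coefficients. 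You instead Taylor-expand the convex Asplund-type function $\tfrac12|\cdot-\bar{y}|^2-\tfrac12\dist(\cdot,Q)^2$ at $\bar{y}=\bar{x}+\lambda\bar{v}$ along the translated parabolic points, extract the curvature bound $\langle\bar{v},\bar{w}\rangle\le\frac1\lambda\langle(\nabla P_{\mathcal{M}}(\bar{y})-I)\bar{u},\bar{u}\rangle$, and then pass $\lambda\downarrow 0$, identifying the limit as $\langle\bar{v},\nabla^2 P_{\mathcal{M}}(\bar{x})[\bar{u},\bar{u}]\rangle$ with $\nabla^2 P_{\mathcal{M}}(\bar{x})[\bar{u},\bar{u}]\in T^2_{\mathcal{M}}(\bar{x}|\bar{u})$ realized by the curve $t\mapsto P_{\mathcal{M}}(\bar{x}+t\bar{u})$. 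This costs you an extra limiting step and the use of $C^2$-smoothness of $P_{\mathcal{M}}$ on a full tubular neighborhood containing $\bar{x}$ itself (legitimate, since $\mathcal{M}$ is $C^3$; the paper implicitly uses the same regularity when invoking $\nabla P_{\mathcal{M}}(\bar{x})=$ projection onto $T_{\mathcal{M}}(\bar{x})$), but it buys exactness in $\hat{u}$; note also that the sign of the term $-\frac1\lambda|(\nabla P_{\mathcal{M}}(\bar{y})-I)\bar{u}|^2$ already works in your favor, so the $O(\lambda)$ rate estimate, while correct, is not even needed — continuity of $\nabla^2 P_{\mathcal{M}}$ at $\bar{x}$ suffices. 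Your side remark that $T^2_{\mathcal{M}}(\bar{x}|\bar{u})$ is an affine set parallel to $T_{\mathcal{M}}(\bar{x})$ is also correct, though your final construction makes it unnecessary.
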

\begin{proof}
By definition of $\bar{w}$, there exist numbers $t_i\downarrow 0$ and vectors $w_i\to \bar{w}$ so that the points $x_i:=\bar{x}+t_i\bar{u}+\frac{1}{2}t^2_i w_i$ lie in $\Omega$ for each $i$.
By Proposition~\ref{prop:prox_ident}, we may
choose $r >0$ satisfying $P_{Q}(\bar{x}+r \bar{v})= \bar{x}$, so that $P_Q$ coincides with $P_{\mathcal{M}}$ on a neighborhood of $\bar{x} +r \bar{v}$, and so that $P_Q$ is $C^2$-smooth on this neighborhood. Define now
$z_i=P_Q(x_i+r\bar{v})$. 
Since $P_Q$ is $C^2$-smooth on a neighborhood of $\bar{x}+rv$, we may write
$z_i=\bar{x}+t_i \hat{u} +\frac{1}{2}t_i^2\hat{w}_i$ for some $\hat{u}\in T_{\mathcal{M}}(\bar{x})$ and some $\hat{w}_i$ converging to a vector $\hat{w}\in T^2_{\mathcal{M}}(\bar{x}|\hat{u})$. It is standard that the derivative $\nabla P_{\mathcal{M}}(\bar{x})$ coincides with the linear projection onto the tangent space $T_{\mathcal{M}}(\bar{x})$, and hence decreasing $r$ we may ensure $|u-\hat{u}|<\epsilon$.
By definition of $z_i$ then we have the inequality
$$|x_i-z_i+r\bar{v}|\leq r|\bar{v}|,$$
and hence
$$\langle \bar{v},z_i-x_i\rangle\geq \frac{1}{2r}|x_i-z_i|^2\geq 0.$$
We deduce 
$$0\leq \langle \bar{v}, t_i(\hat{u}-\bar{u})+\frac{1}{2}t^2_i(\hat{w}_i-w_i)\rangle=\frac{1}{2}t^2_i\langle \bar{v},\hat{w}_i-\bar{w}_i\rangle.$$
Dividing by $\frac{1}{2}t_i^2$ and taking the limit the result follows.
\end{proof}

Finally, we arrive at the key relationship between the parabolic subderivative of a function and that of its restriction to an identifiable manifold.
\begin{cor}[Second-order subderivatives and identifiability]\label{cor:sec_order}
Suppose that an lsc function $f\colon\R^n\to\overline{\R}$ admits an identifiable $C^3$ manifold $\mathcal{M}$ at $\bar{x}$ for $0\in \partial_p f(\bar{x})$. Consider a nonzero vector $\bar{u}\in T_{\mathcal{M}}(\bar{x})$ and a vector $\bar{w}$. Then for any real $\epsilon >0$, there exists $\hat{u}\in T_{\mathcal{M}}(\bar{x})$ and $\hat{w}$ satisfying $|\bar{u}-\hat{u}|\leq \epsilon$ and 
$$d^2 f(\bar{x})(\bar{u}|\bar{w}) \geq d^2 (f+\delta_{\mathcal{M}})(\bar{x})(\hat{u}|\hat{w}).$$
\end{cor}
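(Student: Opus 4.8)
The plan is to reduce the statement about the parabolic subderivative $d^2 f$ to the statement about second-order tangent sets that we already proved in Proposition~\ref{prop:tan_id}, by working on the epigraph. First I would recall that the parabolic subderivative is nothing but a directional object read off the epigraph: for an lsc function $f$ with $0 \in \partial_p f(\bar x)$, one has the identity relating $d^2 f(\bar x)(\bar u \mid \cdot)$ to the second-order tangent set $T^2_{\epi f}\big((\bar x, f(\bar x)) \mid (\bar u, df(\bar x)(\bar u))\big)$; this is standard (see \cite[Example~13.62]{VA}). Since $\bar u \in T_{\mathcal M}(\bar x)$, Theorem~\ref{thm:critman} gives $df(\bar x)(\bar u) = \langle 0, \bar u\rangle = 0$, so the relevant epigraphical direction is simply $(\bar u, 0)$, and $d^2 f(\bar x)(\bar u \mid \bar w)$ is governed by whether $(\bar w, \rho) \in T^2_{\epi f}((\bar x,f(\bar x)) \mid (\bar u, 0))$, the subderivative value being the infimal such $\rho$.

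The key step is to verify that the epigraph $\epi f$ admits an identifiable $C^3$ manifold at $(\bar x, f(\bar x))$ for the normal direction $(0, -1)$ (up to the usual normalization of epigraphical normals), namely the set $\{(x, g(x)) : x \in \mathcal M\}$, where $g$ is a $C^3$-smooth local representative of $f$ on $\mathcal M$ — note $\mathcal M$ is $C^3$ and $f|_{\mathcal M}$ is $C^3$ near $\bar x$, so this graph is a genuine $C^3$ manifold. Identifiability of this epigraphical manifold follows from identifiability of $\mathcal M$ for $f$ together with prox-regularity: $f$-attentive convergence $x_i \xrightarrow[f]{} \bar x$ with $v_i \to 0$, $v_i \in \partial f(x_i)$, is exactly the condition that forces $x_i \in \mathcal M$ eventually, and subgradients of $f$ correspond to normals to $\epi f$ of the form $(v, -1)$. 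Once this is in place, Proposition~\ref{prop:tan_id} applied to $Q = \epi f$ at $(\bar x, f(\bar x))$ for $(0,-1) \in N^p_{\epi f}$, with tangent direction $(\bar u, 0) \in T_{\epi f\text{-manifold}}$ and second-order tangent $(\bar w, \rho)$, produces a nearby direction $(\hat u, 0)$ in the tangent space of the epigraphical manifold — so $\hat u \in T_{\mathcal M}(\bar x)$ with $|\bar u - \hat u| \le \epsilon$ — and a second-order tangent $(\hat w, \hat\rho)$ to the epigraphical manifold with $\langle (0,-1), (\hat w, \hat\rho)\rangle \ge \langle (0,-1), (\bar w, \rho)\rangle$, i.e. $\hat\rho \le \rho$. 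Translating back: $(\hat w, \hat\rho)$ being a second-order tangent to $\gph(g|_{\mathcal M})$ means $\hat\rho$ is exactly the value $d^2(f + \delta_{\mathcal M})(\bar x)(\hat u \mid \hat w)$ (the $C^3$-smoothness of the manifold makes this second-order subderivative attained and finite along $T^2_{\mathcal M}$), while $\rho$ can be taken arbitrarily close to $d^2 f(\bar x)(\bar u \mid \bar w)$ by choosing $(\bar w, \rho)$ realizing the liminf defining the parabolic subderivative. Chaining the inequalities $d^2 f(\bar x)(\bar u \mid \bar w) \ge \rho - \epsilon' \ge \hat\rho - \epsilon' = d^2(f+\delta_{\mathcal M})(\bar x)(\hat u \mid \hat w) - \epsilon'$ and absorbing $\epsilon'$ into a fresh choice of $\epsilon$ gives the claim; one should be slightly careful with the case $d^2 f(\bar x)(\bar u \mid \bar w) = -\infty$ or $+\infty$, but the argument degenerates gracefully in both.

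The main obstacle I anticipate is bookkeeping the passage between the function-level parabolic subderivatives and the set-level second-order tangent sets on the epigraph — in particular making sure the epigraphical manifold $\{(x,g(x)) : x \in \mathcal M\}$ genuinely inherits identifiability (not merely partial smoothness) from $\mathcal M$, and handling the normalization of epigraphical normals $(v,-1)$ versus horizon normals $(v,0)$ so that the hypothesis $0 \in \partial_p f(\bar x)$ really does correspond to a proximal normal $(0,-1) \in N^p_{\epi f}$ of the required type. Once the dictionary between $d^2 f$, $d^2(f+\delta_{\mathcal M})$ and the corresponding second-order tangent sets is pinned down, the corollary is just Proposition~\ref{prop:tan_id} read through that dictionary.
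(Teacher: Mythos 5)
Your proposal follows essentially the same route as the paper's proof: pass to the epigraph, recognize $\gph(f+\delta_{\mathcal{M}})$ as a $C^3$ identifiable manifold of $\epi f$ at $(\bar{x},f(\bar{x}))$ for $(0,-1)$ (which the paper simply cites from \cite[Proposition~3.14]{ident} rather than re-deriving), translate parabolic subderivatives into second-order tangent sets via \cite[Example~13.62]{VA} together with Theorem~\ref{thm:critman}, apply Proposition~\ref{prop:tan_id}, and translate back. The argument is correct, so no further comparison is needed.
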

\begin{proof}
By \cite[Proposition 3.14]{ident}, the set $\mathcal{K}:=\gph (f+\delta_{\mathcal{M}})$ is a $C^3$ identifiable manifold relative to $\epi f$ at $(\bar{x},f(\bar{x}))$ for $(\bar{v},-1)$. Moreover by Theorem~\ref{thm:critman}, we have 
$$T_{\mathcal{K}}(\bar{x})=\{(u,\alpha): u\in T_{\mathcal{M}}(\bar{x}) \quad \textrm{ and }\quad \alpha=df(\bar{x})(u)\}.$$
Define $\bar{\beta} :=df(\bar{x})(\bar{u})$. Then
by \cite[Example 13.62]{VA}, equality
$$\epi d^2 f(\bar{x})(\bar{u}|\cdot)=T^2_{\sepi f}((\bar{x},f(\bar{x}))|(\bar{u},\bar{\beta})),$$
holds. Define $\bar{r}:=d^2 f(\bar{x})(\bar{u}|\bar{w})$.
Applying Proposition~\ref{prop:tan_id}, we deduce that
there exist $(\hat{u},\hat{\beta})\in T_{\mathcal{K}}(\bar{x},f(\bar{x}))$ and $(\hat{w},\hat{r})\in T^2_{\mathcal{K} }((\bar{x},f(\bar{x}))|(\hat{u},\hat{\beta}))$ satisfying 
$$|(\bar{u},\bar{\beta})-(\hat{u},\hat{\beta})|\leq \epsilon\quad \textrm{ and } \quad \langle (0,-1), (\hat{w},\hat{r}) \rangle \geq \langle (0,-1), (\bar{w},\bar{r})\rangle.$$
Clearly $\hat{\beta}=df(\bar{x})(\hat{u})$ and $\hat{r}=d^2 (f+\delta_{\mathcal{M}})(\bar{x})(\hat{u}|\hat{w})$.
We deduce
$$d^2 f(\bar{x})(\bar{u}|\bar{w}) \geq d^2 (f+\delta_{\mathcal{M}})(\bar{x})(\hat{u}|\hat{w}),$$
as claimed.
\end{proof}

We now arrive at the main result of this section.
\begin{thm}[Generic properties of semi-algebraic problems]\label{thm:final_uncon}
Consider an lsc, semi-algebraic function $f\colon\R^n\to\overline{\R}$. Then there exists an integer $N >0$ such that for a generic vector $v\in\R^n$ the function 
$$f_v(x):=f(x)-\langle v,x\rangle$$
has no more than $N$ critical points. Moreover each such critical point $\bar{x}$ satisfies 
\begin{enumerate}
\item {\bf (prox-regularity)} $f_v$ is prox-regular at $\bar{x}$ for $0$.
\item {\bf (strict complementarity)} The inclusion $0\in \ri\, \partial_p f_v(\bar{x})$ holds.
\item {\bf (identifiable manifold)} $f_v$ has an identifiable manifold $\mathcal{M}$ at $\bar{x}$ for $0$.
\item {\bf (smooth dependence of critical points)} The subdifferential $\partial f$ is strongly regular at $(\bar{x},v)$. More precisely, 
there exist neighborhoods $U$ of $\bar{x}$ and $V$ of $v$ so that the critical point mapping
$$w\mapsto U\cap (\partial f)^{-1}(w)=\{x\in U: x \textrm{ is critical for } f(\cdot)-\langle w,\cdot\rangle\}$$
is single-valued and analytic on $V$, and maps $V$ onto $\mathcal{M}$.
\end{enumerate}
Moreover the following are all equivalent
\begin{enumerate}[$(i)$]
\item\label{it_11} $\bar{x}$ is a local minimizer of $f_v$.
\item\label{it_2} $\bar{x}$ is a stable strong local minimizer of $f_v$.
\item\label{it_3} The inequality $$\inf_{w\in \R^n} d^2 f_v(\bar{x})(u|w)>0\qquad \textrm{ holds for all }\quad 0\neq u\in C_f(\bar{x},v).$$
\item\label{it_4} The inequality $$\inf_{w\in \R^n} d^2 (f_v+\delta_{\mathcal{M}})(\bar{x})(u|w)>0\qquad \textrm{ holds for all }\quad 0\neq u\in T_{\mathcal{M}}(\bar{x}).$$
\end{enumerate}
\end{thm}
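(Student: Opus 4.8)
The first part of the statement---generic finiteness of critical points together with properties 1--4---is exactly the content of Corollary~\ref{cor:basic2}, so the plan is to work with a generic $v$ for which all of those conclusions hold at every critical point $\bar x$ of $f_v$, and then establish the equivalences $(i)$--$(iv)$ at a fixed such $\bar x$. In particular $f_v$ is prox-regular at $\bar x$ for $0\in\partial_p f_v(\bar x)$, and $f_v$ admits a $C^{\omega}$ identifiable manifold $\mathcal{M}$ at $\bar x$ for $0$. Since $\partial f_v=\partial f-v$, this $\mathcal{M}$ is the identifiable manifold of $f$ at $\bar x$ for $v$, and $C_f(\bar x,v)=C_{f_v}(\bar x,0)$, which by Theorem~\ref{thm:critman2} equals $T_{\mathcal{M}}(\bar x)$; thus $(iii)$ and $(iv)$ range over the same set of directions. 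The implication $(ii)\Rightarrow(i)$ is immediate from the definition, and $(i)\Rightarrow(ii)$ is the final assertion of Corollary~\ref{cor:basic2} (it rests on the strong regularity of $\partial f$ at $(\bar x,v)$ recorded in item~4, together with \cite[Proposition~3.1, Corollary~3.2]{tilt}). It therefore remains to prove $(i)$--$(ii)\Leftrightarrow(iv)$ and $(iii)\Leftrightarrow(iv)$.

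For $(i)$--$(ii)\Leftrightarrow(iv)$ the idea is to transfer everything onto $f_v+\delta_{\mathcal{M}}$, which is essentially classical. Indeed, $\bar x$ is a local minimizer of $f_v$ iff it is one of $f_v+\delta_{\mathcal{M}}$, and by Theorem~\ref{thm:basic_prop_ident} it is a strong local minimizer of $f_v$ iff it is one of $f_v+\delta_{\mathcal{M}}$. Near $\bar x$ the function $f_v+\delta_{\mathcal{M}}$ coincides with $g_v+\delta_{\mathcal{M}}$ for a $C^{\omega}$ function $g_v$ agreeing with $f_v$ on $\mathcal{M}$, and the strict-complementarity inclusion $0\in\partial_p f_v(\bar x)=\nabla g_v(\bar x)+N_{\mathcal{M}}(\bar x)$ forces $\nabla g_v(\bar x)\in N_{\mathcal{M}}(\bar x)$. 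Unwinding the parabolic subderivative of this sum---the term $\langle\nabla g_v(\bar x),\cdot\rangle$ annihilating $T_{\mathcal{M}}(\bar x)$, and $T^2_{\mathcal{M}}(\bar x\,|\,u)$ being an affine translate of $T_{\mathcal{M}}(\bar x)$---one obtains, for every $u\in T_{\mathcal{M}}(\bar x)$,
$$\inf_{w\in\R^n} d^2(f_v+\delta_{\mathcal{M}})(\bar x)(u\,|\,w)=Q(u),$$
where $Q$ is the finite, continuous quadratic form on $T_{\mathcal{M}}(\bar x)$ given by the Riemannian Hessian of $g_v|_{\mathcal{M}}$ at $\bar x$; so $(iv)$ is precisely positive-definiteness of $Q$ on $T_{\mathcal{M}}(\bar x)\setminus\{0\}$, which by compactness is the bound $Q(u)\ge c|u|^2$ for some $c>0$. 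Finally, since $g_v|_{\mathcal{M}}$ has vanishing Riemannian gradient at $\bar x$, a Taylor expansion along $C^2$ curves in $\mathcal{M}$ shows that $\bar x$ is a strong local minimizer of $g_v|_{\mathcal{M}}$ iff $Q$ is positive definite; equivalently (via \cite[Theorems~13.24 and 13.66]{VA}, applicable because $f_v+\delta_{\mathcal{M}}$ is locally a $C^{\omega}$ function plus the indicator of a $C^{\omega}$ manifold) $\bar x$ is a strong local minimizer of $f_v+\delta_{\mathcal{M}}$ iff $(iv)$ holds. Chaining these equivalences closes the loop $(i)\Leftrightarrow(ii)\Leftrightarrow(iv)$.

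For $(iii)\Leftrightarrow(iv)$, one direction is soft: $f_v\le f_v+\delta_{\mathcal{M}}$ with equality at $\bar x$, and by Theorem~\ref{thm:critman} the first-order subderivatives in direction $u\in T_{\mathcal{M}}(\bar x)$ agree (both vanish), so directly from the definition $d^2 f_v(\bar x)(u\,|\,w)\le d^2(f_v+\delta_{\mathcal{M}})(\bar x)(u\,|\,w)$ for all $w$; taking infima and using $C_f(\bar x,v)=T_{\mathcal{M}}(\bar x)$ gives $(iii)\Rightarrow(iv)$. For the converse, assume $(iv)$, fix a nonzero $\bar u\in T_{\mathcal{M}}(\bar x)$ and an arbitrary $\bar w$, and apply Corollary~\ref{cor:sec_order} with $\epsilon=|\bar u|/2$ (legitimate, as the $C^{\omega}$ manifold $\mathcal{M}$ is in particular $C^3$): this yields $\hat u\in T_{\mathcal{M}}(\bar x)$ with $|\hat u-\bar u|\le|\bar u|/2$, hence $\hat u\ne 0$, and $\hat w$ with
$$d^2 f_v(\bar x)(\bar u\,|\,\bar w)\ \ge\ d^2(f_v+\delta_{\mathcal{M}})(\bar x)(\hat u\,|\,\hat w)\ \ge\ \inf_{w\in\R^n}d^2(f_v+\delta_{\mathcal{M}})(\bar x)(\hat u\,|\,w)=Q(\hat u)\ \ge\ c|\hat u|^2\ \ge\ \tfrac{c}{4}|\bar u|^2.$$
As $\bar w$ was arbitrary, $\inf_{w} d^2 f_v(\bar x)(\bar u\,|\,w)\ge\frac{c}{4}|\bar u|^2>0$, which is $(iii)$.

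The step I expect to be the main obstacle is the identity $\inf_w d^2(f_v+\delta_{\mathcal{M}})(\bar x)(u\,|\,w)=Q(u)$ with $Q$ a genuine continuous quadratic form on $T_{\mathcal{M}}(\bar x)$: this is what links the nonsmooth parabolic subderivative in $(iii)$--$(iv)$ to the classical Riemannian Hessian, it supplies the compactness bound $Q(u)\ge c|u|^2$ (without which the perturbative estimate from Corollary~\ref{cor:sec_order} would not close), and it is why $f_v+\delta_{\mathcal{M}}$ lies within the scope of the classical second-order calculus of \cite{VA} (see also \cite{Bon_Shap}). A secondary point requiring care is that the passage from the mere local minimality $(i)$ to the \emph{strict} inequality in $(iv)$ genuinely uses genericity---through the strong regularity of $\partial f$ at $(\bar x,v)$ and \cite{tilt}---since a real-analytic function restricted to a manifold may otherwise have a degenerate Hessian at a minimizer.
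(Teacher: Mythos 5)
Your proof is correct and follows essentially the same route as the paper's: Corollary~\ref{cor:basic2} for the generic properties and $(i)\Leftrightarrow(ii)$, Theorem~\ref{thm:basic_prop_ident} plus classical second-order theory on the manifold for $(ii)\Leftrightarrow(iv)$, Theorems~\ref{thm:critman} and \ref{thm:critman2} for $(iii)\Rightarrow(iv)$, and Corollary~\ref{cor:sec_order} for $(iv)\Rightarrow(iii)$. The only difference is that you spell out what the paper dismisses as ``classical reasons'' or ``immediate'' --- namely the identity $\inf_w d^2(f_v+\delta_{\mathcal{M}})(\bar x)(u\,|\,w)=Q(u)$ with $Q$ the Riemannian Hessian and the resulting uniform bound $Q(u)\geq c|u|^2$ needed to close $(iv)\Rightarrow(iii)$ --- which is a useful elaboration rather than a different approach.
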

\begin{proof}
In light of Corollary~\ref{cor:basic2}, we must only argue the claimed equivalence of the four properties. To this end, observe that for generic $v$, the equivalence $(\ref{it_11})\Leftrightarrow(\ref{it_2})$ was established in Corollary~\ref{cor:basic2}. On the other hand, Theorem~\ref{thm:basic_prop_ident} shows that $(\ref{it_2})$ is equivalent to $\bar{x}$ being a strong local minimizer of $f_v$ on $\mathcal{M}$, which in turn for classical reasons is equivalent to $(\ref{it_4})$. Note also that the implication $(\ref{it_3})\Rightarrow(\ref{it_4})$ is obvious from Theorem~\ref{thm:critman2}.
Thus we must only show the implication $(\ref{it_4})\Rightarrow(\ref{it_3})$, but this follows immediately from Corollary~\ref{cor:sec_order}.
\end{proof}

Note that property $(\ref{it_4})$ in the theorem above involves only classical analysis.

\section{Composite semi-algebraic optimization}\label{sec:Lag} 
In this section, we consider composite optimization problems of the form
$$\min f(x)+h(G(x)),$$
where $f\colon\R^n\to\overline{\R}$ and $h\colon\R^m\to\overline{\R}$ are lsc functions and $G\colon\R^n\to\R$ is $C^2$-smooth.
A prime example is the case of smoothly constrained optimization; this is the case where $h$ is the indicator function of a closed set.
We call a point $x\in\R^n$ {\em composite critical} for the problem if there exists a vector  
\begin{equation*}
\lambda\in \partial h(G(x))\quad \textrm{ satisfying }\quad
 -\nabla G(x)^*\lambda\in \partial f(x).
\end{equation*}
Whenever the optimality condition above holds, we call $\lambda$ a {\em Lagrange multiplier vector} and the tuple $(x,\lambda)$ a {\em composite critical pair}. The multiplier $\lambda$ is sure to be unique under the condition:
\begin{equation}\label{eqn:strong}
\para \partial h(G(x))\bigcap [\nabla G(x)^*]^{-1}\para \partial f(x)=\{0\}.
\end{equation}
Indeed, this is a direct analogue of the linear independence constraint qualification in nonlinear programming.

In general, the notion of composite criticality is different from criticality (as defined in the previous sections) for the function $f+h\circ G$. If $x$ is a critical point of $f+h\circ G$, then $x$ is composite critical only under some additional condition, such as the {\em basic constraint qualification}
\begin{equation}\label{eqn:bcq}
\partial^{\infty} h(G(x))\bigcap [\nabla G(x)^*]^{-1}\partial^{\infty} f(x)=\{0\}.
\end{equation}
This qualification condition is a generalization of the Mangasarian-Fromovitz constraint qualification in nonlinear programming and is in particular implied by \eqref{eqn:strong}; see the discussion in \cite{lagr} for more details. 
Conversely, if $x$ is a composite critical point and both $f$ and $h$ are subdifferentially regular \cite[Definition 7.25]{VA} (as is the case when $f$ and $h$ are convex), then $x$ is also a critical point of the function $f+h\circ G$.

In this section, we consider properties of composite critical points for generic composite semi-algebraic problems. To this end, we will assume that $f$, $G$, and $h$ are all semi-algebraic and we will consider the canonically perturbed problems:
$$\min f(x)+h(G(x)+y)-\langle v,x\rangle.$$
Then composite criticality is succinctly captured by the generalized equation
\begin{equation}\label{eqn:var_ineq}
\begin{bmatrix}
v \\
y
\end{bmatrix}\in\begin{bmatrix}
\nabla G(x)^*\lambda \\
-G(x)
\end{bmatrix}+ \Big(\partial f\times (\partial h)^{-1}\Big)(x,\lambda).
\end{equation}
The path to generic properties is now clear since the perturbation parameters $(v,y)$ appear in the range space of a semi-algebraic set-valued mapping having a small graph.

Before we proceed, we briefly recall that subderivatives admit a convenient calculus \cite[Exercise 13.63]{VA} for the composite problem.
In what follows, for any $C^2$-smooth mapping $G(x)=(g_1(x),\ldots, g_m(x))$ we use the notation
$$\nabla^2 G(x)[u,u]=\big(\langle\nabla^2 g_1(x)u,u\rangle,\ldots, \langle\nabla^2 g_m(x)u,u\rangle\big).$$

\begin{thm}[Calculus of subderivatives]\label{thm:calc} {\hfill \\ }
Consider a $C^2$-smooth mapping $G\colon\R^n\to\R^m$ and lsc functions $f\colon\R^n\to\overline{\R}$ and $h\colon\R^m\to\overline{\R}$. Suppose that a point $x$ satisfies the constraint qualification 
$$\partial^{\infty} h(G(x))\bigcap [\nabla G(x)^*]^{-1}\partial^{\infty} f(x)=\{0\}.$$
Then the equality 
$$d (f+h\circ G)(x)(u)=df(x)(u)+dh(G(x))(\nabla G(x)u))$$
holds.
Moreover for any $u$ with $d(f+h\circ G)(x)(u)$ finite, we have
$$d^2 (f+h\circ G)(x)(u|w)=d^2f(x)(u|w)+d^2h(G(x))\Big(\nabla G(x)u\,\Big|\, \nabla^2 G[u,u]+\nabla G(x)w\Big).$$
\end{thm}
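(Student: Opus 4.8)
The plan is to reduce the composite calculus rule to the known variational-analytic chain rules for subderivatives and parabolic subderivatives, treating the first-order formula as a warm-up and the second-order formula as the main content. Set $\varphi := f + h\circ G$, and write $K := h\circ G$ for the composite term. The strategy throughout is to realize $K$ as a partial composition and invoke \cite[Exercise 13.63]{VA} (or, more elementarily, Theorems 10.6 and 13.14 there), using the constraint qualification $\partial^\infty h(G(x)) \cap [\nabla G(x)^*]^{-1}\partial^\infty f(x) = \{0\}$ to guarantee that the chain rule holds with equality rather than mere inequality.

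First I would dispose of the first-order formula. The map $G$ is $C^2$, hence strictly differentiable, so the standard subderivative chain rule gives $d(h\circ G)(x)(u) = dh(G(x))(\nabla G(x)u)$ with no qualification needed (this is essentially \cite[Theorem 10.49]{VA} together with the observation that strict differentiability makes the inner map ``amenable''). Then additivity $d\varphi(x)(u) = df(x)(u) + d(h\circ G)(x)(u)$ is exactly where the constraint qualification enters: the sum rule for subderivatives holds with equality precisely when the horizon subdifferentials of the two summands intersect only trivially after the appropriate linear pullback, which is the hypothesis as stated. This yields the first displayed equality.

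For the second-order formula the same two-step pattern applies but each step is more delicate. The parabolic subderivative of the composition $h\circ G$ is governed by the second-order chain rule: along a parabolic arc $x + tu + \tfrac12 t^2 w$, the image under $G$ is $G(x) + t\,\nabla G(x)u + \tfrac12 t^2(\nabla^2 G(x)[u,u] + \nabla G(x)w) + o(t^2)$ by a second-order Taylor expansion of each coordinate $g_i$, which is exactly why the effective ``parabolic direction'' for $h$ at $G(x)$ in primal direction $\nabla G(x)u$ is $\nabla^2 G(x)[u,u] + \nabla G(x)w$. Hence $d^2(h\circ G)(x)(u|w) = d^2 h(G(x))(\nabla G(x)u \,|\, \nabla^2 G(x)[u,u] + \nabla G(x)w)$; this is the content of the parabolic chain rule in \cite[Exercise 13.63]{VA}. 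Then the parabolic sum rule $d^2\varphi(x)(u|w) = d^2 f(x)(u|w) + d^2(h\circ G)(x)(u|w)$ again requires a qualification, and under the stated horizon-subdifferential condition (which one checks also controls the horizon behavior needed for the parabolic-level sum, since $\partial^\infty(h\circ G)(x) = \nabla G(x)^* \partial^\infty h(G(x))$ under the CQ) the sum rule holds with equality. Combining the two steps gives the claimed identity.

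The main obstacle I anticipate is bookkeeping around \emph{when} the chain and sum rules hold with equality rather than inequality: \cite[Exercise 13.63]{VA} is stated for the composition $h\circ G$ directly, but here we additionally have the extra summand $f$, so one must verify that the combined constraint qualification $\partial^\infty h(G(x)) \cap [\nabla G(x)^*]^{-1}\partial^\infty f(x) = \{0\}$ simultaneously licenses (a) the chain rule for $h\circ G$ and (b) the sum rule pairing $f$ with $h\circ G$. The cleanest route is to apply the general composite calculus of \cite[Exercise 13.63]{VA} to the single mapping $x \mapsto (x, G(x))$ composed with the separable outer function $(z_1,z_2)\mapsto f(z_1) + h(z_2)$, so that the CQ appears in exactly the stated form and only one invocation of the reference is needed; then the two displayed identities fall out by specializing that master formula to first and second order and using $\nabla^2$ of the augmented map, whose block structure produces precisely the $\nabla^2 G(x)[u,u]$ correction term. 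I expect the rest to be routine Taylor-expansion verification.
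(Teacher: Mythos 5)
The paper does not actually prove this statement: it is recalled verbatim as known calculus, with the citation \cite[Exercise 13.63]{VA} standing in for a proof. Your final suggestion --- apply that exercise once to the augmented mapping $x\mapsto (x,G(x))$ composed with the separable outer function $(z_1,z_2)\mapsto f(z_1)+h(z_2)$, noting that separability gives $\partial^\infty\bigl(f\oplus h\bigr)(x,G(x))=\partial^\infty f(x)\times\partial^\infty h(G(x))$ so that the exercise's constraint qualification becomes exactly the one stated --- is precisely the standard way to pass from the pure-composition form of the reference to the two-term form used here, and it is the reading of the citation the authors evidently intend. So in substance your proposal matches the paper's (implicit) route.

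Two cautions about the ``warm-up'' two-step version, which you should not present as routine. First, the claim that $d(h\circ G)(x)(u)=dh(G(x))(\nabla G(x)u)$ holds ``with no qualification needed'' is false in general: smoothness of $G$ only yields the inequality $d(h\circ G)(x)(u)\geq dh(G(x))(\nabla G(x)u)$, since the composite difference quotients access only the special perturbed directions $\bigl(G(x+tu')-G(x)\bigr)/t$; equality needs the chain-rule qualification $\partial^\infty h(G(x))\cap\ker\nabla G(x)^*=\{0\}$, which does follow from the stated hypothesis (because $0\in\partial^\infty f(x)$), but must be invoked. Second, the sum rules for subderivatives and parabolic subderivatives likewise hold unconditionally only as inequalities, and upgrading them to equalities under the horizon condition is not an off-the-shelf statement one can simply quote; this is exactly why the one-shot augmented-map reduction is preferable, since it needs only a single invocation of \cite[Exercise 13.63]{VA}. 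Relatedly, your identity $\partial^{\infty}(h\circ G)(x)=\nabla G(x)^{*}\partial^{\infty}h(G(x))$ is in general only the inclusion $\subset$ under the qualification condition. None of these issues affects the recommended route, so the proposal is acceptable once the two-step sketch is either dropped or repaired along these lines.
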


We are now ready to prove the main result of this section.
Note that if for almost every $v$, a property is valid for almost every $y$ (with the $v$ fixed), then by Fubini's theorem the said property holds for almost every pair $(v,y)$. The same holds with $v$ and $y$ reversed. We will use this observation implicitly throughout.
\begin{thm}[Generic properties of composite optimization problems]\label{thm:main} Consider a $C^2$-smooth semi-algebraic mapping
$G\colon\R^n\to\R^m$ and lsc semi-algebraic functions $f\colon\R^n\to\overline{\R}$ and $h\colon\R^m\to\overline{\R}$. Define now the family of composite optimization problems $P(v,y)$  by
$$\min\, f_v(x)+h(G_y(x)),$$
under the perturbations $f_v(x):=f(x)-\langle v,x\rangle$ and $G_y(x)=G(x)+y$.
Then for almost every $y\in \R^m$, the qualification conditions 
\begin{align}
\spann \partial^{\infty} h(G_y(x))\bigcap [\nabla G(x)^*]^{-1}\spann \partial^{\infty} f_v(x)&=\{0\},\label{eqn:qual_cond1} \\
\para \partial h(G_y(x))\bigcap [\nabla G(x)^*]^{-1}\para \partial f_v(x)&\subseteq\{0\},\label{eqn:qual_cond2}
\end{align}
hold for any $x$, for which $f_v(x)$ and $h(G_y(x))$ are finite.
 Moreover there exists an integer $N >0$ such that for a generic collection of parameters $(v,y)\in\R^n\times\R^m$, the problem  $P(v,y)$ has at most $N$ composite critical points, and for any such composite critical point $\bar{x}$ of $P(v,y)$, there exist a unique Lagrange multiplier vector
 \begin{equation*}
 \bar{\lambda}\in \partial h(G_y(\bar{x}))\quad \textrm{ satisfying }\quad
  -\nabla G(\bar{x})^*\bar{\lambda}\in  \partial f_v(\bar{x}).
 \end{equation*}
Moreover, defining $\bar{w}:=-\nabla G(\bar{x})^*\bar{\lambda}$, the following are true. 
\begin{enumerate}
\item {\bf (prox-regularity)} $f_v$ is prox-regular at $\bar{x}$ for $\bar{w}$ and $h$ is prox-regular at $G_y(\bar{x})$ for $\bar{\lambda}$.
\item\label{it:strict_comp} {\bf (strict-complementarity)} The inclusions 
$$\bar{\lambda}\in \ri \partial_p h(G_y(\bar{x}))\quad \textrm{ and }\qquad \bar{w}\in \ri \partial_p f_v(\bar{x})\qquad\textrm{ hold}.$$ 
\item {\bf (identifiable manifold)} $f_v$ admits a $C^{\omega}$ identifiable manifold $\mathcal{M}$ at $\bar{x}$ for $\bar{w}$ and $h$ admits a $C^{\omega}$ identifiable manifold $\mathcal{K}$ at $G_{y}(\bar{x})$ for $\bar{\lambda}$. 
\item {\bf (nondegeneracy)} The constraint qualification (nondegeneracy condition)
\begin{align*}
N_{\mathcal{K}}(G_{y}(\bar{x}))\cap [\nabla G(\bar{x})^*]^{-1}N_{\mathcal{M}}(\bar{x})&=\{0\} \qquad \textrm{ holds}. \label{eqn:part_basic} 
\end{align*}
\item\label{it:smooth_dep} {\bf  (smooth dependence of critical triples)}  The mapping 
$$(\hat{v},\hat{y})\mapsto \Big\{(x,\lambda): \textrm{ the pair } (x,\lambda) \textrm{ is composite critical for }P(\hat{v},\hat{y})\Big\}$$ 
admits a single-valued analytic localization around $(v,y,\bar{x},\bar{\lambda})$.
\end{enumerate}
Moreover the following are equivalent.
\end{thm}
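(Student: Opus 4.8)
We set $g:=f_v+h\circ G_y$ and let $\mathcal{N}:=\mathcal{M}\cap G_y^{-1}(\mathcal{K})$ be the \emph{composite active manifold}; in the equivalence, $(i)$ asserts that $\bar{x}$ is a local minimizer of $P(v,y)$, $(ii)$ that $\bar{x}$ is a stable strong local minimizer of $P(v,y)$, $(iii)$ that $\inf_{w}d^2 g(\bar{x})(u\,|\,w)>0$ for every $0\neq u\in C_{g}(\bar{x},0)$, and $(iv)$ the same positivity for $g+\delta_{\mathcal{N}}$ in place of $g$ and $T_{\mathcal{N}}(\bar{x})$ in place of $C_{g}(\bar{x},0)$ (a statement of classical, smooth type). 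The plan is to reproduce the proof of Theorem~\ref{thm:final_uncon} with $\mathcal{N}$ playing the role of the active manifold there. We may fix the generic pair $(v,y)$ so that everything already proved in Theorem~\ref{thm:main} is in force at the composite critical point $\bar{x}$: the multiplier $\bar{\lambda}$ is unique, $\bar{w}\in\ri\,\partial_p f_v(\bar{x})$ and $\bar{\lambda}\in\ri\,\partial_p h(G_y(\bar{x}))$ with $\bar{w}=-\nabla G(\bar{x})^*\bar{\lambda}$, the manifolds $\mathcal{M}$ and $\mathcal{K}$ exist, and $N_{\mathcal{K}}(G_y(\bar{x}))\cap[\nabla G(\bar{x})^*]^{-1}N_{\mathcal{M}}(\bar{x})=\{0\}$. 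Adding the proximal minorants supplied by $\bar{w}\in\partial_p f_v(\bar{x})$ and by $\bar{\lambda}\in\partial_p h(G_y(\bar{x}))$ composed with $G_y$ gives $0=\bar{w}+\nabla G(\bar{x})^*\bar{\lambda}\in\partial_p g(\bar{x})$, so $\bar{x}$ is a critical point of $g$.

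First I would verify that $\mathcal{N}$ is a $C^{\omega}$ identifiable manifold of $g$ at $\bar{x}$ for $0$. The nondegeneracy condition is precisely transversality of $\mathcal{M}$ and $G_y^{-1}(\mathcal{K})$ at $\bar{x}$, so $\mathcal{N}$ is a $C^{\omega}$ manifold near $\bar{x}$ with $T_{\mathcal{N}}(\bar{x})=T_{\mathcal{M}}(\bar{x})\cap[\nabla G(\bar{x})]^{-1}T_{\mathcal{K}}(G_y(\bar{x}))$, and $g|_{\mathcal{N}}$ is $C^{\omega}$ because $f_v|_{\mathcal{M}}$ and $h|_{\mathcal{K}}$ are (cf.\ \cite{ident}). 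For finite identification, take $x_i\xrightarrow[g]{}\bar{x}$ and $w_i\to 0$ with $w_i\in\partial g(x_i)$: lower semicontinuity of $f_v$ and of $h\circ G_y$ forces $f_v(x_i)\to f_v(\bar{x})$ and $h(G_y(x_i))\to h(G_y(\bar{x}))$ separately; the outer chain rule (valid under \eqref{eqn:qual_cond1}) furnishes $\lambda_i\in\partial h(G_y(x_i))$ with $w_i-\nabla G(x_i)^*\lambda_i\in\partial f_v(x_i)$; the horizon condition \eqref{eqn:qual_cond1} keeps $\{\lambda_i\}$ bounded; and uniqueness of the multiplier guaranteed by \eqref{eqn:qual_cond2} forces $\lambda_i\to\bar{\lambda}$ and $w_i-\nabla G(x_i)^*\lambda_i\to\bar{w}$. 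Finite identification for $\mathcal{K}$ then puts $G_y(x_i)\in\mathcal{K}$ eventually, and finite identification for $\mathcal{M}$ (applied to $x_i\xrightarrow[f_v]{}\bar{x}$ and the sequence tending to $\bar{w}$) puts $x_i\in\mathcal{M}$ eventually; hence $x_i\in\mathcal{N}$ eventually. In particular, by Theorem~\ref{thm:critman2}, $C_{g}(\bar{x},0)=T_{\mathcal{N}}(\bar{x})$.

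With $\mathcal{N}$ in hand, the equivalence is assembled exactly as in Theorem~\ref{thm:final_uncon} applied to $g$ and $\mathcal{N}$. Restricting the single-valued analytic localization of the critical-triple map in item~\ref{it:smooth_dep} to the slice $\hat{y}=y$ and discarding the $\lambda$-coordinate (legitimate because, under \eqref{eqn:qual_cond2} and strict complementarity, the unique multiplier depends continuously on the base point near $\bar{x}$) shows that the tilted map $w\mapsto(\partial g)^{-1}(w)$ is single-valued and analytic near $0$; that is, $\partial g$ is strongly regular at $(\bar{x},0)$, so by \cite[Proposition~3.1, Corollary~3.2]{tilt} a local minimizer of $g$ is automatically a stable strong local minimizer, giving $(i)\Leftrightarrow(ii)$ (the reverse being trivial), and in particular $(ii)$ holds iff $\bar{x}$ is a strong local minimizer of $g$. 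By Theorem~\ref{thm:basic_prop_ident}, the latter holds iff $\bar{x}$ is a strong local minimizer of $g+\delta_{\mathcal{N}}$, i.e.\ of the $C^{\omega}$ function $g|_{\mathcal{N}}$ at its critical point $\bar{x}$, which classically is equivalent to positive definiteness of its Riemannian Hessian on $T_{\mathcal{N}}(\bar{x})$ -- precisely $(iv)$; this closes $(ii)\Leftrightarrow(iv)$. Finally $(iii)\Rightarrow(iv)$ is immediate since $d^2(g+\delta_{\mathcal{N}})(\bar{x})(u\,|\,\cdot)\geq d^2 g(\bar{x})(u\,|\,\cdot)$ pointwise and $C_{g}(\bar{x},0)=T_{\mathcal{N}}(\bar{x})$, while $(iv)\Rightarrow(iii)$ is exactly Corollary~\ref{cor:sec_order} applied to $g$ and the $C^{3}$ (indeed $C^{\omega}$) identifiable manifold $\mathcal{N}$.

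I expect the crux to be the second paragraph: producing bounded, convergent multipliers along the identification sequences, which forces one to know that the outer chain rule $\partial g(x)\subseteq\partial f_v(x)+\nabla G(x)^*\partial h(G_y(x))$ and the multiplier uniqueness hold not merely at $\bar{x}$ but at all nearby $x$ -- this is exactly why the \emph{stability} of the qualification conditions \eqref{eqn:qual_cond1} and \eqref{eqn:qual_cond2} (already established in the theorem) is essential here. A minor secondary point, flagged above, is the passage from single-valuedness of the $(x,\lambda)$-valued localization of item~\ref{it:smooth_dep} to single-valuedness of the $x$-valued tilted subdifferential, which again rests on \eqref{eqn:qual_cond2} together with strict complementarity.
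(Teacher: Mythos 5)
The decisive problem is that your argument assumes nearly everything the theorem asserts. The statement to be proved includes the generic validity of the qualification conditions \eqref{eqn:qual_cond1}--\eqref{eqn:qual_cond2}, the uniform bound $N$ on the number of composite critical points, uniqueness of the Lagrange multiplier, and items (1)--(5): prox-regularity, strict complementarity, existence of the identifiable manifolds $\mathcal{M}$ and $\mathcal{K}$, nondegeneracy, and smooth dependence of critical triples. You declare at the outset that ``everything already proved in Theorem~\ref{thm:main} is in force'' and then treat only the closing equivalence; that is circular as a proof of the theorem. The paper's proof spends essentially all of its effort on precisely those assertions: it stratifies $\dom f$ and $\dom h$ so that horizon and limiting subdifferentials lie in stratum normal spaces and invokes the classical parametric Sard/transversality theorem to obtain \eqref{eqn:qual_cond1}--\eqref{eqn:qual_cond2} for almost every $y$; it observes that the graph of the criticality mapping $\mathcal{I}(x,\lambda)=(\nabla G(x)^*\lambda,-G(x))+\big(\partial f\times(\partial h)^{-1}\big)(x,\lambda)$ is diffeomorphic to $\gph \partial f\times\gph(\partial h)^{-1}$, hence has dimension $n+m$, so that Theorem~\ref{thm:sard} yields generic finiteness (with \eqref{eqn:qual_cond2} giving multiplier uniqueness); it proves strict complementarity by showing $\gph\mathcal{I}_p$ is dense in $\gph\mathcal{I}$ with both inverses finite-valued almost everywhere; and it constructs $\mathcal{M}$ and $\mathcal{K}$ as constant-rank images of the analytic selections of $\mathcal{I}_p^{-1}$, verifying identifiability directly and reading off nondegeneracy and item (5) from this construction. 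None of this machinery appears in your proposal, so the bulk of the theorem is left unproved.

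On the part you do address, your route differs from the paper's: the paper disposes of the equivalence in one line by applying Theorem~\ref{thm:final_uncon} to the semi-algebraic function $f+h\circ G_y$ together with the calculus of Theorem~\ref{thm:calc}, whereas you build the composite manifold $\mathcal{N}=\mathcal{M}\cap G_y^{-1}(\mathcal{K})$ and verify its identifiability for $g$; your identification argument (boundedness of multipliers via \eqref{eqn:qual_cond1}, convergence via uniqueness from \eqref{eqn:qual_cond2}) is sound and is a reasonable way to make the paper's ``immediate'' explicit. But three details need repair. Since $G$ is only $C^2$, the manifold $\mathcal{N}$ is a priori only $C^2$, so calling it $C^{\omega}$ and invoking Corollary~\ref{cor:sec_order} (which requires $C^3$) is unjustified as stated; the paper avoids this because the identifiable manifold of $g$ produced by Corollary~\ref{cor:basic2}/Theorem~\ref{thm:final_uncon} is analytic by construction. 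Your passage from the $(x,\lambda)$-localization of item~\ref{it:smooth_dep} to strong regularity of $\partial g$ at $(\bar{x},0)$ is not controlled for nearby points $x$ with subgradients near $0$ but with $g(x)$ far from $g(\bar{x})$, or with multipliers far from $\bar{\lambda}$ (outer semicontinuity of $\partial h$ requires $h$-attentive convergence), so this ``minor secondary point'' needs an actual argument. Finally, you never invoke Theorem~\ref{thm:calc}, which is what converts your $g$-formulations of (iii)--(iv) into the theorem's stated conditions involving $d^2f$, $d^2h$, $C_f(\bar{x},\bar{w})\cap[\nabla G(\bar{x})]^{-1}C_h(G_y(\bar{x}),\bar{\lambda})$ and $T_{\mathcal{M}}(\bar{x})\cap[\nabla G(\bar{x})]^{-1}T_{\mathcal{K}}(G_y(\bar{x}))$.
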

\begin{enumerate}[$(i)$]
\item $\bar{x}$ is a local minimizer of $P(v,y)$.
\item $\bar{x}$ is a strong local minimizer of $P(v,y)$.
\item The inequality $$d^2 f(\bar{x})(u|z)+d^2 h(G_y(\bar{x}))\Big(\nabla G(\bar{x})u\,\Big|\, \nabla^2 G(\bar{x})[u,u]+\nabla G(\bar{x})z\Big) >0$$ 
holds for all  nonzero $u\in C_f(\bar{x},\bar{w})\cap[\nabla G(\bar{x})]^{-1}C_h(G_{y}(\bar{x}),\bar{\lambda})$ and all $z\in\R^n$.
\item The inequality
$$d^2 (f+\delta_{\mathcal{M}})(\bar{x})(u|z)+d^2 (h+\delta_{\mathcal{K}})\big(G_y(\bar{x})\big)\Big(\nabla G(\bar{x})u\,\Big|\, \nabla^2 G(\bar{x})[u,u]+\nabla G(\bar{x})z\Big) >0$$ 
holds for all  nonzero $u\in T_{\mathcal{M}}(\bar{x})\cap [\nabla G(\bar{x})]^{-1}T_{\mathcal{K}}(G_{y}(\bar{x}))$ and all $z\in\R^n$.
\end{enumerate}
\begin{proof}
First applying \cite[Lemma 8]{Lewis-Clarke} and Theorem~\ref{thm:strat_exist}, we obtain a $C^{\omega}$ stratification $\{A_i\}$ of $\dom f$ and a $C^{\omega}$ stratification $\{B_j\}$ of $\dom h$ having the property that $f$ is $C^{\omega}$-smooth on each $A_i$ and $h$ is $C^{\omega}$-smooth on each $B_j$, and so that 
$$\partial^{\infty}f(x)\cup\para \partial f(x)\subset N_{A_i}(x) \quad \textrm{ and } \quad \partial^{\infty}h(z) \cup\para \partial h(z)\subset N_{B_j}(z)$$
for any $x\in A_i$ and $z\in B_j$.
For fixed indices $i$ and $j$, the standard Sard's theorem implies that for almost every $y\in\R^m$, the restriction of $G_{y}$ to $A_i$ is transverse to $B_j$, that is for any $x\in A_i$ with $G_{y}(x)\in B_j$ we have 
$$N_{B_j}(G_{y}(x))\cap [\nabla G(x)^*]^{-1}N_{A_i}(x)=\{0\}.$$
Since there are finitely many indices $i$ and $j$, the claimed qualification conditions \eqref{eqn:qual_cond1} and \eqref{eqn:qual_cond2} follow.

Define now the set-valued mapping $\mathcal{I}\colon\R^n\times\R^m\rightrightarrows\R^n\times\R^m$ by
$$\mathcal{I}(x,\lambda)=\begin{bmatrix}
\nabla G(x)^*\lambda \\
-G(x)
\end{bmatrix}+ \Big(\partial f\times (\partial h)^{-1}\Big)(x,\lambda).
$$
Observe $(v,y)\in \mathcal{I}(x,\lambda)$ if and only if $(x,\lambda)$ is a composite critical pair for $P(v,y)$.
It is easy to see, in turn, that $\gph \mathcal{I}$ is $C^1$ diffeomorphic to $\gph \partial f\times \gph(\partial h)^{-1}$, and hence by \cite[Theorem~3.7]{dim} has dimension $n+m$. 
Applying the semi-algebraic Sard's theorem for weakly critical values (Theorem~\ref{thm:sard}), we deduce that there exists an integer $N >0$ such that for generic parameters $(v,y)$, the problem $P(v,y)$ has at most $N$ composite critical points $x$. 
Moreover for any composite critical point $\bar{x}$ of $P(v,y)$, the Lagrange multiplier vector $\bar{\lambda}$ is unique  
 for almost every $(v,y)$ by inclusion \eqref{eqn:qual_cond2}.

We now prove the strict complementarity claim. To this end, define the mapping
$$\mathcal{I}_p(x,\lambda)=\begin{bmatrix}
\nabla G(x)^*\lambda \\
-G(x)
\end{bmatrix}+ \Big(\ri \partial_p f\times \big(\ri \partial_p h\big)^{-1}\Big)(x,\lambda).
$$
Clearly the inclusion $\gph \mathcal{I}_p\subset \gph \mathcal{I}$ holds, and by what we have already proved both mappings $\mathcal{I}_p$ and $\mathcal{I}$ are finite valued almost everywhere. We now claim that $\gph \mathcal{I}_p$ is dense in $\gph \mathcal{I}$. To see this, fix a pair $(v,y)\in \mathcal{I}(x,\lambda)$. Equivalently we may write $$0= w + \nabla G(x)^*\lambda, \quad \textrm{ for some } w\in\partial f_v(x) \textrm{ and } \lambda\in \partial h(G_y(x)).$$ By definition of the limiting subdifferential, there are sequences $(x_k,u_k)\to (x,w+v)$ in $\gph (\ri\partial_p f)$ and $(z_k,\lambda_k)\to (G_y(x),\lambda)$ in $\gph (\ri\partial_p h)$. 
Defining $\gamma_k:=(u_k-(w+v))+(\nabla G(x_k)^*\lambda_k-\nabla G(x)^*\lambda)$ and $\alpha_k:=z_k-G_y(x_k)$ it is easy to verify the inclusion
$$(v+\gamma_k, y+\alpha_k)\in \mathcal{I}_p(x_k,\lambda_k).$$ Hence $\gph \mathcal{I}_p$ is dense in $\gph \mathcal{I}$. Since both $\mathcal{I}^{-1}$ and $\mathcal{I}^{-1}_p$ are semi-algebraic and finite almost everywhere, it follows immediately that $\mathcal{I}^{-1}$ and $\mathcal{I}^{-1}_p$ agree almost everywhere on $\R^n \times \R^m$. This establishes the strict complementarity claim \ref{it:strict_comp}.

Moving on to existence of identifiable manifolds, 
applying Theorem~\ref{thm:sard} to the mapping $\mathcal{I}_p$, we deduce that there exists an integer $N$, a finite collection of open semi-algebraic sets $\{U_i\}^N_{i=0}$ in $\R^n\times \R^m$, and analytic semi-algebraic single-valued mappings  $$E_i^j\colon U_i\to\R^n\times\R^m \qquad\textrm{ for } i=0,\ldots,N \textrm{ and } j=1,\ldots, i$$ satisfying:
\begin{enumerate}
\item $\bigcup_i U_i$ is dense in $\R^n\times \R^m$.
\item For any $(v,y)\in U_i$, the image $\mathcal{I}^{-1}_p(v,y)$ has cardinality $i$.
\item We have the representation
$$\mathcal{I}^{-1}_p(v,y)  =  \{E_i^j(v,y) : j=1,2,\ldots,i\} \qquad\textrm{ whenever }(v,y)\in U_i.$$
\end{enumerate}

Let  $X^j_i(v,y)$ denote the composition of $E^j_i$ with the projection $(x,\lambda)\mapsto x$ and let $F^j_i(v,y):=G(X^j_i(v,y))+y$. 
Applying Theorem~\ref{thm:strat_exist} to each $X_i^j$ and $F^j_i$, we may find a dense open subset $\widehat{U}_i$ of $U_i$ so that 
\begin{itemize}
\item $f$ is analytic on $X^j_i(\widehat{U}_i)$ and $h$ is analytic on $F^j_i(\widehat{U}_i)$.
\item $X_i^j$ and $F^j_{i}$ are analytic and have constant rank on $\widehat{U}_i$
\end{itemize}

Let $(x,\lambda)$ be such that $X_i^j(v,y)=x$ and so that $(x,\lambda)$ is a composite critical pair for $P(v,y)$. Define also $w:=-\nabla G(x)^*\lambda$. Then due to the constant rank, there exists a neighborhood $W$ of $(v,y)$ so that $X_i^j(W)$ and $F_i^j(W)$ are analytic manifolds. 
We claim that $X_i^j(W)$ is an identifiable manifold relative to $f_v$ at $x$ for $w$ and that $F_i^j(W)$ is an identifiable manifold relative to $h$ at $G_y(x)$ for $\lambda$. 

To see this, consider sequences $(x_k,w_k)\to (x,w)$ in $\gph \partial f_v$ and $(z_k,\lambda_k)\to (G_y(x),\lambda)$ in $\gph \partial h$. 
Defining $\gamma_k:=(w_k-w)+(\nabla G(x_k)^*\lambda_i-\nabla G(x)^*\lambda)$ and $\alpha_k:=z_k-G_y(x_k)$ we have the inclusion
$$(v+\gamma_k, y+\alpha_k)\in \mathcal{I}_p(x_k,\lambda_k).$$ 
Hence for all large indices $k$ equality
$$E^{i}_j(v+\gamma_k, y+\alpha_k)=(x_k,\lambda_k)$$
holds.
We deduce for sufficiently large $k$ the inclusion $x_k\in X_i^j(W)$. Hence $X_i^j(W)$ is indeed identifiable relative to $f_v$ at $x$ for $w$. Moreover, we have $z_k=F^j_{i}(v+\gamma_k, y+\alpha_k)\in F^j_i(W)$ for all large $k$. We conclude that $F_i^j(W)$ is  identifiable relative to $h$ at $G_y(x)$ for $\lambda$, as claimed.
The nondegeneracy claim is a simple consequence of the construction and the classical Sard's theorem. Finally the four equivalent properties are immediate from 
Theorems~\ref{thm:final_uncon} and \ref{thm:calc}.

\end{proof}

Note that Theorem~\ref{thm:main}  with $h=0$ and $G=I$ reduces to Theorem~\ref{thm:final_uncon}.
It is interesting to reinterpret Theorem~\ref{thm:main} in the convex setting. To this end, recall that for any convex function $f\colon\R^n\to\overline{\R}$, the {\em Fenchel conjugate} $f^*\colon\R^n\to\overline{\R}$ is defined by
$$f^*(u):=\sup_x \,\{\langle u,x \rangle -f(x)\},$$
and the relationship $\partial f^*=(\partial f)^{-1}$ holds.

Fix now a linear mapping
$A\colon\R^n\to\R^m$ and lsc convex functions $f\colon\R^n\to\overline{\R}$ and $h\colon\R^m\to\overline{\R}$. Within the Fenchel framework, we consider the family of primal optimization problems given by
$$\inf_x\, f(x)+h(Ax+y)-\langle v,x\rangle,$$
and associate with them the dual problems
$$\sup_u\, -h^*(u)-f^*(v-A^*u)+\langle y,u\rangle.$$
Then the primal problem is feasible whenever $y$ lies in the set 
$$Y:=\dom h- A(\dom f),$$
and the dual is feasible whenever $v$ lies in 
$$V:=\dom f^*+ A^*(\dom h^*).$$
Standard Fenchel duality then asserts that for $y$ in the interior of $Y$, the primal and dual optimal values are equal and the dual is attained when finite. 
Assuming in addition that $v$ lies in the interior of $V$, optimality is characterized by the generalized equation
$$\begin{bmatrix}
v \\
y
\end{bmatrix}\in\begin{bmatrix}
A^*u \\
-Ax
\end{bmatrix}+ \Big(\partial f\times \partial h^*\Big)(x,u).
$$
This is precisely an instance of the variational inequality \eqref{eqn:var_ineq} in a convex setting. Assuming now that $f$ and $h$ are semi-algebraic, and applying Theorem~\ref{thm:main}, we deduce that for generic parameters $(v,y)$, if the primal and dual problems are feasible then the interiority conditions hold, and both the primal and the dual admit at most one minimizer. Moreover for any such minimizers $x$ and $u$, strict complementarity holds for the primal and the dual, identifiable manifolds exist for both problems, both objectives grow quadratically around $x$ and $u$, respectively,  and the minimizers $x$ and $u$ jointly vary analytically with the parameters $(v,y)$.

\bibliography{dim_graph}{}

\begin{thebibliography}{10}

\bibitem{Al-Khayyal-Kyparisis91}
F.~Al-Khayyal and J.~Kyparisis.
\newblock Finite convergence of algorithms for nonlinear programs and
  variational inequalities.
\newblock {\em J. Optim. Theory Appl.}, 70(2):319--332, 1991.

\bibitem{aliz}
F.~Alizadeh, J.-P.A. Haeberly, and M.L. Overton.
\newblock Complementarity and nondegeneracy in semidefinite programming.
\newblock {\em Math. Programming}, 77(2, Ser. B):111--128, 1997.
\newblock Semidefinite programming.

\bibitem{gen}
J.~Bolte, A.~Daniilidis, and A.S. Lewis.
\newblock Generic optimality conditions for semialgebraic convex programs.
\newblock {\em Math. Oper. Res.}, 36:55--70, 2011.

\bibitem{Lewis-Clarke}
J.~Bolte, A.~Daniilidis, A.S. Lewis, and M.~Shiota.
\newblock {C}larke subgradients of stratifiable functions.
\newblock {\em SIAM J. Optimization}, 18(2):556--572, 2007.

\bibitem{Bon_Shap}
J.F. Bonnans and A.~Shapiro.
\newblock {\em Perturbation Analysis of Optimization Problems}.
\newblock Springer, New York, 2000.

\bibitem{Burke90}
J.V. Burke.
\newblock On the identification of active constraints. {II}.\ {T}he nonconvex
  case.
\newblock {\em SIAM J. Numer. Anal.}, 27(4):1081--1103, 1990.

\bibitem{Burke-More88}
J.V. Burke and J.J. Mor{\'e}.
\newblock On the identification of active constraints.
\newblock {\em SIAM J. Numer. Anal.}, 25(5):1197--1211, 1988.

\bibitem{Calamai-More87}
P.H. Calamai and J.J. Mor{\'e}.
\newblock Projected gradient methods for linearly constrained problems.
\newblock {\em Math. Program.}, 39(1):93--116, 1987.

\bibitem{Coste-semi}
M.~Coste.
\newblock {\em An {I}ntroduction to {S}emialgebraic {G}eometry}.
\newblock RAAG Notes, 78 pages, Institut de Recherche Math\'{e}matiques de
  Rennes, October 2002.

\bibitem{crit_semi}
D.~Drusvyatskiy and A.D. Ioffe.
\newblock Quadratic growth and critical point stability of semi-algebraic
  functions.
\newblock {\em To appear in Math. Program., arXiv:1309.1446}, 2014.

\bibitem{loc}
D.~Drusvyatskiy, A.D. Ioffe, and A.S. Lewis.
\newblock The dimension of semialgebraic subdifferential graphs.
\newblock {\em Nonlinear Anal.}, 75(3):1231--1245, 2012.

\bibitem{dir_lip}
D.~Drusvyatskiy, A.D. Ioffe, and A.S. Lewis.
\newblock Clarke subgradients for directionally lipschitzian stratifiable
  functions.
\newblock {\em Math. Oper. Res.}, 40(2):328--349, 2015.

\bibitem{gen_nondeg}
D.~Drusvyatskiy and A.S. Lewis.
\newblock Generic nondegeneracy in convex optimization.
\newblock {\em Proc. Amer. Math. Soc.}, 139(7):2519--2527, 2011.

\bibitem{dim}
D.~Drusvyatskiy and A.S. Lewis.
\newblock Semi-algebraic functions have small subdifferentials.
\newblock {\em Math. Program.}, 140(1, Ser. B):5--29, 2013.

\bibitem{tilt}
D.~Drusvyatskiy and A.S. Lewis.
\newblock Tilt stability, uniform quadratic growth, and strong metric
  regularity of the subdifferential.
\newblock {\em SIAM J. Optim.}, 23(1):256--267, 2013.

\bibitem{ident}
D.~Drusvyatskiy and A.S. Lewis.
\newblock Optimality, identifiability, and sensitivity.
\newblock {\em Math. Program.}, 147(1-2, Ser. A):467--498, 2014.

\bibitem{tilt_other}
D.~Drusvyatskiy, B.S. Mordukhovich, and T.T.A. Nghia.
\newblock Second-order growth, tilt stability, and metric regularity of the
  subdifferential.
\newblock {\em J. Convex Anal.}, 21(4):1165--1192, 2014.

\bibitem{Dunn87}
J.C. Dunn.
\newblock On the convergence of projected gradient processes to singular
  critical points.
\newblock {\em J. Optim. Theory Appl.}, 55(2):203--216, 1987.

\bibitem{fac_pang}
F.~Facchinei and J.~Pang.
\newblock {\em Finite dimensional variational inequalities and complementarity
  problems}.
\newblock Springer Series in Operations Research, Springer-Verlag, New York,
  2003.

\bibitem{fin_2}
F.~Facchinei and J.-S. Pang.
\newblock {\em Finite-dimensional variational inequalities and complementarity
  problems. {V}ol. {II}}.
\newblock Springer Series in Operations Research. Springer-Verlag, New York,
  2003.

\bibitem{Ferris91}
M.C. Ferris.
\newblock Finite termination of the proximal point algorithm.
\newblock {\em Math. Program. Ser. A}, 50(3):359--366, 1991.

\bibitem{Flam92}
S.D. Fl{\aa}m.
\newblock On finite convergence and constraint identification of subgradient
  projection methods.
\newblock {\em Math. Program.}, 57:427--437, 1992.

\bibitem{Hare}
W.L. Hare and A.S. Lewis.
\newblock Identifying active constraints via partial smoothness and
  prox-regularity.
\newblock {\em J. Convex Anal.}, 11(2):251--266, 2004.

\bibitem{ident_active}
W.L. Hare and A.S. Lewis.
\newblock Identifying active manifolds.
\newblock {\em Algorithmic Oper. Res.}, 2(2):75--82, 2007.

\bibitem{ioffe_survey}
A.D. Ioffe.
\newblock Metric regularity and subdifferential calculus.
\newblock {\em Uspekhi Mat. Nauk}, 55(3(333)):103--162, 2000.

\bibitem{IS}
A.D. Ioffe.
\newblock Critical values of set-valued maps with stratifiable graphs.
  extensions of {S}ard and {S}male-{S}ard theorems.
\newblock {\em Proc. Amer. Math. Soc.}, 136:3111 -- 3119, 2008.

\bibitem{tame_opt}
A.D. Ioffe.
\newblock An invitation to tame optimization.
\newblock {\em SIAM Journal on Optimization}, 19(4):1894--1917, 2009.

\bibitem{lee}
J.M. Lee.
\newblock {\em Introduction to Smooth Manifolds}.
\newblock Springer, New York, 2003.

\bibitem{dual_av}
S.~Lee and S.J. Wright.
\newblock Manifold identification in dual averaging for regularized stochastic
  online learning.
\newblock {\em J. Mach. Learn. Res.}, 13:1705--1744, 2012.

\bibitem{Lewis-active}
A.S. Lewis.
\newblock Active sets, nonsmoothness, and sensitivity.
\newblock {\em SIAM J. Optim.}, 13:702--725, 2002.

\bibitem{adrian_ICM}
A.S. Lewis.
\newblock Nonsmooth optimization: conditioning, convergence and semi-algebraic
  models.
\newblock In {\em Proceedings of the International Congress of Mathematicians,
  Seoul}, pages 871--895, 2014.

\bibitem{shanshan}
A.S. Lewis and S.~Zhang.
\newblock Partial smoothness, tilt stability, and generalized {H}essians.
\newblock {\em SIAM J. Optim.}, 23(1):74--94, 2013.

\bibitem{MC05}
R.~Mifflin and C.~Sagastiz{\'a}bal.
\newblock A {$\cal{VU}$}-algorithm for convex minimization.
\newblock {\em Math. Program.}, 104(2-3, Ser. B):583--608, 2005.

\bibitem{minty}
G.J. Minty.
\newblock Monotone (nonlinear) operators in {H}ilbert space.
\newblock {\em Duke Mathematical Journal}, 29:341--346, 1962.

\bibitem{pat_tun}
G.~Pataki and L.~Tun{\c{c}}el.
\newblock On the generic properties of convex optimization problems in conic
  form.
\newblock {\em Math. Program.}, 89(3, Ser. A):449--457, 2001.

\bibitem{prox_reg}
R.A. Poliquin and R.T. Rockafellar.
\newblock Prox-regular functions in variational analysis.
\newblock {\em Trans. Amer. Math. Soc.}, 348:1805--1838, 1996.

\bibitem{newt_mon}
S.M. Robinson.
\newblock A point-of-attraction result for {N}ewton's method with point-based
  approximations.
\newblock {\em Optimization}, 60(1-2):89--99, 2011.

\bibitem{Eq_mon}
S.M. Robinson.
\newblock Equations on monotone graphs.
\newblock {\em Math. Program.}, 141(1-2, Ser. A):49--101, 2013.

\bibitem{lagr}
R.T. Rockafellar.
\newblock Lagrange multipliers and optimality.
\newblock {\em SIAM Rev.}, 35(2):183--238, 1993.

\bibitem{imp}
R.T. Rockafellar and A.L. Dontchev.
\newblock {\em Implicit functions and solution mappings}.
\newblock Monographs in Mathematics, Springer-Verlag, 2009.

\bibitem{VA}
R.T. Rockafellar and R.J-B. Wets.
\newblock {\em Variational {A}nalysis}.
\newblock Grundlehren der mathematischen Wissenschaften, Vol 317, Springer,
  Berlin, 1998.

\bibitem{saigal}
R.~Saigal and C.~Simon.
\newblock Generic properties of the complementarity problem.
\newblock {\em Math. Programming}, 4:324--335, 1973.

\bibitem{shap_non}
A.~Shapiro.
\newblock First and second order analysis of nonlinear semidefinite programs.
\newblock {\em Math. Programming}, 77(2, Ser. B):301--320, 1997.
\newblock Semidefinite programming.

\bibitem{spin}
J.E. Spingarn.
\newblock On optimality conditions for structured families of nonlinear
  programming problems.
\newblock {\em Math. Programming}, 22(1):82--92, 1982.

\bibitem{gen_spin}
J.E. Spingarn and R.T. Rockafellar.
\newblock The generic nature of optimality conditions in nonlinear programming.
\newblock {\em Mathematics of Operations Research}, 4(4):pp. 425--430, 1979.

\bibitem{sing}
L.~Tak{{\'a}}cs.
\newblock An increasing continuous singular function.
\newblock {\em Amer. Math. Monthly}, 85(1):35--37, 1978.

\bibitem{DM}
L.~van~den Dries and C.~Miller.
\newblock Geometric categories and o-minimal structures.
\newblock {\em Duke Mathematical Journal}, 84:497--540, 1996.

\bibitem{Wright}
S.J. Wright.
\newblock Identifiable surfaces in constrained optimization.
\newblock {\em SIAM J. Control Optim.}, 31:1063--1079, July 1993.

\end{thebibliography}
\bibliographystyle{plain}

\end{document}